\documentclass[12pt,twoside,a4paper]{article}
\usepackage{amssymb,graphicx,amsmath,amsthm, bm,mathtools,longtable,array,booktabs}
\usepackage[usenames,dvipsnames]{xcolor}
\usepackage[colorlinks = true,
            linkcolor = blue,
            urlcolor  = blue,
            citecolor = blue,
            anchorcolor = blue]{hyperref}
 
\usepackage{enumitem}  
\usepackage{wrapfig}
\usepackage{cite} 
\usepackage[T1]{fontenc}  
 \usepackage[normalem]{ulem}
 
\usepackage{diagbox}

\numberwithin{equation}{section}

\newcommand{\changeFirstReferee}[1]{{\color{black}#1}}
\newcommand{\changeSecondReferee}[1]{{\color{black}#1}}
\newcommand{\changeFirstRefereeG}[1]{{\color{black}#1}}
\newcommand{\changeSecondRefereeG}[1]{{\color{black}#1}}
\newcommand{\changeV}[1]{{\color{Maroon}#1}}


\makeatletter
\def\twodigits#1{\expandafter\@twodigits\csname c@#1\endcsname}
\def\@twodigits#1{%
  \ifnum#1<10 0\fi
  \number#1}
\makeatother
\AddEnumerateCounter{\twodigits}{\@twodigits}{10}

\usepackage{amsmath}

\usepackage[a4paper]{geometry}
 \geometry{
 a4paper,
 total={210mm,297mm},
 left=20mm,
 right=20mm,
 top=25mm,
 bottom=35mm, 
 } 

 
\newcommand{\KSigmaD}{\mathrm{K}_{D \Sigma } }

\newcommand{\KSigmaOmegaTwo}{\mathrm{K}_{\Omega_{2} \Sigma} }
\newcommand{\KSigmaGamma}{\mathrm{K}_{\Gamma\Sigma} }
\newcommand{\KGammaSigma}{\mathrm{K}_{\Sigma\Gamma} }
\newcommand{\KGammaOmegaOneC}{\mathrm{K}_{\Omega_1^{\rm c}\Gamma} }

\newcommand{\RGammaSigma}{\mathrm{R}_{\Sigma\Gamma}}

\newcommand{\x}{\boldsymbol{x}}
\newcommand{\dd}{\boldsymbol{d}}
\newcommand{\z}{\boldsymbol{z}}
\newcommand{\sphere}{\mathbb{S}^{1}}
\newcommand{\ddh}{\widehat{\dd}}
\newcommand{\xh}{\widehat{\x}}
\newcommand{\zh}{\widehat{\z}}
 
\theoremstyle{remark}
\newtheorem{remark}{Remark}[section]
\theoremstyle{plain}
\newtheorem{lemma}[remark]{Lemma}
\newtheorem{theorem}[remark]{Theorem}
\newtheorem{proposition}[remark]{Proposition}
\newtheorem{corollary}[remark]{Corollary}

\title{Analysis and application of an overlapped FEM-BEM  for wave propagation  in unbounded and heterogeneous media}
\author{V. Dom\'{\i}nguez \thanks{Department of  Estad\'{\i}stica, Inform\'atica y Matem\'aticas, Universidad P\'{u}blica de Navarra, Tudela, Spain/ Institute for Advanced Materials (INAMAT), Pamplona, Spain. Email: victor.dominguez@unavarra.es}
\and M. Ganesh \thanks{Department Applied Mathematics and Statistics Department, Colorado School of Mines, Golden, CO, USA. Email: mganesh@mines.edu}
}
\date{\today}

\DeclareMathOperator{\spann}{span}

\begin{document}
\maketitle 
\begin{center}
\sl Dedicated to the memory of Francisco-Javier ``Pancho'' Sayas (1968-2019).
\end{center}

\begin{abstract}
An overlapped continuous model framework, for the Helmholtz wave propagation problem in unbounded regions  comprising  bounded heterogeneous
media, was recently introduced and analyzed by the authors  ({\tt J. Comput. Phys., {\bf 403},  109052, 2020}). The continuous Helmholtz system incorporates a radiation condition (RC) and our equivalent  hybrid framework facilitates  application of widely used 
finite element methods (FEM)  and boundary element methods (BEM), and the resulting discrete systems retain the RC exactly. The  FEM and BEM discretizations, respectively,  applied to  the designed interior heterogeneous and exterior homogeneous media
Helmholtz systems include the FEM and BEM solutions matching in artificial interface domains, and allow for computations of the exact ansatz based far-fields. In this article  we present rigorous numerical analysis of  a discrete two-dimensional FEM-BEM overlapped coupling implementation of the algorithm. We also demonstrate the efficiency of our discrete FEM-BEM framework  and  analysis using numerical experiments, including applications  to non-convex heterogeneous multiple particle Janus  configurations.  
Simulations of  the far-field induced differential scattering cross sections (DSCS) of heterogeneous  configurations and 
orientation-averaged (OA) counterparts  are important for several applications, including inverse wave problems. Our robust FEM-BEM framework \changeSecondReferee{facilitates} computations of such quantities of interest, without boundedness or  homogeneity or shape restrictions on the wave propagation model.
\end{abstract}

\vspace{0.4in}
\noindent{{\bf AMS subject classifications:} 65N30, 65N38, 65F10, 35J05}\\

\vspace{0.4in}
\noindent{\bf Keywords:} Helmholtz, Heterogeneous,  Unbounded, Wave Propagation, Finite Element Methods, 
Integral Equations, Nystr\"om Boundary Element Methods, Janus Configurations

\newpage
\section{Introduction}\label{sec:intro}
Simulation of  scattered acoustic and electromagnetic fields, and hence understanding the impact of refractive indices of  wave propagation media,
are crucial for a large class of applications~\cite{colton:inverse,Ihlenburg:1998,Ned:2001,Jan_sph_20_book}. The term 
{\em Janus particles}  was mentioned in a Nobel Prize lecture~\cite{jan_lect} about three decades ago, and since then additional  interests include understanding the effect of a class of
piecewise-continuous heterogeneous  refractive indices induced Janus configurations.  A simple Janus particle 
is designed  by combining two distinct homogeneous refractive indices, and  Janus configurations in general  may comprise multiple particles or complex structures with 
heterogeneous material properties.  
The example configuration $\Omega_0$, of the type  illustrated in Figure~\ref{fig:01},  has been investigated  for synthesis  and applications, see
for example~\cite{jan_appl_1, jan_appl_2, jan_appl_3, jan_appl_4}.
\begin{figure}[!ht]
 \centerline{\includegraphics[width=0.56\textwidth]{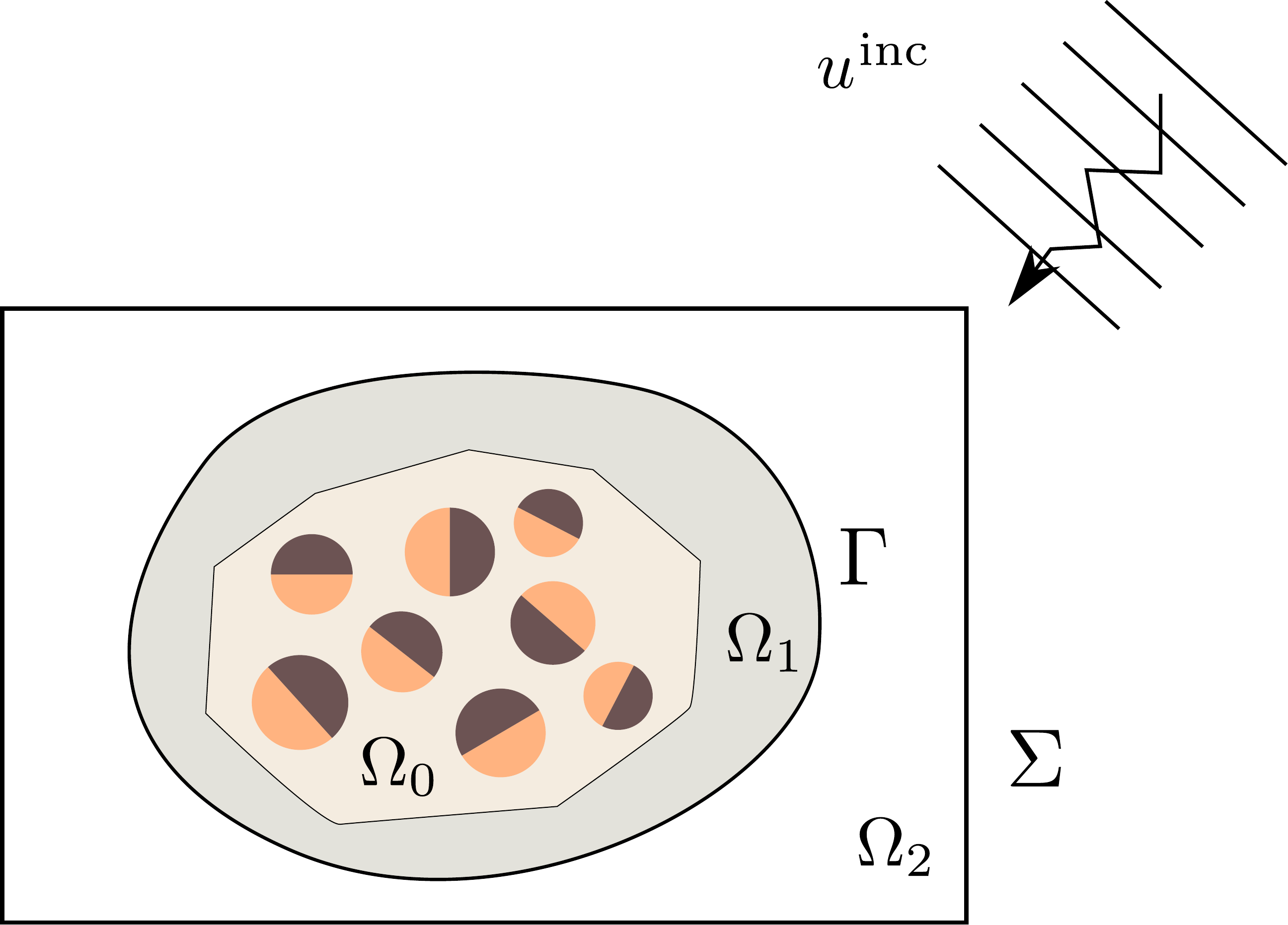}}
 \caption{\label{fig:01} {\em An incident wave $u^{\rm inc}$ impinging on a given Janus-type  heterogeneous multiple-particle configuration ($\Omega_0$)  in $\mathbb{R}^2$.
 Our framework introduces  two artificial boundaries $\Sigma$ and $\Gamma$, and hence two overlapped computational regions: (i) a bounded FEM-domain $\Omega_2$ (with  boundary $\Sigma$)
 and (ii) an unbounded   BEM-region $\mathbb{R}^2 \setminus \overline{\Omega}_1$ (exterior to the smooth  interface $\Gamma$).
The FEM and BEM solutions are constrained  to match in the
overlapped region ${\Omega_{12}:=}(\mathbb{R}^2 \setminus \overline{\Omega}_1)\cap \Omega_2$.}}
\end{figure}

Developing numerical methods to understand scattering effects by Janus configurations (even with simple structures) has been 
an active area of research, see for example the  Janus spherical acoustic 
configuration  effect research~\cite{Jan_sph_20_book, Jan_sph_20_pap} published in 2020 and references therein. 
Motivated by details in the book~\cite{Jan_sph_20_book}, authors of~\cite{Jan_sph_20_pap} numerically investigated scattering effects of homogeneous spheres 
with two  standard (sound-soft and transmission) boundary properties, leading to solving the Helmholtz equation with two distinct boundary conditions and
the Sommerfeld  radiation condition (SRC), for the unknown scattered- and  far-fields.

As described in~\cite{Jan_sph_20_book, Jan_sph_20_pap}, to understand the impact of Janus-type wave configurations,
accurate simulations of certain quantities of interests (QoI) are important. The two key  QoI are 
the  far-field intensity based differential scattering cross sections (DSCS)  and 
also the configuration orientation effect QoI, the orientation-averaged (OA) DSCS. Simulation of the
OA-DSCS is equivalent to simulating far-fields generated by the configuration, such as $\Omega_0$ in Figure~\ref{fig:01}, with the incident wave
$u^{\rm inc}$ impinging $\Omega_0$ from hundreds of  directions surrounding the configuration. 
The key numerical tool in~\cite{Jan_sph_20_pap} is the stable  far-fields based T-matrix  framework that was  analyzed (with {\em a priori bounds})  
and implemented  in~\cite{ghh:2011, tmatrom}. 
For nonlinear inverse problems based applications that use the far-field (with phase or  phase-less DSCS) data, efficient simulation of far-fields (such as at each
Newton-type iterations) are important,   see for example the 2019 book~\cite{colton:inverse} and extensive references therein.

Far-field computations based numerical schemes for such  QoI and tools, in general, do not allow for configurations with  heterogeneous structures and unbounded regions.
This is mainly because numerical partial differential equation (PDE)  algorithms  mainly apply either the ubiquitous finite element method (FEM) or the 
boundary element method (BEM). The FEM requires a bounded domain for finite number of tessellations,  and the BEM requirement of the  fundamental solution  is practical
mainly for a homogeneous medium PDE. The FEM is applied to a variational formulation equation 
of the PDE that involves domain integrals, and the BEM discretizes 
an equivalent boundary integral equation (BIE) for the boundary unknown in a chosen ansatz  for the scattered field,  in conjunction with the fundamental solution~\cite{colton:inverse,Ned:2001}.

 Accordingly for wave propagation models with a bounded heterogeneous medium ($\Omega_0$), to apply the FEM,  an artificial truncation of the unbounded region using, typically,  a polygonal boundary 
 $\Sigma$ (as in Figure~\ref{fig:01})  is introduced and a wave absorbing boundary condition (ABC) on $\Sigma$ is imposed, ignoring the SRC. The standard  variational formulation on the boundary polygonal 
 domain (such as $\Omega_2$ in Figure~\ref{fig:01}) for the heterogeneous Helmholtz PDE with ABC on $\Sigma$ is sign-indefinite~\cite{Ihlenburg:1998, mg2018} and this  non-coercive restriction 
 was removed recently by developing, analyzing, and implementing a new sign-definite  variational formulation~\cite{mg2019}. Another option  to exactly incorporate the SRC is, for example,
 by ignoring the heterogeneity and  consider   instead the homogeneous  model exterior to  the polygonal boundary $\Sigma$ or exterior to a smooth boundary $\Gamma$ 
 (as in Figure~\ref{fig:01}). The artificial smooth boundary choice is especially suitable for developing high-order spectrally accurate approximation of the scattered field 
 in the homogeneous medium using high-order BEMs.

 For Janus-type configurations, it is important to avoid restrictions in either of the above two options by using both the FEM and BEM. 
 This can be achieved   by appropriately  coupling the FEM and BEM  solutions,
 depending on the choices of the artificial boundaries to compute the interior (heterogeneous) and exterior (homogeneous) media solutions.
Such FEM-BEM  coupling mathematical frameworks have been developed and analyzed only by a few authors, but several researchers implemented the
associated  FEM-BEM computational frameworks. 

The widely used FEM-BEM coupling is obtained by choosing one artificial (FEM appropriate) polygonal boundary $\Sigma$,
see for example the review article~\cite{sayas} and references therein. In addition to analysis difficulties~\cite{sayas}, this  approach introduces restricted 
regularity for the solution exterior to $\Sigma$. The high-order regularity based (BEM appropriate) single smooth boundary choice $\Gamma$ was subsequently
developed in~\cite{KiMo:1990,KiMo:1994}. For recent implementations of the single artificial boundary based FEM-BEM framework with high-order accuracy
we refer to~\cite{GaMor:2016, mg2018, Gillman2015}, and associated  analysis issues are highlighted in~\cite[Section 6]{GaMor:2016}. 
Using two artificial interfaces  a mathematical framework  was developed 
over four decades ago in~\cite{overlap-1} and was subsequently used in~\cite{CoyleMonk, overlap-2}. 
A domain-overlapped framework  [illustrated in Figure~\ref{fig:01} with overlapped region $(\mathbb{R}^2 \setminus \overline{\Omega}_1)\cap \Omega_2$] was developed recently by the authors in~\cite{DoGaSay:2019}  
that takes advantage of  a polygonal boundary  ($\Sigma$) required for a wide-class FEM, and a smooth boundary ($\Gamma$) for high-order  spectral BEM. 
Mathematical analysis of the continuous framework in~\cite{DoGaSay:2019} establishes the equivalence  and regularity of the decomposed model.

In this work we present rigorous numerical analysis of the  FEM-BEM adaptive coupling framework introduced in~\cite{DoGaSay:2019} for solving the Helmholtz
acoustic/electromagnetic wave propagation problem on the plane with a bounded heterogeneous region, and demonstrate efficiency of the numerical algorithm  for complex Janus-type heterogeneous configurations with computational experiments. The 
algorithm works on  a suitable partition of the plane defined from  a polygonal domain containing a smooth curve that surround all heterogeneity. 
The overlapped partition is made up of the    bounded polygonal domain containing the heterogeneity,  and the unbounded homogeneous region exterior to the smooth curve.  

On the  bounded domain of the partition we approximate  the solution by a FEM with classical continuous piecewise polynomials on triangular grids,  whereas a 
high-order Nystr\"om BEM is used to compute the scattered wave in the unbounded region. Both solutions are coupled by demanding to coincide in the two artificial boundaries that ensures the FEM and BEM solutions matching in the intersecting common domain of the partition. We prove that the convergence of the scheme in natural norms is of the same order as the best approximations of the projection in the FEM and BEM finite dimensional spaces. In addition, since the Nystr\"om method is super-algebraically convergent, only a relatively few degrees of freedom are required in the BEM solver to keep the error of the same order as the FEM solution, and hence the algorithm facilitates accurate far field, DSCS, and OA-DSCS computations.

The rest of this article is organized as follows: In Section~\ref{sec:model}, we recall the Helmholtz model and an equivalent decomposition decomposition framework, both at continuous level. In Section~\ref{sec:disc-framework}  we setup a discrete counterpart of the decomposition framework and the overlapped FEM-BEM algorithm. Rigorous numerical analysis of the FEM-BEM algorithm establishing optimal order convergence, of the hybridized numerical solutions,  in Section~\ref{sec:num-anal} forms the main theoretical contribution of the article that include references to several results proven in the two Appendix sections of this article. In Section~\ref{sec:num-exp}, we  computationally demonstrate the
described algorithm and theoretically  analysis
using three distinct sets of experiments, in conjunction with the theory and    practical applicability, including implementation of the algorithm for multiple-particle Janus-type configurations with non-smooth solutions, and also for complex structured heterogeneous regions.  

\section{Helmholtz model and decomposition framework} \label{sec:model}
Throughout the article, let $n$ be a piecewise-continuous refractive index  with heterogeneity restricted to a bounded domain $\Omega_0 \subset \mathbb{R}^2$, and in the 
exterior  we take  $n|_{\Omega_0^{\rm c}}\equiv 1$  so that the exterior $\Omega_0^{\rm c}~(:=  \mathbb{R}^2 \setminus \overline{\Omega_0}$) is 
a free-space unbounded medium.

The scattered field $u^s$ is induced by an incident wave $u^{\rm inc}$ (with wavenumber $k >0$) from the exterior region impinging on the heterogeneous medium $\Omega_0$. 
It is convenient to assume that that incident field satisfies the homogeneous  Helmholtz equation $\Delta v+k^2v =0$  in all of the plane  $\mathbb{R}^2$, although it is sufficient to take
it to satisfy  outside of a compact set in  $\mathbb{R}^2$ containing, say, a point-source. Physically appropriate incident waves such as the plane-wave and point-source have these properties. 

We seek the  total wave field  $u~(:= u^s + u^{\rm inc}) \in H^1_{\rm loc}(\mathbb{R}^2)$, representing acoustic or electromagnetic fields, satisfying the 
uniquely solvable problem governed by the variable coefficient  Helmholtz equation and the SRC {(with  $\xh := \x / |\x| \in \sphere$)}:
\begin{equation}
 \label{eq:theproblem}
 \left |
 \begin{array}{rcl}
 \Delta u + k^2 n^2\:u &=&0,\quad \text{in }\mathbb{R}^2, \\
 \partial_{{\xh}} u^s-\mathrm{i}k{u}^s&=&o(|{{\x}|^{-1/2}}),\quad \text{as }| {{\x}} |\to\infty.
 \end{array}
 \right.
\end{equation}
We note that in the heterogeneous medium $\Omega_0$,   $u^s$ is the interior unknown field and is the solution of inhomogeneous Helmholtz equation with inhomogeneous term 
$f = -(\Delta u^{\rm inc} + k^2 n^2\:u^{\rm inc})$.   For the unknown scattered field $u^s$  exterior to $\Omega_0$, in~\eqref{eq:theproblem},   the SRC
\begin{equation} \label{eq:radiation-condition}
\lim_{|\x| \to \infty} |\x|^{1/2} \left( \frac{\partial u^s(\x)}{\partial   {\xh}} - \mathrm{i} k u^s(\x) \right)  = 0.
\end{equation} 
holds uniformly in all directions $\xh = \x / |\x| \in \sphere$. The scattered field $u^s$ 
is a radiating field and, as a consequence of the SRC,   its behavior at 
infinity is captured by the far-field $u^\infty \in L^2(\sphere)$, where
\begin{equation}
\label{eq:far-field}
u^\infty(\xh) = \lim_{|\x| \to \infty} |\x|^{1/2}  e^{-\mathrm{i} k |\x|} u^s(\x). 
\end{equation}
For the total field $u$ induced by a plane wave with incident direction $\ddh \in \sphere$, it is appropriate to denote the associated
far-field as $u^\infty(\xh; \ddh) = u^\infty(\theta; \phi)$,  with $\xh = p(\theta) =
{(\cos\theta, \sin\theta )}
$ and  $\ddh = p(\phi) = {(\cos \phi, \sin\phi)}$%
, for 
some $\theta,\phi \in [0, 2\pi)$. The single-incident plane wave QoI DSCS and multiple-incident plane waves QoI OA-DSCS are then given by~\cite{Jan_sph_20_book, Jan_sph_20_pap} 
\begin{equation} \label{eq:OA-DSCS}
u_{\rm DSCS}(\theta; \phi) = |u^\infty(\theta; \phi)|^2; \qquad u^{\rm OA}_{\rm DSCS}(\theta)   = \frac{1}{2\pi} \int_0^{2\pi }|u^{\infty}(\theta; \phi)|^2\,{\rm d}\phi.
\end{equation}
Clearly, computation of the $u^{\rm OA}_{\rm DSCS}$ with high-accuracy   requires discretization of the above integral with over thousand discrete incident
direction angles $\phi$, leading to  solving the same number of Helmholtz model~\eqref{eq:theproblem}-\eqref{eq:radiation-condition} for {many} distinct inputs. 
This motivates the difficulty of evaluating   $u^{\rm OA}_{\rm DSCS}$ Janus-type configurations with piecewise-continuous refractive indices defined 
on non-trivial geometries.

For the given wave propagation problem \eqref{eq:theproblem},  next we recall an equivalent decomposition framework introduced and analyzed in~\cite{DoGaSay:2019}.
The framework introduce two artificial curves $\Gamma$ and $\Sigma$ with interior $\Omega_1$ and $\Omega_2$, respectively, satisfying $\overline{\Omega}_0\subset \Omega_1\subset \overline{\Omega}_1\subset\Omega_2$, with the assumption that $\Gamma$ is smooth and $\Sigma$ is a polygonal boundary. A sketch of the different domains is displayed in Figure \ref{fig:01}. We denote henceforth  $\Omega_1^{\rm c}:=\mathbb{R}^2\setminus\overline{\Omega}_1$. At continuous level, it is convenient to consider
the decomposition framework using operators defined on classical  Sobolev spaces. To this end, for a general domain $D \in \mathbb{R}^2$ with boundary 
$\partial D$ and for real  $m$,   let  $H^m(D)$ denote the classical Sobolev space. 

We also consider $H^s(\partial D)$ which is well defined for any $s$ if $D$ is smooth. Recall that in this case the trace operator $\gamma_{\partial D}: H^{s+1/2}(D)\to H^s(\partial D)$ is continuous for any $s>0$ as consequence of the Sobolev Trace Theorems, see for example \cite{MR937473} or \cite[Ch 4]{McLean:2000}. 


For Lipschitz domains $\Omega$ with boundary $\Sigma =\partial\Omega$, such as the chosen polygonal domain $\Omega_2$, $H^s(\Sigma)$ is defined only 
for $s\in[-1,1]$ \cite{AdFo:2003,Gr:2011,McLean:2000}. 
We then commit a (slight) abuse of notation and set for $s>1$  
\begin{equation}\label{eq:3.2}
H^s(\Sigma)=\left\{\gamma_\Sigma u\  :\ u\in H^{s+1/2}(D)\right\}.
\end{equation}
\changeSecondRefereeG{In~\eqref{eq:3.2} the open domain $D$ is chosen such that $\Sigma\subset \overline{D}$, and the
space is  endowed with the image norm. It is known that the space is independent of $D$  and, for $s\in(0,1)$, it is a classical Sobolev space.
Thus, {using the definitions,}   the trace operator $\gamma_\Sigma: H^{s+1/2}(D)\to H^{s}(\Sigma)$ is  continuous for  $s\in (0,1)\cup (1,\infty)$.}

The decomposition framework starts with an interior and an exterior Helmholtz problem: 
\begin{itemize}
\item ({\bf{Interior Dirichlet Helmholtz problem in $\Omega_2$ with polygonal boundary $\Sigma$}}): \\ Given 
$f_\Sigma \in H^{1/2}(\Sigma)$, find $\omega_{\rm int} \in H^1(\Omega_2)$ so that
\begin{equation}
 \label{eq:FEM:0}
 \left|
 \begin{array}{rcl}
 \Delta \omega_{\rm int} + k^2 n^2\:\omega_{\rm int} &=&0,\quad \text{in }\Omega_2,\\
 \gamma_\Sigma \omega_{\rm int}  &=&f_\Sigma.
 \end{array}
 \right.
\end{equation}
\item ({\bf{Exterior Dirichlet Helmholtz problem in $\Omega_1^{\rm c}$ with smooth boundary $\Gamma$}}): \\ Given $f_\Gamma\in H^{1/2}(\Gamma)$, 
find $\omega_{\rm ext} \in H^1_{\rm loc}(\Omega_1^{\rm c})$ so that
\begin{equation}
 \label{eq:BEM:0}
 \left|
 \begin{array}{rcl}
 \Delta \omega_{\rm ext} + k^2 \omega_{\rm ext} &=&0,\quad \text{in }\Omega_1^{\rm c},\\
 \gamma_\Gamma \omega_{\rm ext} &=&f_\Gamma,\\
 \partial_{{\xh}}\omega_{\rm ext} - {\rm i}k\omega_{\rm ext} &=& o({|\x|^{-1/2}}).
 \end{array}
 \right.
\end{equation} 
\end{itemize}
The SRC in~\eqref{eq:BEM:0} ensures that the exterior Dirichlet problem is uniquely solvable~\cite{colton:inverse,Ned:2001}
for all wavenumbers $k$. For the interior Dirichlet Helmholtz problem~\eqref{eq:FEM:0} the well-posedness {does not} 
hold for all wavenumbers $k$~\cite{Ihlenburg:1998}, and throughout this article
we assume that the wavenumber $k$ is such that {$\omega_{\rm int}$} is the unique solution of~\eqref{eq:FEM:0}. The well-posedness assumption
for the interior Dirichlet Helmholtz problems in $\Omega_2$ (and in the overlapped region $\Omega_{12} := \Omega_1^c \cap \Omega_2$) 
can be easily avoided, for example, by modifying  the artificial boundaries $\Sigma$ and $\Gamma$~\cite[Section~2.1]{DoGaSay:2019}.

It is convenient to use operators to describe the continuous  decomposition framework. To this end, corresponding to the interior Dirichlet problem~\eqref{eq:FEM:0} we consider two operators  $\KSigmaOmegaTwo, \KSigmaGamma$; and  associated with the exterior problem~\eqref{eq:BEM:0} we consider two operators $\KGammaOmegaOneC, \KGammaSigma$. These pairs of operators are defined, using the unique solution  $\omega_{\rm int}$ of~\eqref{eq:FEM:0} and   $\omega_{\rm ext}$ of~\eqref{eq:BEM:0} and their  traces 
respectively on $\Gamma$ and $\Sigma$, as follows:
\begin{equation}\label{eq:four_ops}
\KSigmaOmegaTwo f_\Sigma = \omega_{\rm int}, \qquad \KSigmaGamma f_\Sigma= \gamma_\Gamma \omega_{\rm int}; \qquad \quad
\KGammaOmegaOneC f_\Gamma  = \omega_{\rm ext}, \quad \KGammaSigma f_\Gamma =  \gamma_\Sigma \omega_{\rm ext}.
\end{equation}
In~\cite{DoGaSay:2019}, the authors proved that the unique solution $u$ of \eqref{eq:theproblem} can be constructed in two steps as follows:
\begin{subequations}
\label{eq:BEMFEM}
\begin{enumerate}
\item  Find   $f_\Sigma:\Sigma\to \mathbb{C}$ and $f_\Gamma:\Gamma\to \mathbb{C}$   so that
\begin{equation}
 \label{eq:BEMFEM:0}
 \left|
 \begin{array}{ccccrcl}
 f_\Sigma&-&\KGammaSigma f_\Gamma&=& \gamma_{\Sigma} u^{\rm inc},\\
 - \KSigmaGamma f_\Sigma&+&f_\Gamma &=& -\gamma_{\Gamma} u^{\rm inc}.
 \end{array}
 \right.
\end{equation} 

\item Construct 
\begin{equation}
 \label{eq:BEMFEM:01}
  u=\begin{cases}
     \KSigmaOmegaTwo f_\Sigma,\quad&\text{in $\Omega_2$},\\
     \KGammaOmegaOneC f_\Gamma+u^{\rm inc},\quad&\text{in $\Omega_1^{\rm c}$}.
    \end{cases}
 \end{equation}
\end{enumerate}
\end{subequations}

The boundary unknowns system~\eqref{eq:BEMFEM:0} can be written in matrix-valued operator form as 
\begin{equation}
 \label{eq:BEMFEM:0b}
 \left({\cal I}-{\cal K}\right)\begin{bmatrix}
                          f_\Sigma\\
                          f_\Gamma
                          \end{bmatrix}=\begin{bmatrix*}[r]
                          \gamma_{\Sigma} u^{\rm inc}\\
                          -\gamma_{\Gamma} u^{\rm inc}
                          \end{bmatrix*},\qquad {\cal K}:=\begin{bmatrix}
&\KGammaSigma\\
\KSigmaGamma &
\end{bmatrix}.
\end{equation}
(${\cal I}$ is obviously the identity matrix operator.)
It is not difficult to see that the off-diagonal block ${\cal K}: H^{0}(\Sigma)\times H^{0}(\Gamma) \to  H^{s}(\Sigma)\times H^{s}(\Gamma) $ is continuous for any 
$s \ge 0$\cite{DoGaSay:2019}. 

The following result summarizes the equivalence of the decomposition framework to the original problem~\eqref{eq:theproblem} at the continuous level.
\begin{theorem}\label{theo:main:01}
 Assume that the only solution to problem
\begin{equation}
 \label{eq:BEMFEM:02}
 \left| 
 \begin{array}{rcl}
 \Delta v + k^2 v &=&0,\quad \text{in }\Omega_{12} =\Omega_1\cap\Omega_2^c,\\
 \gamma_\Gamma v &=&0,\quad \gamma_\Sigma v\,=\,0
 \end{array}
 \right.
\end{equation} 
is the trivial one. Then ${\cal I}-{\cal K}: H^{s}(\Sigma)\times H^{s}(\Gamma)\to H^{s}(\Sigma)\times H^{s}(\Gamma)$ is invertible for any $s\ge 0$. Therefore \eqref{eq:BEMFEM:0} is uniquely solvable. Furthermore, $u$ defined by~\eqref{eq:BEMFEM:01} is the unique solution of the full Helmholtz problem \eqref{eq:theproblem}. 
\end{theorem}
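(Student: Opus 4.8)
The plan is to establish invertibility of $\mathcal{I}-\mathcal{K}$ through the Fredholm alternative, reducing the question to injectivity, and then to read off unique solvability of \eqref{eq:BEMFEM:0} and the equivalence statement as consequences. First I would argue that $\mathcal{K}$ is compact on $H^s(\Sigma)\times H^s(\Gamma)$. By the mapping property quoted just before the statement, the off-diagonal operator sends $H^0(\Sigma)\times H^0(\Gamma)$ continuously into $H^{s'}(\Sigma)\times H^{s'}(\Gamma)$ for every $s'\ge 0$; restricting the domain to the smaller space $H^s\subset H^0$ and choosing $s'>s$, the operator $\mathcal{K}:H^s\to H^{s'}$ is bounded, while the embedding $H^{s'}\hookrightarrow H^s$ (on the smooth curve $\Gamma$ and, in the extended sense of \eqref{eq:3.2}, on the polygon $\Sigma$) is compact. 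Hence $\mathcal{K}$ is compact on $H^s\times H^s$, and $\mathcal{I}-\mathcal{K}$ is invertible if and only if it is injective.

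For injectivity, suppose $(f_\Sigma,f_\Gamma)$ lies in the kernel, and set $\omega_{\rm int}:=\KSigmaOmegaTwo f_\Sigma$ and $\omega_{\rm ext}:=\KGammaOmegaOneC f_\Gamma$, the interior and exterior Dirichlet solutions of \eqref{eq:FEM:0} and \eqref{eq:BEM:0}. The two kernel equations read $f_\Sigma=\KGammaSigma f_\Gamma=\gamma_\Sigma\omega_{\rm ext}$ and $f_\Gamma=\KSigmaGamma f_\Sigma=\gamma_\Gamma\omega_{\rm int}$. I would then introduce $v:=\omega_{\rm int}-\omega_{\rm ext}$ on the overlap region $\Omega_{12}=\Omega_1^{\rm c}\cap\Omega_2$; since $n\equiv 1$ there, both terms solve the constant-coefficient Helmholtz equation, so $\Delta v+k^2v=0$ in $\Omega_{12}$. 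On $\Sigma$ we have $\gamma_\Sigma\omega_{\rm int}=f_\Sigma=\gamma_\Sigma\omega_{\rm ext}$, and on $\Gamma$ we have $\gamma_\Gamma\omega_{\rm ext}=f_\Gamma=\gamma_\Gamma\omega_{\rm int}$, whence $\gamma_\Sigma v=\gamma_\Gamma v=0$. Thus $v$ solves the overlap problem \eqref{eq:BEMFEM:02}, and the hypothesis forces $v\equiv 0$, i.e.\ $\omega_{\rm int}=\omega_{\rm ext}$ on $\Omega_{12}$.

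Because $\omega_{\rm int}$ and $\omega_{\rm ext}$ agree on the open set $\Omega_{12}$, I would glue them into a single $w$ defined by $\omega_{\rm int}$ on $\Omega_2$ and $\omega_{\rm ext}$ on $\Omega_1^{\rm c}$ (these cover $\mathbb{R}^2$ since $\overline{\Omega}_1\subset\Omega_2$); agreement on an open overlap, rather than mere matching of traces on one interface, is what guarantees there is no spurious jump and that $w\in H^1_{\rm loc}(\mathbb{R}^2)$. The glued field satisfies $\Delta w+k^2n^2w=0$ in all of $\mathbb{R}^2$ (using $n\equiv1$ outside $\Omega_1$ so the two descriptions are compatible) and inherits the SRC from $\omega_{\rm ext}$. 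It is therefore a radiating solution of \eqref{eq:theproblem} with $u^{\rm inc}\equiv 0$, which by the assumed unique solvability of \eqref{eq:theproblem} must vanish. Hence $\omega_{\rm int}=\omega_{\rm ext}\equiv0$, giving $f_\Sigma=\gamma_\Sigma\omega_{\rm ext}=0$ and $f_\Gamma=\gamma_\Gamma\omega_{\rm int}=0$; injectivity follows, and with it invertibility of $\mathcal{I}-\mathcal{K}$ and unique solvability of \eqref{eq:BEMFEM:0}.

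For the final equivalence claim I would run the construction in reverse: given the unique $(f_\Sigma,f_\Gamma)$ and the field $u$ of \eqref{eq:BEMFEM:01}, set $v:=\omega_{\rm int}-\omega_{\rm ext}-u^{\rm inc}$ on $\Omega_{12}$ and use the two equations of \eqref{eq:BEMFEM:0} to check $\gamma_\Sigma v=\gamma_\Gamma v=0$, so that \eqref{eq:BEMFEM:02} again forces $v\equiv0$. This is precisely the consistency $\KSigmaOmegaTwo f_\Sigma=\KGammaOmegaOneC f_\Gamma+u^{\rm inc}$ on the overlap that makes $u$ well defined and $H^1_{\rm loc}$; it then solves $\Delta u+k^2n^2u=0$ in $\mathbb{R}^2$ with $u-u^{\rm inc}$ radiating, so by uniqueness for \eqref{eq:theproblem} it is the unique solution. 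I expect the main obstacle to be the injectivity step, specifically verifying that agreement of $\omega_{\rm int}$ and $\omega_{\rm ext}$ on the open overlap yields an admissible global $H^1_{\rm loc}$ radiating field to which the uniqueness for \eqref{eq:theproblem} can be applied; the compactness argument is comparatively routine, its only delicate point being the extended trace spaces \eqref{eq:3.2} on the polygon $\Sigma$.
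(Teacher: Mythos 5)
Your proposal is correct and follows essentially the same route as the paper's proof: Fredholm alternative (justified via the smoothing property of $\cal K$ plus compact embedding), injectivity by showing a kernel element produces a solution of the overlap problem \eqref{eq:BEMFEM:02} and hence a global radiating solution of the homogeneous problem \eqref{eq:theproblem}, which must vanish. The only cosmetic difference is that you conclude $f_\Sigma=f_\Gamma=0$ directly from the vanishing of $\omega_{\rm int}$ and $\omega_{\rm ext}$, whereas the paper invokes analytic continuation to propagate $\KGammaOmegaOneC f_\Gamma^*=0$ from $\Omega_2^{\rm c}$ to all of $\Omega_1^{\rm c}$; both are valid.
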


\begin{proof} This result is proven in \cite{DoGaSay:2019}. We will give here a sketch of the proof for the sake of completeness.  By Fredholm alternative it suffices to show that ${\cal I}-{\cal K}$ is one-to-one. 
 We note that any if $(f_\Sigma^*, f_\Gamma^*)\in N({\cal I}-{\cal K})$ then 
  $\omega_{12}:=(\KSigmaOmegaTwo f_\Sigma^*- \KGammaOmegaOneC f_\Gamma^*)|_{\Omega_{12}}$ is a solution for \eqref{eq:BEMFEM:02}. By hypothesis,  
 $\omega_{12}=0$. Hence, $u$ in \eqref{eq:BEMFEM:01} is well defined for $u^{\rm inc}=0$. The  well-posedness  of the scattering problem \eqref{eq:theproblem}, implies that $u=0$. In particular, 
 $\KGammaOmegaOneC  f_\Gamma^*{|_{\Omega_2^c}}=0$, and from the  analytic continuation principle $\KGammaOmegaOneC  {f^\star_\Gamma}=0$.   Therefore, also ${\KSigmaOmegaTwo f_\Sigma^*|_{\Omega_{12}}}=0$  and hence $(f_\Sigma^*, f_\Gamma^*) ={\bf 0}$. 
\end{proof}

\begin{remark}\label{rem:multi} For the OA-DSCS calculations with a large number of incident plane waves $u^{\rm inc}$, a marked advantage of our equivalent
formulation~\eqref{eq:BEMFEM:0}-\eqref{eq:BEMFEM:01} compared to the original model problem~\eqref{eq:theproblem} [with a large number of inhomogeneous 
terms $f = -(\Delta u^{\rm inc} + k^2 n^2\:u^{\rm inc})$ in $\Omega_0$] is that the interior and exterior homogeneous problems 
$\KSigmaOmegaTwo f_\Sigma, \KGammaOmegaOneC f_\Gamma$ (and hence their traces 
$\KSigmaGamma f_\Sigma, \KGammaSigma f_\Gamma$) can be
setup \changeSecondRefereeG{ {\em independently} }of the incident waves. Since the unknowns $f_\Sigma$ and $f_\Gamma$ are defined, respectively, only on boundary curves $\Sigma$ and $\Gamma$, these unknowns can represented by a few boundary unknowns 
(or boundary basis   functions) and hence the   associated Helmholtz 
problems setup  $\KSigmaOmegaTwo f_\Sigma, \KGammaOmegaOneC f_\Gamma$ is a   {\em naturally parallel} process for computational purposes. 
\end{remark}

\section{Discrete decomposition FEM-BEM  framework}~\label{sec:disc-framework}

The numerical discretization of the continuous decomposition framework~\eqref{eq:BEMFEM} is essentially obtained by replacing the four continuous operators 
in~\eqref{eq:four_ops} using appropriate FEM and BEM based discrete counterparts.  For   $\KSigmaOmegaTwo$ we will use the standard FEM with continuous piecewise polynomial elements on a triangular conformal mesh of $\Omega_2$~\cite{Ihlenburg:1998}.

For the BEM certainly an extensive range of methods is at our disposal in the literature~\cite{colton:inverse,Ned:2001}. We will restrict ourselves to  the spectrally 
accurate Nystr\"om method~\cite{Kress:2014, colton:inverse} .This scheme provides a discretization of the key boundary integral operators of the associated Calderon calculus;  in this article we will use only make use of the discrete Single- and Double-Layer operators that  converge super-algebraically, and it is not difficult to implement. 
A disadvantage of the  Nystr\"om method is that it requires an accurate differentiable  parameterization of the boundary. This is because the method is based on splitting  the integral operators into regular and singular parts for which   appropriate decompositions and factorizations of the kernels of the operators are needed. This is not a severe restriction in our case since $\Gamma$ is an auxiliary user-chosen artificial curve and therefore can be taken to be simple and smooth. 
 
In the next two subsections we recall the standard FEM and Nystr\"om procedure and conclude this section with the discrete FEM-BEM decomposition framework required
for the main focus of this work on the numerical analysis of the FEM-BEM algorithm~\cite{DoGaSay:2019} .

\subsubsection*{The FEM procedure }
Let $\{\mathcal{T}_h\}_h$ be a sequence of regular triangular meshes with $h$ denoting the discrete parameter,  the diameter of the largest element of the grid. We then write  $h\to 0$  to mean that the maximum of the diameters of the elements tends to 0. A technical mesh assumption, not restrictive in practice,   described  in detail in  Assumption 1 below, will be
used in our proofs to ensure (i) a faster convergence of the FEM in stronger norms around $\Gamma$ (see Theorem \ref{theo:A02} in Appendix); and (ii) the stability of the full method. 

On ${\cal T}_h$, we construct the finite dimensional FEM space
\[
  \mathbb{P}_{h,d}:=\{{v}_h\in{\cal C}^0(\Omega_2) :\ u_h|_{T_h}\in\mathbb{P}_d,\ \forall T_h \in {\cal T}_h\},
 \]
 where $\mathbb{P}_d$ is the space of bivariate polynomial of degree $d$. Then given $f_{\Sigma}^h\in  \gamma_\Sigma \mathbb{P}_{h,d}$, we define 
 \[
\KSigmaOmegaTwo^h:\gamma_\Sigma \mathbb{P}_{h,d}\to  \mathbb{P}_{h,d}
 \]  
 as $\KSigmaOmegaTwo^hf_{\Sigma}^h := u_h$ that is the unique  of the standard FEM discrete system:
 \begin{equation}\label{eq:4.1}
  \left|
  \begin{array}{l}
  u_h\in  \mathbb{P}_{h,d}\\
b_{k,n}(u_h,v_h)=0,\quad \forall v_h\in \mathbb{P}_{h,d}\cap H_0^1(\Omega_2)\\
\gamma_\Sigma u_h = f_{\Sigma}^h,
  \end{array}
  \right. \qquad b_{k,n}(u,v):=\int_{\Omega_2}\nabla u \cdot\nabla v - k^2 \int_{\Omega_2} n^2\, u  v. 
 \end{equation}
It is  well known that $\KSigmaOmegaTwo^h$~\cite{Ihlenburg:1998} is well defined for any  sufficiently fine mesh. Notice also that $\KSigmaOmegaTwo^h$ is defined on the discrete space $\gamma_\Sigma  \mathbb{P}_{h,d} $, the   trace {finite element} space on the boundary $\Sigma$.  Hence, with the help of 
\begin{equation}\label{eq:def:QhSigma}
 {\rm Q}^h_{\Sigma}:{\cal C}^0(\Sigma)\to \gamma_\Sigma \mathbb{P}_{h,d}, 
\end{equation}
the  nodal interpolation operator on $\gamma_\Sigma  \mathbb{P}_{h,d}$ defined by the  finite element space, we have 
\[
 \KSigmaOmegaTwo^h  {\rm Q}^h_{\Sigma} \approx \KSigmaOmegaTwo,
\]
providing an optimal approximation in $\mathbb{P}_{h,d}$ for sufficiently smooth Dirichlet data $f_\Sigma^h$ (see Theorem \ref{theo:5.2} in the next section).


\subsubsection*{The BEM procedure }
Let 
\begin{equation}\label{eq:parameterization}
{\bf x}=(x_1(t),x_2(t)):\mathbb{R}\to\Gamma
\end{equation}
be a smooth $2\pi-$periodic parameterization of $\Gamma$. 
We denote by ${\rm SL}_k$, ${\rm DL}_k$, the (parameterized) layer potentials,  defined for ${\bm z}\in\mathbb{R}^2\setminus\Gamma$ as,
\begin{eqnarray*}
 ({\rm SL}_k\varphi_{{\rm per}})({\bm z})&:=&\int_0^{2\pi} \Phi_k({\bm z}-{\bf x}(t))\varphi_{{\rm per}}(t)\,{\rm d}t,\,\quad\\
 ({\rm DL}_kg_{{\rm per}})({\bm z})&:=&\int_0^{2\pi} \big(\nabla_{\bm y} \Phi_k({\bm z}-{\bm y})\big)\Big|_{{\bm y}={\bf x}(t)}\cdot \bm{\nu}(t)\,g_{{\rm per}}(t)\,{\rm d}t,
 \end{eqnarray*}
where $\Phi_k =\frac{\rm i}4 H_0^{(1)}(k|\cdot|)$ is the fundamental solution for the constant  coefficient   Helmholtz operator in $\mathbb{R}^2$ with wavenumber $k$; 
$\bm{\nu}(t):=(x_2'(t),-x_1'(t))$ is a non-normalized normal vector. 
Observe that $|{\bf x}'(t)|>0$ is then incorporated to the density in ${\rm SL}_k$ and to the kernel in ${\rm DL}_k$. We follow the same convention for the single and double boundary operator defined {for $2\pi$-periodic densities $\varphi_{{\rm per}}$ and $ g_{{\rm per}}$} as  
 \begin{eqnarray}\label{eq:Vk}
 \hspace{-0.3in} (\mathrm{V}_{k}\varphi_{{\rm per}})(s) \hspace{-0.1in} &:=&\left(\gamma_\Gamma  {\rm SL}_k\varphi_{{\rm per}} \right) ({\bf x}(s)) = \int_0^{2\pi} \Phi_k({\bf x}(s)-{\bf x}(t))\varphi_{{\rm per}}(t)\,{\rm d}t\,\quad\\  
\hspace{-0.3in}  (\mathrm{K}_{k}g_{{\rm per}})(s) \hspace{-0.1in}&:=&\pm\tfrac12 g_{{\rm per}}(s) + (\gamma^{\mp}_{\Gamma}{\rm DL}_k g_{{\rm per}})(s)
   =\int_0^{2\pi} \big(\nabla_{{\bm y}} \Phi_k({\bf x}(s)-{\bm y})\big)\Big|_{{\bm y}={\bf x}(t)} \hspace{-0.15in} \cdot \bm{\nu}(t)\,g_{{\rm per}}(t)\,{\rm d}t. 
   \label{eq:Dk}
 \end{eqnarray}
The Brakhage-Werner formulation, first introduced in \cite{MR190518} {(see also~\cite{colton:inverse, Ned:2001})} provides a robust representation for the solution of the exterior Dirichlet solution for the Helmholtz problem \eqref{eq:BEM:0}:
\begin {equation} \label{eq:potential}
\KGammaOmegaOneC f_\Gamma= ({\rm DL}_k - {\rm i}k{\rm SL}_k )( \tfrac12\mathrm{I}+\mathrm{K}_k-\mathrm{i}k\mathrm{V}_k)^{-1} (f_\Gamma \circ{\bf x}),
\end{equation}
with $\mathrm{I}$  obviously being the identity operator. The above representation of the exterior scattered field, satisfying the SRC, provides an exact ansatz for the associated 
far-field  as $({\cal {F\varphi}}_{\rm per})({\zh}), ~\zh \in \sphere$
defined, using the boundary density 
$\varphi_{\rm per} = ( \tfrac12\mathrm{I}+\mathrm{K}_k-\mathrm{i}k\mathrm{V}_k)^{-1} (f_\Gamma \circ{\bf x})$~\cite{colton:inverse}: 
	\begin{equation}\label{eq:far}
	\left({\cal F}\varphi_{\rm per}\right)(\zh):=
	\sqrt{\frac{k}{8\pi } } \exp\big(-\tfrac14\pi{\rm i}  \big)\int_{0}^{2\pi} \exp(
	-\mathrm{i} k (\widehat{\bm z}\cdot{\bf x}(t))) \big[\widehat{\bm z}\cdot(x_2'(t),-x_1'(t))+1\big]\varphi_{\rm per} (t)\,{\rm d}t. 
	\end{equation}
Thus  accurate computational approximations of $\varphi_{\rm per}$ provide spectrally
accurate approximations for both the scattered and the far-field. For 
computing
$\varphi_{\rm per}$,
the Nystr\"om BEM solver makes use of a decomposition of  the single- and double- layer operator into {\em logarithmic} and regular parts: 
\begin{eqnarray*}
 ({\rm V}_k\varphi_{{\rm per}})(s)&=&\int_{0}^{2\pi}A(s,t)\log\sin^2\tfrac{s-t}2\:\varphi_{{\rm per}}(t)\:{\rm d}t+\int_{0}^{2\pi}B(s,t)\varphi_{{\rm per}}(t)\:{\rm d}t,\\ 
 ({\rm K}_kg_{{\rm per}})(s)&=&\int_{0}^{2\pi}C(s,t)\log\sin^2\tfrac{s-t}2\:g_{{\rm per}}(t)\:{\rm d}t+\int_{0}^{2\pi}D(s,t)g_{{\rm per}}(t)\:{\rm d}t.
 \end{eqnarray*}  
Functions $A,\ B,\ C,\ D$ are smooth  and $2\pi-$biperiodic. 

Next with $N$ being a positive integer BEM discretization parameter, we consider the grid 
\[
\{t_j\}_{j\in\mathbb{Z}}\subset\mathbb{R}, \quad   t_j:= \tfrac{j \pi}{N}, 
 \] 
 and the $2N$-dimensional space of trigonometric polynomials defined by 
   \begin{equation}\label{eq:def:Tn}
  \mathbb{T}_N:= \spann\langle e_\ell\ :\ \ell\in\mathbb{Z}_N\rangle,\quad  e_{\ell}(t): =\exp({\rm i}\ell t) 
 \end{equation}
 where $\mathbb{Z}_N= \{-N+1,-N+2,\ldots, N\}$. The interpolation operator for $2\pi-$periodic functions $\varphi_{{\rm per}}$ 
 \[
\mathbb{T}_N\ni  \mathrm{Q}_N\varphi_{{\rm per}} \quad \text{s.t.}\quad (\mathrm{Q}_N\varphi_{{\rm per}})(t_j)=\varphi_{{\rm per}}(t_j),
 \]
 is  known to be well defined. 
 
 The Nystr\"om method is based on the following approximations for the single- and double-layer operators: 
\begin{eqnarray*}
 ({\rm V}_{k}^{N}\varphi_{{\rm per}} )(s)&:=& \int_{0}^{2\pi}{\rm Q}_N(A(s,\cdot)\varphi_{{\rm per}} \big)(t)\log\sin^2\tfrac{s-t}2\:{\rm d}t+\int_{0}^{2\pi}{\rm Q}_N(B(s,\cdot)\varphi_{{\rm per}} \big)(t)\:{\rm d}t\\ 
 ({\rm K}_{k}^{N}g_{\rm per} )(s)      &:=& \int_{0}^{2\pi} {\rm Q}_N(C(s,\cdot)g_{\rm per} \big)(t)\log\sin^2\tfrac{s-t}2\:{\rm d}t+\int_{0}^{2\pi}{\rm Q}_N(D(s,\cdot)g_{\rm per} \big)(t)\:{\rm d}t.
 \end{eqnarray*}
 We stress that the integrals above can be computed exactly. Indeed,
 \[
-\frac{1}{2\pi}\int_{0}^{2\pi} \log\sin^2\tfrac{t}2\:e_n(t)\:{\rm d}t = 
-\frac{1}{2\pi} \int_{0}^{2\pi} \log\sin^2\tfrac{t}2 \cos(n t)\:{\rm d}t=\begin{cases}
                                                          \log 4,      &n=0\\
                                                          \frac{1}{|n|}, &n\ne 0,
                                                         \end{cases}
 \]
 and
 \[
   \int_{0}^{2\pi} (\mathrm{Q}_N g_{\rm per}) (t)\:{\rm d}t =
 \frac{\pi}{N} \sum_{j=0}^{2N-1} (\mathrm{Q}_N g_{\rm per}) (t_j) =
 \frac{\pi}{N} \sum_{j=0}^{2N-1}   g_{\rm per}  (t_j)  ,\qquad 
 %
 \]
 i.e., for the regular part the suggested approximation  yields the trapezoidal/rectangular rule for $2\pi-$periodic functions.   
 
 The evaluation of the potentials, as integral operators with smooth kernel, is carried out in a similar way:
 \begin{equation}\label{eq:SLNDLN}
 \begin{aligned} 
 \big({\rm SL}_{k}^{N}\varphi_{{\rm per}} \big)({\bm z})\ &:=\ \int_{0}^{2\pi}
 {\rm Q}_N(\Phi_k({\bm z}-{\bf x}(\cdot))\varphi_{{\rm per}})(t)\,{\rm d}t\\ 
 \big({\rm DL}_{k}^{N}g_{\rm per}\big)({\bm z})\ &:=\ \int_{0}^{2\pi}
 {\rm Q}_N\big(\big(\nabla_{\bm y}\Phi_k({\bm z}-{\bm y})\big) \big|_{{\bm y}={\bf x}(\cdot)}\cdot \bm{\nu}(\cdot)\,g_{\rm per})
  (t)\,{\rm d}t.
  \end{aligned} 
 \end{equation}

 We are ready to present the discrete version for $\KGammaOmegaOneC^h$. Hence, in view of
\begin{equation}\label{eq:4.2}
\KGammaOmegaOneC f_\Gamma = ({\rm DL}_k - {\rm i}k{\rm SL}_k ){\cal L}_k(f_\Gamma \circ{\bf x}),\quad \text{with }
{\cal L}_k  :=( \tfrac12\mathrm{I}+\mathrm{K}_k-\mathrm{i}k\mathrm{V}_k)^{-1} , 
\end{equation} 
we define for $f_{\rm per} :=   f_\Gamma \circ{\bf x}$
 \begin{equation}\label{eq:4.3}  
 \KGammaOmegaOneC^N  {f_{\rm per}}:=(\mathrm{DL}_{k}^{N}-{\rm i}k\mathrm{SL}_{k}^{N})  {\cal L}_{k}^{N}  {f_{\rm per}},\quad \text{with }
 {\cal L}_{k}^{N}:= (\tfrac12\mathrm{I}+\mathrm{K}_{k}^{N}-\mathrm{i}k\mathrm{V}_{k}^{N})^{-1}.
 \end{equation} 
 Hence  $\varphi_{\rm per}^N :=   {\cal L}_{k}^{N} f_{\rm per} $ is the spectrally accurate approximation to the density
 $ \varphi ={\cal L}_{k} f_\Gamma $.  In addition, we can also introduce the associated far-field approximation, see \eqref{eq:far}, 	as  
	\begin{equation}\label{eq:far-z-N}
	\left({\mathcal{F}}_N{\varphi}_{{\rm per}}^N\right)(\widehat{\bm z}):=  \sqrt{\frac{k}{8\pi} }\exp\big(-\tfrac14\pi{\rm i}  \big)\frac{\pi}{N}  \sum_{j=-N+1}^{N} \exp(
	-\mathrm{i} k(\widehat{\bm z}\cdot{\bf x}(t_j))) \big[\widehat{\bm z}\cdot(x_2'(t_j),-x_1'(t_j))+1\big] {\varphi}_{{\rm per}}^N(t_j). 
	\end{equation}

\begin{remark}\label{remark:3.1}It is not difficult to show that
 \[
\varphi^N_{\rm per} = {\cal L}_{k}^{N}f_{\rm per} 
 \quad\Rightarrow\quad
 {\rm Q}_N
\varphi^N_{\rm per}  ={\rm Q}_N{\cal L}_{k}^{N}{\rm Q}_N f_{\rm per} . 
 \]
 \changeSecondRefereeG{We note that only pointwise values of the density $\varphi_{\rm per}$ are used for computation of
 the BEM-based potentials and QoI. Accordingly, we can  conclude that} the true unknowns of the Nystr\"om method are the values of the density at the grid points $\{\varphi_{\rm per}(t_j)\}_{j\in\mathbb{Z}}$ and that such values are computed by the algorithm using only the values of the right-hand-side $f_{\rm per}$ at the same grid.
 \end{remark}

\subsubsection*{The BEM-FEM  coupling method}

Let
\[
\KGammaSigma^N := \gamma_\Sigma \KGammaOmegaOneC^N,\quad
\KSigmaGamma^h := (\gamma_\Gamma \KSigmaOmegaTwo^h)\circ{\bf x},
\]
be the discrete counterparts of $\KGammaSigma$ and $\KSigmaGamma$. That is, the first operator computes the BEM solution and evaluate on the outer polygonal domain $\Sigma$, whereas {the second one} solves the \changeSecondRefereeG{FEM problem and evaluates} it on the interior and smooth $\Gamma$. 

Then our coupled FEM-BEM algorithm is:
\begin{subequations}
\begin{itemize}
\item  Solve \label{eq:NhBEMFEM}
\begin{equation}
\begin{aligned}
 \label{eq:NhBEMFEM:0} 
  (f^h_{\Sigma}, f^N_{\rm per})\in & \gamma_\Sigma \mathbb{P}_{h,d}\times \mathbb{T}_N\quad \text{s.t.}\\
  &
 \left({\cal I}-\begin{bmatrix}
                                       &{\rm Q}^h_{\Sigma} \KGammaSigma^N\\
                                       {\rm Q}_N\KSigmaGamma^h
                                      \end{bmatrix}
\right)\begin{bmatrix}
                          f^h_{\Sigma}\\
                          f^N_{\rm per}
                          \end{bmatrix}=\begin{bmatrix*}[r]
                          {\rm Q}^h_{\Sigma}\gamma_{\Sigma} u^{\rm inc}\\
                          - {\rm Q}_N \gamma_{\Gamma}( u^{\rm inc}\circ{\bf x} )
                          \end{bmatrix*}.
                          \end{aligned}
\end{equation} 

\item Construct 
\begin{equation}
 \label{eq:NhBEMFEM:01}
  w_h=\KSigmaOmegaTwo^h f_\Sigma^h,\quad 
  \omega_N=\KGammaOmegaOneC^Nf^N_{\rm per},\quad u_{h,N}=\begin{cases}
     w_h,\quad&\text{in $\Omega_2$},\\
     \omega_N+u^{\rm inc},\quad&\text{in $\Omega_1^{\rm c}$}.
    \end{cases}
 \end{equation}
\end{itemize}
\end{subequations}

\begin{remark} In view of Remark \ref{remark:3.1}, 
for implementation of the above algorithm, the pointwise values of the numerical solution $(f^h_{\Sigma}, f^N_{\rm per})$ at the  boundary nodes (of the FEM and BEM grids on $\Sigma$ and $\Gamma$) are the  true unknowns. Hence ~\eqref{eq:NhBEMFEM:0} leads to a relatively small algebraic system.

We also note  that, in view of the first-stage coupled and constrained solutions in~\eqref{eq:NhBEMFEM:0},  the overlapped algorithm is designed in such a way that  
for sufficiently fine grids, the final-stage FEM and BEM parts of the algorithm  in \eqref{eq:NhBEMFEM:01}
 lead to two numerically coinciding solutions in $\Omega_{12}=\Omega_1^{\rm c}\cap \Omega_2$. (We demonstrate this property using numerical experiments.)
\end{remark}


%
%
%

\section{Numerical analysis of overlapped FEM-BEM algorithm}~\label{sec:num-anal} 
We analyze the above FEM-BEM scheme through rigorous derivation of the  stability and convergence estimates in appropriate Sobolev norms. For the polygonal region we keep using the standard space $H^s(\Omega_1)$ and $H^s(\Sigma)$ (with the convention \eqref{eq:3.2} for $s>1$). For the smooth boundary $\Gamma$,  since we switch to the parameterized spaced (via ${\bf x}$), we will work with $2\pi-$periodic Sobolev spaces. \changeSecondRefereeG{To this end, we define 
\begin{equation}\label{eq:per_spa}
H^s_{{\rm per}}:=\{\varphi_{{\rm per}}\in  {\cal D}'(\mathbb{R})\ :\  \varphi_{{\rm per}} = \varphi_{{\rm per}}(\,\cdot\,+2\pi),\   \|\varphi_{{\rm per}}\|_{H^s_{{\rm per}}}<\infty\}.
\end{equation}}
\changeV{In~\eqref{eq:per_spa}, the distribution elements in the space are continuous linear functionals on ${\cal D}(\mathbb{R})$ (where
${\cal D}(\mathbb{R})$ is the space of the smooth compactly supported functions in $\mathbb{R}$ endowed with its natural topology), 
and the space is equipped with the norm}
\begin{equation}\label{eq:4.02}
\|\varphi_{{\rm per}}\|_{H^s_{{\rm per}}}^2=|\varphi_{{\rm per}}(0)|^2+\sum_{n\ne 0}|n|^{2s} |\widehat{\varphi}_{{\rm per}}(n)|^2,
\end{equation}
where $\widehat{\varphi}_{{\rm per}}(n)$ is the $n$th Fourier coefficient:
\[
\widehat{\varphi}_{{\rm per}}(n):=
\int_0^{2\pi} {\varphi}_{{\rm per}}(t)e_{-n}(t)\,{\rm d}t. 
\]
It is a well established result that $H^s(\Gamma)$ and $H^s_{\rm per}$ are isomorphic via the composition with ${\bf x}$  (see for example \cite[Th. 8.13]{Kress:2014}).

\subsection{Interpolation and some projection operators on the discrete spaces }

 For the implementation as well as for the theoretical analysis, {we need  some projections onto the discrete spaces defined on} the boundaries $\Sigma$ and $\Gamma$. In the first case we have already introduced   the Lagrange interpolation operator in $\mathbb{P}_{h,d}$  (see \eqref{eq:def:QhSigma}). For such operators it can be shown, as consequence of well-known results, that
\begin{subequations}\label{eq:4.5}
\begin{equation}\label{eq:4.5a}
 \| {\rm Q}^h_{\Sigma} f_\Sigma-f_\Sigma\|_{H^s(\Sigma)}\le C h_\Sigma^{t-s}\|f_\Sigma\|_{H^t(\Sigma)}
\end{equation} 
where $C$ depends only on $\Sigma$,  $s\in [0,1]$ and $s\le  t<d+1$  and $t>1/2$. 
Here and in what follows  $h_\Sigma$ is the maximum of the diameters of   elements in $\Sigma$ induced by  the mesh ${\cal T}_h$. The convergence of order $h_\Sigma^{d+1-s}$ can be attained if we assume an extra regularity for $f_\Sigma$: for any $s\in[0,1]$ and  $t>d+1$, there exists $C$ so that
\begin{equation}\label{eq:4.5b}
 \| {\rm Q}^h_{\Sigma} f_\Sigma-f_\Sigma\|_{H^s(\Sigma)}\le C h_\Sigma^{d+1-s}\|f_\Sigma\|_{H^t(\Sigma)}.
\end{equation} 
\end{subequations} 
We refer the reader to Appendix A for  {proofs} of such results. In this section we also {introduce}
  a more flexible projection on $\gamma_\Sigma \mathbb{P}_h^d$, which will be required for analysis purposes.  
Roughly speaking  this is a consequence of carrying out the analysis in $H^1(\Omega)$ and $H^{1/2}(\Sigma)$ norms which, although are the natural ones for the FEM method, contains discontinuous functions and for which the action of ${\rm Q}^h_{\Sigma}$ cannot be therefore considered.  Hence, we claim that there exists  ${\rm P}_\Sigma^h:H^{1/2}(\Sigma)\to  \gamma_\Sigma \mathbb{P}_{h,d}$  a  projection on $\gamma_\Sigma \mathbb{P}_{h,d}$ satisfying 
\begin{subequations}\label{eq:5.2}
\begin{eqnarray}
\|{\rm P}_\Sigma^hf_\Sigma -  f_\Sigma\|_{H^s(\Sigma)}&\le& C  h_\Sigma^{t-s}\|f_\Sigma\|_{H^{t}(\Sigma)},\quad  0\le s< 1,\quad s\le t< d+1,\quad  t\ge 1/2.
\\
\|{\rm P}_\Sigma^hf_\Sigma -  f_\Sigma\|_{H^s(\Sigma)}&\le& C  h_\Sigma^{d+1-s}\|f_\Sigma\|_{H^{t}(\Sigma)},\quad  0\le s< 1,  \quad t>d+1.
\end{eqnarray}
\end{subequations}   
Proofs of the  estimates \eqref{eq:4.5} and \eqref{eq:5.2}  are given \changeV{in Proposition \ref{prop:A1} and \ref{prop:A2} respectively} in Appendix A. 


The convergence of the trigonometric interpolation operator in the Sobolev frame is a well known result, see for example, \cite[Th. 8.3.1]{SaVa:2002}:
 \begin{equation}\label{eq:5.7}
 \|\mathrm{Q}_N\varphi_{{\rm per}}-\varphi_{{\rm per}}\|_{H_{\rm per}^s}\le C N^{s-t}\|\varphi_{{\rm per}}\|_{H_{\rm per}^t},
\end{equation}
for any $t\ge s\ge 0$ and $t>1/2$, where $C$ depends only on $s,t$. For similar reasons, and  again only for our analysis, we  use {the $L^2$-projection}{, which turns out to be the $H^s$-orthogonal projection for any $s$,}  on $\mathbb{T}_N$: 
\[
\mathbb{T}_N \ni \mathrm{P}_N\varphi_{{\rm per}},\quad\text{s.t.}\quad \widehat{ {\rm P}_N\varphi_{{\rm per}}}(n)
=\widehat{\varphi_{{\rm per}}}(n),\quad n\in \mathbb{Z}_N.  
\]  
It is straightforward to show that for any $t\ge s$, 
\begin{equation}\label{eq:5.6}
 \|\mathrm{P}_N\varphi_{{\rm per}}-\varphi_{{\rm per}}\|_{H_{\rm per}^s}\le N^{s-t}\|\varphi_{{\rm per}}\|_{H_{\rm per}^t}.
\end{equation}

With the help of these two projections we define 
\begin{equation}\label{eq:5.9}
 {\cal K}_{h,N}:=\begin{bmatrix}
                             & {\rm Q}_\Sigma^h \KGammaSigma^N {\rm P}_{N}\\
            {\rm Q}_{N}\KSigmaGamma^h {\rm P}_\Sigma^h&
          \end{bmatrix}
\end{equation}
so that the \eqref{eq:NhBEMFEM:0} can be set up as
\[
 ({\cal I}-{\cal K}_{h,N})\begin{bmatrix}
                          f_{\Sigma}\\
                          f_{{\rm per}}
                          \end{bmatrix}=\begin{bmatrix}
                          g_{\Sigma}\\
                          g_{{\rm per}}
                          \end{bmatrix} 
\]
for appropriate $(g_{\Sigma},g_{{\rm per}})^\top \in H^{1/2}(\Sigma)\times H_{{\rm per}}^{1/2}$. Observe that in the case that the right-hand-side  $(g_{\Sigma}, g_{{\rm per}})$ belongs to the discrete space $\gamma_\Sigma \mathbb{P}_{h,d}\times \mathbb{T}_N$ (as in \eqref{eq:NhBEMFEM:0}), so is the solution $(f_{\Sigma},f_{{\rm per}})$. In this case, ${\rm P}_\Sigma^h$ and ${\rm P}_{N}$ in \eqref{eq:5.9} are already acting on elements on the discrete space and can safely removed. Thus the role of these \changeV{projections is to facilitate} the analysis by setting up the equation in the continuous framework.

\subsection{Convergence for the FEM scheme}

We recall some classical convergence results as well as superconvergence phenomenon for the FEM solution in the following theorem.

 \begin{theorem} \label{theo:5.2} For fixed $g_\Sigma\in \ H^{1/2}(\Sigma)$, $g_\Sigma^h\in \gamma_\Sigma\mathbb{P}_{h,d}$, let
 \[
  u := \KSigmaOmegaTwo g_\Sigma,\quad 
  u_h: = \KSigmaOmegaTwo^h  g_\Sigma^h,
 \]
be  the solution of the interior Helmholtz problem~\eqref{eq:FEM:0} and the approximation given by the FEM \eqref{eq:4.1}. 
Then there exists $C>0$ independent of $g_\Sigma$, $g_\Sigma^h$ and $h$ so that  
\begin{equation}\label{eq:FEMconvergence}
 \|u-u_h\|_{H^1(\changeFirstReferee{\Omega_2})}\le C\Big[ \inf_{v_h\in \mathbb{P}_{h,d}}\|u-v_h\|_{H^1(\changeFirstRefereeG{\Omega_2})}+
 \|g_\Sigma- g_\Sigma^h\|_{H^{1/2}( \Sigma )}
 \Big].
\end{equation}
 
 Furthermore, let  $D'$,  $D$ be domains  with $D'\subset \overline{D}'
\subset D\subset  \overline{D}\subset \Omega_2\setminus\overline{\Omega_0}$ and $\{{\cal T}_h\}_h$ be a sequence of regular grids with $h\to 0$ which are \changeSecondRefereeG{quasi-uniform in $D$, that is 
\begin{equation}\label{eq:quasiuniform}
\min_{\substack{ K \in \mathcal{T}_K \\ K\cap D \neq \emptyset}} h_K \geq c \max_{\substack{ M \in \mathcal{T}_h \\ M \cap D \neq \emptyset}} h_M, 
\end{equation}
for some constant $c$ independent of  levels of discretization.
} Then there exists $\delta\in (1/2,1]$ such that  for any  $\varepsilon\in[0,1/2)$ and  for  any  fine enough grid ${\cal T}_h$,
\begin{equation}\label{eq:FEMSuperconvergence}
  \|u-u_h\|_{H^{1+\varepsilon}(D')}\le  C \big [ (h^{\delta} h^{-\varepsilon}_D +h_D ^{1-\varepsilon} ) \|u-u_h\|_{H^{1}(\changeFirstRefereeG{\Omega_2})} +  h^{-\varepsilon}_D \|g_\Sigma-g_\Sigma^h\|_{L^2(\Sigma)}
     +  h_{D}^{d-\varepsilon} \|u\|_{H^{d+1}(D)}\big]
 \end{equation}
  with  $C>0$ depending on $\varepsilon$,  $D$ and $D'$, and $h_{D}$ being the maximum of the diameters of  the elements of the grid contained in $D$.
\end{theorem}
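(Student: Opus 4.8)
The plan is to treat the two estimates separately, since the first is the standard C\'ea bound for the sign-indefinite Helmholtz problem with inhomogeneous Dirichlet data, while the second is a local (interior) higher-order estimate on the region $D$ where $n\equiv 1$. For the global bound \eqref{eq:FEMconvergence}, I would first record that, for $h$ small enough, the discrete problem \eqref{eq:4.1} inherits a discrete inf--sup condition on $\mathbb{P}_{h,d}\cap H^1_0(\Omega_2)$ from the assumed well-posedness of the continuous interior Dirichlet problem; this is the classical Schatz duality/G\r{a}rding argument for the form $b_{k,n}$, valid once the mesh resolves the wavenumber. Granting this, I would introduce a discrete lifting $\mathcal{E}_h:\gamma_\Sigma\mathbb{P}_{h,d}\to\mathbb{P}_{h,d}$ with $\gamma_\Sigma\mathcal{E}_h\phi=\phi$ and $\|\mathcal{E}_h\phi\|_{H^1(\Omega_2)}\le C\|\phi\|_{H^{1/2}(\Sigma)}$, and for an arbitrary $v_h\in\mathbb{P}_{h,d}$ correct its trace by setting $u_h^\star:=v_h-\mathcal{E}_h(\gamma_\Sigma v_h-g_\Sigma^h)$, so that $\gamma_\Sigma u_h^\star=g_\Sigma^h$ and $u_h-u_h^\star\in\mathbb{P}_{h,d}\cap H^1_0(\Omega_2)$.

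The weak forms give the Galerkin orthogonality $b_{k,n}(u-u_h,v_h)=0$ for every interior test function $v_h\in\mathbb{P}_{h,d}\cap H^1_0(\Omega_2)$. Writing $\chi:=u_h-u_h^\star$ and using $b_{k,n}(\chi,v_h)=b_{k,n}(u-u_h^\star,v_h)$, the discrete inf--sup condition together with continuity of $b_{k,n}$ yields $\|\chi\|_{H^1(\Omega_2)}\le C\|u-u_h^\star\|_{H^1(\Omega_2)}$, whence $\|u-u_h\|_{H^1(\Omega_2)}\le C\|u-u_h^\star\|_{H^1(\Omega_2)}$. Bounding $\|u-u_h^\star\|_{H^1(\Omega_2)}\le\|u-v_h\|_{H^1(\Omega_2)}+\|\mathcal{E}_h(\gamma_\Sigma v_h-g_\Sigma^h)\|_{H^1(\Omega_2)}$ and invoking the lifting bound with the trace estimate $\|\gamma_\Sigma(v_h-u)\|_{H^{1/2}(\Sigma)}\le C\|v_h-u\|_{H^1(\Omega_2)}$ and $\gamma_\Sigma u=g_\Sigma$, I would arrive at $\|u-u_h\|_{H^1(\Omega_2)}\le C(\|u-v_h\|_{H^1(\Omega_2)}+\|g_\Sigma-g_\Sigma^h\|_{H^{1/2}(\Sigma)})$; taking the infimum over $v_h\in\mathbb{P}_{h,d}$ yields \eqref{eq:FEMconvergence}.

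For the interior estimate \eqref{eq:FEMSuperconvergence} I would work on nested domains $D'\subset\subset D''\subset\subset D$ and split $u-u_h=(u-\Pi_h u)+(\Pi_h u-u_h)$ with $\Pi_h$ a quasi-interpolant. Since $n\equiv 1$ on $D$, interior elliptic (indeed analytic) regularity makes $u$ arbitrarily smooth there, so interpolation on the quasi-uniform mesh \eqref{eq:quasiuniform} gives $\|u-\Pi_h u\|_{H^{1+\varepsilon}(D')}\le Ch_D^{d-\varepsilon}\|u\|_{H^{d+1}(D)}$, the third term of \eqref{eq:FEMSuperconvergence}. For the fully discrete difference I would use the inverse inequality on the quasi-uniform part, $\|\Pi_h u-u_h\|_{H^{1+\varepsilon}(D')}\le Ch_D^{-\varepsilon}\|\Pi_h u-u_h\|_{H^1(D'')}$, reducing matters to a local $H^1$ error, and then apply the Nitsche--Schatz interior energy estimate, whose proof rests on a superapproximation property of $\mathbb{P}_{h,d}$, in the form $\|u-u_h\|_{H^1(D'')}\le C(h_D^{d}\|u\|_{H^{d+1}(D)}+h_D\|u-u_h\|_{H^1(\Omega_2)}+\|u-u_h\|_{L^2(\Omega_2)})$, the $h_D$-weighted term arising from superapproximation of the fixed cutoff. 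Finally I would close the global $L^2$ factor by an Aubin--Nitsche duality for the indefinite form with inhomogeneous data, $\|u-u_h\|_{L^2(\Omega_2)}\le C(h^{\delta}\|u-u_h\|_{H^1(\Omega_2)}+\|g_\Sigma-g_\Sigma^h\|_{L^2(\Sigma)})$, where $\delta\in(1/2,1]$ is the elliptic shift limited by the reentrant corners of $\Sigma$. Multiplying through by the $h_D^{-\varepsilon}$ from the inverse inequality converts these into the $h^{\delta}h_D^{-\varepsilon}$, $h_D^{1-\varepsilon}$ and $h_D^{-\varepsilon}\|g_\Sigma-g_\Sigma^h\|_{L^2(\Sigma)}$ contributions, and folding the interpolation terms back in produces \eqref{eq:FEMSuperconvergence}.

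The hard part is the interior estimate: assembling the Nitsche--Schatz local bound with the correct \emph{separate} tracking of the global mesh size $h$ (through the duality exponent $\delta$) and the local mesh size $h_D$ (through the inverse inequality and the superapproximation of the cutoff) is delicate, and the duality argument must be carried out for the sign-indefinite Helmholtz operator with a non-vanishing Dirichlet datum so as to generate the $L^2(\Sigma)$ boundary term and the sharp regularity exponent $\delta\in(1/2,1]$ for the possibly non-convex polygonal $\Omega_2$.
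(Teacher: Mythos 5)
Your proposal is correct and takes essentially the same route as the paper: the global estimate \eqref{eq:FEMconvergence} is the Schatz/discrete-lifting argument that the paper simply delegates to \cite{ScZh:1990,MR2033124}, and the interior estimate \eqref{eq:FEMSuperconvergence} is assembled exactly as in Appendix B (Theorem \ref{theo:A02}, Lemma \ref{lemma:Nitsche-Trick}, Corollary \ref{cor:a.5}) from a cutoff-plus-superapproximation Nitsche--Schatz bound, an inverse inequality paying the $h_D^{-\varepsilon}$, and an Aubin--Nitsche duality with inhomogeneous Dirichlet data yielding the $h^{\delta}$ and $L^2(\Sigma)$ contributions. The only cosmetic difference is that you apply the inverse inequality on an intermediate domain before invoking the $H^1$ interior estimate, whereas the paper applies it inside the localization to the discrete part of the error; the two orderings give the same bound.
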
 
\begin{proof}
 For \eqref{eq:FEMconvergence} we refer to \cite{ScZh:1990} (see also \cite{MR2033124}). 
 Estimate  \eqref{eq:FEMSuperconvergence} can be derived  using the  superconvergence of the FEM solution in the interior of the computational domain where the solution is smooth (actually analytic) cf. \cite{MR0373325}.  We give a proof of these results in {Corollary \ref{cor:a.5} in  Appendix  B
 for the sake of completeness}. 
\end{proof}

Let us point out that the constant $\delta\in(1/2,1]$ in \eqref{eq:FEMSuperconvergence} depends on the regularity of the dual problem with homogeneous Dirichlet condition and a right-hand-side in $L^2(\Omega)$. Therefore, for convex polygonal domains in $\mathbb{R}^2$, we can take $\delta=1$~\cite{Gr:2011}.

\paragraph{Assumption 1} Assume that for some open domain $D\subset \Omega_2\setminus\overline{\Omega}_0$ with $\Gamma\subset D$   there exists $\varepsilon_0>0$ such that  the sequence of grids $\{{\cal T}_h\}_h$ {is quasi-uniform in $D$ and} satisfies  
\[
 h^{1/2}  h_D^{-\varepsilon_0}\to 0
\]
where $h_D$ is the maximum of the diameters of the elements of the grid ${\cal T}_h$ having non-empty intersection with $D$.  \hfill $\Box$ \\[1ex]

We note that this assumption allows locally refined grids but {introduces} a very weak restriction on the ratio between the larger element in $\Omega_2$ and the smaller element in $D$. However, since the exact solution is smooth on $D$, it is reasonable to expect that small elements are not  going to be used in this subdomain.  

\begin{lemma}\label{lemma:5.4}
Let $D$ and the sequence of grids $\{{\cal T}_h\}$ be as in Assumption 1. Then,
\changeFirstRefereeG{for any open subset $D'$ of $D$ containing $\Gamma$ and}
 for any $\varepsilon\in[0,1/2)$,   there exists $C>0$ such that 
\[
\begin{aligned}
 & \hspace{-0.5in} \|\KSigmaGamma f_\Sigma - \KSigmaGamma^h {\rm P}^h_{\Sigma}f_\Sigma   \|_{H^{1/2+\varepsilon}_{{\rm per}}}\\
  \le&\   C h^{1/2}  h_D^{-\varepsilon}\big[ \inf_{v_h\in\mathbb{P}_{h,d}}\|\KSigmaOmegaTwo  f_\Sigma-v_h\|_{\changeFirstRefereeG{H^1(\Omega_2)}}
 +\|  f_\Sigma-{\rm P}^h_{\Sigma}f_\Sigma  \|_{H^{1/2}(\Sigma)}\big]
 + C h_D^{d-\varepsilon}\| f_\Sigma\|_{L^2(\Sigma)}.
\end{aligned}
 \]
\end{lemma}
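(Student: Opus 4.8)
The plan is to bound the quantity $\|\KSigmaGamma f_\Sigma - \KSigmaGamma^h {\rm P}^h_{\Sigma}f_\Sigma\|_{H^{1/2+\varepsilon}_{\rm per}}$ by transferring it, via the isomorphism between $H^s(\Gamma)$ and $H^s_{\rm per}$ (noted after \eqref{eq:4.02}), to the trace of a FEM error on the smooth curve $\Gamma$. Recall $\KSigmaGamma f_\Sigma = \gamma_\Gamma \KSigmaOmegaTwo f_\Sigma$ and $\KSigmaGamma^h {\rm P}^h_{\Sigma} f_\Sigma = (\gamma_\Gamma \KSigmaOmegaTwo^h {\rm P}^h_{\Sigma} f_\Sigma)\circ {\bf x}$. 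Writing $u := \KSigmaOmegaTwo f_\Sigma$ and $u_h := \KSigmaOmegaTwo^h {\rm P}^h_{\Sigma} f_\Sigma$, the target is essentially $\|\gamma_\Gamma(u - u_h)\|_{H^{1/2+\varepsilon}(\Gamma)}$ up to the parameterization isomorphism, so the whole estimate reduces to controlling the FEM error $u - u_h$ in a strong norm near $\Gamma$.

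\textbf{Key steps.} First I would invoke the Sobolev trace theorem: since $\Gamma$ is smooth and $\Gamma \subset D' \subset D \subset \Omega_2 \setminus \overline{\Omega_0}$, the trace operator $\gamma_\Gamma : H^{1+\varepsilon}(D') \to H^{1/2+\varepsilon}(\Gamma)$ is continuous (this is the regime $s = 1/2+\varepsilon \in (0,1)$ covered by the trace theorem cited in the excerpt). Hence
\[
\|\gamma_\Gamma(u-u_h)\|_{H^{1/2+\varepsilon}(\Gamma)} \le C\,\|u - u_h\|_{H^{1+\varepsilon}(D')}.
\]
Next I would apply the interior superconvergence estimate \eqref{eq:FEMSuperconvergence} of Theorem~\ref{theo:5.2} on the nested pair $D' \subset D$, which bounds $\|u - u_h\|_{H^{1+\varepsilon}(D')}$ by the three contributions: the global $H^1$ error times $(h^\delta h_D^{-\varepsilon} + h_D^{1-\varepsilon})$, the boundary data mismatch $h_D^{-\varepsilon}\|g_\Sigma - g_\Sigma^h\|_{L^2(\Sigma)}$, and the local best-approximation term $h_D^{d-\varepsilon}\|u\|_{H^{d+1}(D)}$. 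Here the relevant data pair is $g_\Sigma = f_\Sigma$ and $g_\Sigma^h = {\rm P}^h_{\Sigma} f_\Sigma$, so the middle term becomes $h_D^{-\varepsilon}\|f_\Sigma - {\rm P}^h_{\Sigma} f_\Sigma\|_{L^2(\Sigma)}$. I would then bound the global $H^1$ error by the basic estimate \eqref{eq:FEMconvergence}, which gives the infimum plus $\|f_\Sigma - {\rm P}^h_{\Sigma} f_\Sigma\|_{H^{1/2}(\Sigma)}$. The final term $h_D^{d-\varepsilon}\|u\|_{H^{d+1}(D)}$ is rewritten using the elliptic-regularity/continuity of $\KSigmaOmegaTwo$ on the interior subdomain $D$ (where $u$ is analytic, being a homogeneous Helmholtz solution away from $\Omega_0$), so that $\|u\|_{H^{d+1}(D)} \le C\|f_\Sigma\|_{L^2(\Sigma)}$, producing the stated $C h_D^{d-\varepsilon}\|f_\Sigma\|_{L^2(\Sigma)}$ term.

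\textbf{Reconciling the prefactor.} The remaining step is to collapse the prefactor $(h^\delta h_D^{-\varepsilon} + h_D^{1-\varepsilon})$ multiplying the $H^1$-error into the clean factor $h^{1/2} h_D^{-\varepsilon}$ claimed in the statement. Since $\delta \in (1/2,1]$ we have $h^\delta \le h^{1/2}$ for $h \le 1$, which handles the first summand; for the second summand one uses $h_D \le h$ together with Assumption~1's regime $h^{1/2} h_D^{-\varepsilon_0} \to 0$ to absorb $h_D^{1-\varepsilon}$ into $C h^{1/2} h_D^{-\varepsilon}$ for fine enough grids (here the quasi-uniformity of the mesh on $D$ is what makes $h_D$ a meaningful single scale). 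Finally, I would note $\|f_\Sigma - {\rm P}^h_{\Sigma} f_\Sigma\|_{L^2(\Sigma)} \le \|f_\Sigma - {\rm P}^h_{\Sigma} f_\Sigma\|_{H^{1/2}(\Sigma)}$ to merge the data-mismatch contributions into the single bracketed term.

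\textbf{Main obstacle.} The delicate point is the bookkeeping of the mesh scales $h$ (global) versus $h_D$ (local in $D$) in the prefactor, and in particular verifying that Assumption~1 is exactly what licenses absorbing the $h_D^{1-\varepsilon}$ term — one must check the inequality $h_D^{1-\varepsilon} \le C h^{1/2} h_D^{-\varepsilon}$, i.e. $h_D \le C h^{1/2}$, which is not automatic and is precisely where the quasi-uniformity on $D$ plus the asymptotic $h^{1/2} h_D^{-\varepsilon_0} \to 0$ must be deployed. Getting the exponents to line up so that the superconvergence gain (governed by $\delta > 1/2$) survives after taking the trace and after inserting the global $H^1$ bound is the part that requires genuine care rather than routine estimation.
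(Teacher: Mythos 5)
Your route is the same as the paper's: compose the trace bound $\gamma_\Gamma:H^{1+\varepsilon}(D')\to H^{1/2+\varepsilon}(\Gamma)\sim H^{1/2+\varepsilon}_{\rm per}$ with the interior superconvergence estimate \eqref{eq:FEMSuperconvergence} on $D'\subset D$, feed the global $H^1$ error through \eqref{eq:FEMconvergence}, and use interior regularity of the homogeneous Helmholtz solution to get $\|\KSigmaOmegaTwo f_\Sigma\|_{H^{d+1}(D)}\le C\|f_\Sigma\|_{L^2(\Sigma)}$. All of that matches the paper. (One minor mis-attribution: absorbing $h_D^{1-\varepsilon}$ into $Ch^{1/2}h_D^{-\varepsilon}$ does not require Assumption~1 — it is simply $h_D\le h\le h^{1/2}$ for $h\le 1$; Assumption~1 is deployed later, in Proposition~\ref{prop:5.4} and the stability theorem.)

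There is, however, one genuine gap: your handling of the data-mismatch term $h_D^{-\varepsilon}\|g_\Sigma-g_\Sigma^h\|_{L^2(\Sigma)}=h_D^{-\varepsilon}\|f_\Sigma-{\rm P}^h_\Sigma f_\Sigma\|_{L^2(\Sigma)}$ produced by \eqref{eq:FEMSuperconvergence}. You bound it with the trivial embedding $\|\cdot\|_{L^2(\Sigma)}\le\|\cdot\|_{H^{1/2}(\Sigma)}$, which only gives $h_D^{-\varepsilon}\|f_\Sigma-{\rm P}^h_\Sigma f_\Sigma\|_{H^{1/2}(\Sigma)}$; this exceeds the claimed contribution $Ch^{1/2}h_D^{-\varepsilon}\|f_\Sigma-{\rm P}^h_\Sigma f_\Sigma\|_{H^{1/2}(\Sigma)}$ by a factor $h^{-1/2}$, so you would only establish a strictly weaker lemma, and the loss would propagate into the final convergence theorem. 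The missing ingredient is the extra half power of $h$ gained from the projection property of ${\rm P}^h_\Sigma$: since ${\rm P}^h_\Sigma$ is a projection, $f_\Sigma-{\rm P}^h_\Sigma f_\Sigma=({\rm I}-{\rm P}^h_\Sigma)(f_\Sigma-{\rm P}^h_\Sigma f_\Sigma)$, and applying the approximation estimate \eqref{eq:5.2} with $s=0$, $t=1/2$ to the right-hand side yields
\[
\|f_\Sigma-{\rm P}^h_\Sigma f_\Sigma\|_{L^2(\Sigma)}\le C\,h_\Sigma^{1/2}\,\|f_\Sigma-{\rm P}^h_\Sigma f_\Sigma\|_{H^{1/2}(\Sigma)}\le C\,h^{1/2}\,\|f_\Sigma-{\rm P}^h_\Sigma f_\Sigma\|_{H^{1/2}(\Sigma)},
\]
which is precisely the first line of the paper's proof and the one step your argument cannot do without.
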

\begin{proof}
Notice that 
\[
 \|f_\Sigma  -  {\rm P}^h_{\Sigma}f_\Sigma  \|_{L^2(\Sigma)}=
  \|f_\Sigma  -  {\rm P}^h_{\Sigma}f_\Sigma-  {\rm P}^h_{\Sigma}  (f_\Sigma  -  {\rm P}^h_{\Sigma}{f_\Sigma})\|_{L^2(\Sigma)}\le 
 C h^{1/2}\|f_\Sigma  -  {\rm P}^h_{\Sigma}f_\Sigma  \|_{H^{1/2}(\Sigma)}.
\]
Take  $ D'$ so that $\Gamma\subset D'\subset \overline{D}'\subset D$.  We claim that there exist $C$ independent of $f_\Sigma$, so that
\begin{equation}\label{eq:4.13}
 \|\KSigmaD f_\Sigma\|_{H^{d+1}(D)}\le C \|f_\Sigma\|_{L^2(\Sigma)}.
\end{equation}
This can be seen as consequence of that the differential equations in $D$ becomes the homogeneous Helmholtz equation and $\Gamma$ is sufficiently far away from \changeFirstReferee{the boundary of} $D$. 

The continuity of the trace operator $\gamma_\Gamma: H^{1+\varepsilon}(D')\to H^{1/2+\varepsilon}(\Gamma)\sim H^{1/2+\varepsilon}_{{\rm per}}$ and a direct application of  (\ref{eq:FEMconvergence}-\ref{eq:FEMSuperconvergence}) yield 
\[
\begin{aligned}
 \|\KSigmaGamma f_\Sigma - &\KSigmaGamma^h {\rm P}^h_{\Sigma}f_\Sigma   \|_{H^{1/2+\varepsilon}_{{\rm per}}}\\
  \le&\   C( h^{\delta}  h_D^{-\varepsilon}+h_D^{1-\varepsilon}+
  h ^{1/2-\varepsilon})\big[ \inf_{v_h\in\mathbb{P}_{h,d}}\|\KSigmaOmegaTwo  f_\Sigma-v_h\|_{H^1( \Omega_2 )}
 +\|  f_\Sigma-{\rm P}^h_{\Sigma}f_\Sigma  \|_{H^{1/2}(\Sigma)}\big]\\
 & \ \ + C h_D^{d-\varepsilon}\| f_\Sigma\|_{L^2(\Sigma)}.
\end{aligned}
 \]
Using that $\delta \ge 1/2$ the result is proven.
\end{proof}

We are ready to prove the convergence, in operator norm, of the first off diagonal block in \eqref{eq:5.9} to the corresponding one in \eqref{eq:BEMFEM:0b}

\begin{proposition}\label{prop:5.4}
For any  $0< \varepsilon\le \varepsilon_0$ and for any $t\ge 0$ there exists $C>0$ independent of $f_\Sigma$, $h$ and $N$ such that 
\begin{equation}
  \label{eq:01:prop:5.5}
  \begin{aligned}
\|\KSigmaGamma f_\Sigma &- {\rm Q}_{N}\KSigmaGamma^h   {\rm P}^h_{\Sigma}f_\Sigma   \|_{H^{1/2 }_{{\rm per}}}\\
\le &\  
C \big(1+N^{ -\varepsilon } h_{D}^{-\varepsilon }\big)h^{1/2} \big[ \inf_{v_h\in\mathbb{P}_{h,d}}\|\KSigmaOmegaTwo  f_\Sigma-v_h\|_{H^1(\Omega_2)}
 +\|  f_\Sigma- {\rm P}^h_{\Sigma}f_\Sigma   \|_{H^{1/2}(\Sigma)}\big]\\
 & \ \ + C (N^{ -\varepsilon } h_D^{d-\varepsilon}+h_D^{d}+ N^{-t})\| f_\Sigma\|_{L^2(\Sigma)}.
\end{aligned}
\end{equation}
In particular,  we have the convergence in operator norm:
\begin{equation}\label{eq:01:prop:5.5b}
 \|     \KSigmaGamma -{\rm Q}_{N}\KSigmaGamma^h   {\rm P}^h_{\Sigma}  \|_{H^{1/2}(\Sigma)\to H^{1/2}_{{\rm per}}}\to 0\quad \text{as }(N,h)\to (\infty,0).
\end{equation}
\end{proposition}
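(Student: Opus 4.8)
The plan is to obtain \eqref{eq:01:prop:5.5} by reducing everything to Lemma~\ref{lemma:5.4}, which already controls the \emph{non-interpolated} error in the stronger norm $H^{1/2+\varepsilon}_{\rm per}$, and then to insert the trigonometric interpolation operator ${\rm Q}_N$ through a telescoping decomposition. First I would write, with $e := \KSigmaGamma f_\Sigma - \KSigmaGamma^h {\rm P}^h_\Sigma f_\Sigma$,
\[
\KSigmaGamma f_\Sigma - {\rm Q}_{N}\KSigmaGamma^h {\rm P}^h_{\Sigma} f_\Sigma = (\mathrm{I} - {\rm Q}_{N})\KSigmaGamma f_\Sigma + {\rm Q}_{N} e .
\]
The first term involves only the $\Gamma$-trace of the homogeneous interior solution, which is analytic in a neighbourhood of $\Gamma$. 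Combining \eqref{eq:4.13} with interior elliptic regularity of the homogeneous Helmholtz equation on nested subdomains of $D$, I get $\|\KSigmaGamma f_\Sigma\|_{H^t_{\rm per}} \le C_t \|f_\Sigma\|_{L^2(\Sigma)}$ for every $t\ge 0$, so \eqref{eq:5.7} yields $\|(\mathrm{I}-{\rm Q}_{N})\KSigmaGamma f_\Sigma\|_{H^{1/2}_{\rm per}} \le C_t N^{-t}\|f_\Sigma\|_{L^2(\Sigma)}$ for arbitrary $t\ge 0$, which accounts for the $N^{-t}$ contribution.

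The crux is the term ${\rm Q}_N e$. Here I would exploit that, although ${\rm Q}_N$ is \emph{not} uniformly bounded on $H^{1/2}_{\rm per}$, the $L^2$-orthogonal projection ${\rm P}_N$ is non-expansive in every $H^s_{\rm per}$. Accordingly I split ${\rm Q}_N e = {\rm P}_N e + ({\rm Q}_N - {\rm P}_N) e$. For the first piece, non-expansiveness and Lemma~\ref{lemma:5.4} \emph{at} $\varepsilon = 0$ give $\|{\rm P}_N e\|_{H^{1/2}_{\rm per}} \le \|e\|_{H^{1/2}_{\rm per}} \le C h^{1/2}[\,\cdots\,] + C h_D^{d}\|f_\Sigma\|_{L^2(\Sigma)}$, which produces exactly the unweighted (factor-one) terms of \eqref{eq:01:prop:5.5}. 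For the second piece, the triangle inequality through the identity together with the interpolation and projection estimates \eqref{eq:5.7}--\eqref{eq:5.6} at $(s,t)=(1/2,1/2+\varepsilon)$ gives $\|({\rm Q}_N - {\rm P}_N)e\|_{H^{1/2}_{\rm per}} \le C N^{-\varepsilon}\|e\|_{H^{1/2+\varepsilon}_{\rm per}}$; applying Lemma~\ref{lemma:5.4} now \emph{with} $\varepsilon>0$ supplies the $N^{-\varepsilon}$-weighted terms $C N^{-\varepsilon}(h^{1/2}h_D^{-\varepsilon}[\,\cdots\,] + h_D^{d-\varepsilon}\|f_\Sigma\|_{L^2(\Sigma)})$. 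Adding the three contributions yields \eqref{eq:01:prop:5.5}.

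For the operator-norm convergence \eqref{eq:01:prop:5.5b}, I would bound the bracket $\inf_{v_h\in\mathbb{P}_{h,d}}\|\KSigmaOmegaTwo f_\Sigma - v_h\|_{H^1(\Omega_2)} + \|f_\Sigma - {\rm P}^h_\Sigma f_\Sigma\|_{H^{1/2}(\Sigma)}$ and the term $\|f_\Sigma\|_{L^2(\Sigma)}$ each by $C\|f_\Sigma\|_{H^{1/2}(\Sigma)}$, using continuity of the interior Dirichlet solution operator $\KSigmaOmegaTwo\colon H^{1/2}(\Sigma)\to H^1(\Omega_2)$ (taking $v_h=0$) and $H^{1/2}$-stability of ${\rm P}^h_\Sigma$ from \eqref{eq:5.2}. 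This turns \eqref{eq:01:prop:5.5} into an operator-norm bound by $C\big[(1+N^{-\varepsilon}h_D^{-\varepsilon})h^{1/2} + N^{-\varepsilon}h_D^{d-\varepsilon} + h_D^{d} + N^{-t}\big]$. Letting $(N,h)\to(\infty,0)$: since $h_D\le h$, the terms $h^{1/2}$, $h_D^{d}$ vanish; $N^{-t}\to 0$ and $N^{-\varepsilon}h_D^{d-\varepsilon}\to 0$ because $d-\varepsilon>0$; and the delicate factor $h^{1/2}h_D^{-\varepsilon}$ is controlled by Assumption~1, writing $h^{1/2}h_D^{-\varepsilon}=h^{1/2}h_D^{-\varepsilon_0}\,h_D^{\varepsilon_0-\varepsilon}\to 0$ for $\varepsilon\le\varepsilon_0$ (as $h_D$ stays bounded and $h^{1/2}h_D^{-\varepsilon_0}\to 0$), the extra $N^{-\varepsilon}\le 1$ only helping.

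The main obstacle is precisely that ${\rm Q}_N$ fails to be uniformly bounded on the natural energy-trace norm $H^{1/2}_{\rm per}$, so one cannot simply pass ${\rm Q}_N$ across $e$ and invoke Lemma~\ref{lemma:5.4}. The ${\rm P}_N/({\rm Q}_N-{\rm P}_N)$ splitting is what resolves this: the projection part is handled non-expansively at $\varepsilon=0$ to retrieve the sharp (unweighted) orders, while the interpolation remainder buys a factor $N^{-\varepsilon}$ that exactly compensates the $h_D^{-\varepsilon}$ blow-up appearing in Lemma~\ref{lemma:5.4} for $\varepsilon>0$. Recognizing that Lemma~\ref{lemma:5.4} must be used at \emph{both} $\varepsilon=0$ and $\varepsilon>0$, and then coupling the two parameters via Assumption~1 in the limit, is the key to the stated estimate and its convergence.
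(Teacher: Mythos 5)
Your proposal is correct and follows essentially the same route as the paper: isolate the spectrally small term $(\mathrm{I}-{\rm Q}_N)\KSigmaGamma f_\Sigma$ via \eqref{eq:4.13} and \eqref{eq:5.7}, and control the remaining ${\rm Q}_N e$ by applying Lemma~\ref{lemma:5.4} twice, once at $\varepsilon=0$ for the unweighted contribution and once at $\varepsilon>0$ to pair the $N^{-\varepsilon}$ gain from interpolation against the $h_D^{-\varepsilon}$ loss. The only (immaterial) difference is that you split ${\rm Q}_N e={\rm P}_N e+({\rm Q}_N-{\rm P}_N)e$ whereas the paper writes ${\rm Q}_N e=e-(\mathrm{I}-{\rm Q}_N)e$; your treatment of the operator-norm limit via Assumption~1 is also the intended one.
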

\begin{proof}
The identity 
\begin{equation}\label{eq:5:13}
 \KSigmaGamma-
 {\rm Q}_{N}\KSigmaGamma^h {\rm P}^h_{\Sigma}       = \big(
{\rm I}-{\rm Q}_{N}\big) \KSigmaGamma+
 ({\rm I}-{\rm Q}_{N})\big(\KSigmaGamma^h {\rm P}^h_{\Sigma}   -  \KSigmaGamma \big)  +\big(\KSigmaGamma-\KSigmaGamma^h {\rm P}^h_{\Sigma}  \big), 
\end{equation}
and the estimates
\begin{eqnarray*}
\|  ({\rm I}-{\rm Q}_{N}) \KSigmaGamma f_\Sigma\|_{H^{1/2 }_{{\rm per}}}&\le& 
C N^{-t}\|\KSigmaGamma f_\Sigma\|_{H^{t+1/2 }_{{\rm per}}}\le 
C'N^{-t}\|f_\Sigma\|_{\changeV{L^2(\Sigma)}}\\
 \| ({\rm I}-{\rm Q}_{N})(\KSigmaGamma^h {\rm P}^h_{\Sigma}   -  \KSigmaGamma \big)f_\Sigma\|_{H^{1/2}_{{\rm per}}}&\le& 
 C_{\varepsilon_1} N^{ -{\varepsilon_1} }\|(\KSigmaGamma^h {\rm P}^h_{\Sigma}   -  \KSigmaGamma \big)f_\Sigma\|_{H^{1/2+{\varepsilon_1}}_{{\rm per}}},
 \end{eqnarray*}
which are consequences of \eqref{eq:5.7} and \eqref{eq:4.13}, yield
\begin{multline*}
\|   \KSigmaGamma f_\Sigma-{\rm Q}_{N}\KSigmaGamma^h {\rm P}^h_{\Sigma} f_\Sigma \|_{H^{1/2 }_{{\rm per}}}\\
\le   
 C_\varepsilon N^{ -\varepsilon_1}\|(\KSigmaGamma^h {\rm P}^h_{\Sigma}   -  \KSigmaGamma \big)f_\Sigma\|_{H^{1/2+\varepsilon_1}_{{\rm per}}} + C'N^{-t}\|f_\Sigma\|_{\changeV{L^2(\Sigma)}}
 + \|(\KSigmaGamma^h {\rm P}^h_{\Sigma}   -  \KSigmaGamma \big)f_\Sigma\|_{H^{1/2}_{{\rm per}}}
 \end{multline*}
Applying Lemma \ref{lemma:5.4} twice (with $\varepsilon=\varepsilon_1$ for the first term and $\varepsilon =0$ for the third one) yield \eqref{eq:01:prop:5.5}.  
Consequently, \eqref{eq:01:prop:5.5b} follows. 
\end{proof}

\subsection{Convergence for the BEM scheme}

The  inverse inequality
\begin{equation}\label{eq:inverseIneq}
\| \varphi_{{\rm per}}^N\|_{H^t_{{\rm per}}}\le N^{t-s}\| \varphi_{{\rm per}}^N\|_{H^s_{{\rm per}}},\quad   t\ge s,\quad  \forall  \varphi_{{\rm per}}^N\in\mathbb{T}_N, 
\end{equation}
that is straightforward to derive from \eqref{eq:def:Tn} and \eqref{eq:4.02}, will be used repeatedly in this section. 
%
The first result in this subsection summarizes the convergence of the BEM solver in a format that will be used later. This is based on the convergence in norm of the approximation operator ${\cal L}_k^N$ to the continuous counterpart. We recall that ${\cal L }_k:H^s_{{\rm per}}\to H^{s}_{{\rm per}}$ is continuous for any $s\in \mathbb{R}$.  

\begin{theorem}\label{Th:01}
Fix  $t\ge s >1/2$. Then,  there exists $C>0$ such that for any $N$ large enough and for any $f_{\rm per}\in H^t_{\rm per}$,
 \begin{equation}\label{eq:01:Th:01}
  \|{\cal L}_{k}f_{\rm per}-{\cal L}_{k}^{N}f_{\rm per}   \|_{H_{\rm per}^s}\le C N^{s-t-\alpha}\|f_{\rm per} \|_{H_{\rm per}^t}, \qquad  \text{with }\alpha=\min\{s,1\}.
 \end{equation}
Therefore, ${\cal L}_{k}^{N}:H^s_{{\rm per}}\to H^{s}_{{\rm per}}$ is uniformly continous in $N$ for any $s>1/2$.

 Moreover, for $s,t\ge 0$, with $t\ge \max\{s-1,0\}$ there exits $C>0$ such that, for $N$ large enough, 
\begin{equation}\label{eq:01b:Th:01}
  \| \mathrm{Q}_N{\cal L}_{k}\mathrm{P}_Nf_{\rm per}-\mathrm{Q}_N{\cal L}_{k}^{N}\mathrm{P}_Nf_{\rm per}\|_{H_{\rm per}^s}\le
  C N^{s-t-\alpha}\|{\mathrm{P}_Nf_{\rm per}}\|_{H_{\rm per}^t}\le C N^{s-t-\alpha}\|f_{\rm per}\|_{H_{\rm per}^t}. 
 \end{equation}

\end{theorem}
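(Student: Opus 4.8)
The plan is to view the Nyström scheme as a perturbation of the (boundedly invertible) Brakhage--Werner operator. Write $\mathrm{A}_k:=\tfrac12\mathrm{I}+\mathrm{K}_k-\mathrm{i}k\mathrm{V}_k$ and $\mathrm{A}_k^N:=\tfrac12\mathrm{I}+\mathrm{K}_k^N-\mathrm{i}k\mathrm{V}_k^N$, so that ${\cal L}_k=\mathrm{A}_k^{-1}$ and ${\cal L}_k^N=(\mathrm{A}_k^N)^{-1}$, and note that the identity part is reproduced exactly by the quadrature, whence $\mathrm{A}_k^N-\mathrm{A}_k=(\mathrm{K}_k^N-\mathrm{K}_k)-\mathrm{i}k(\mathrm{V}_k^N-\mathrm{V}_k)$. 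The whole argument rests on the resolvent identity
\[
{\cal L}_k-{\cal L}_k^N={\cal L}_k\,(\mathrm{A}_k^N-\mathrm{A}_k)\,{\cal L}_k^N,
\]
combined with three facts: continuity of ${\cal L}_k$ on every $H^s_{\rm per}$ (recalled just before the statement); a consistency estimate for $\mathrm{A}_k^N-\mathrm{A}_k$; and uniform-in-$N$ boundedness of ${\cal L}_k^N$. I would prove consistency first, deduce stability from it, and then read off \eqref{eq:01:Th:01}.

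For consistency I would establish, for $t\ge s>1/2$,
\[
\|(\mathrm{A}_k-\mathrm{A}_k^N)\varphi_{\rm per}\|_{H^s_{\rm per}}\le C\,N^{s-t-\alpha}\,\|\varphi_{\rm per}\|_{H^t_{\rm per}},\qquad \alpha=\min\{s,1\}.
\]
Since the smooth parts of $\mathrm{V}_k^N,\mathrm{K}_k^N$ reduce to the trapezoidal rule, the error is assembled entirely from interpolation errors $(\mathrm{I}-\mathrm{Q}_N)(A(s,\cdot)\varphi_{\rm per})$, $(\mathrm{I}-\mathrm{Q}_N)(B(s,\cdot)\varphi_{\rm per})$ and the analogous $C,D$ terms; as $A,B,C,D$ are smooth and biperiodic, multiplication by them preserves $H^t_{\rm per}$, so \eqref{eq:5.7} controls each factor. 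The logarithmic kernel supplies one order of smoothing, and this is the source of $\alpha$: for $s\ge1$ one places the interpolation error in $H^{s-1}_{\rm per}$ and exploits the full gain, producing $N^{s-1-t}$, whereas for $s<1$ the target norm is majorised by the $H^1_{\rm per}$ norm and the error measured in $L^2$, producing $N^{-t}=N^{s-t-\alpha}$. The hypothesis $t>1/2$ is exactly what makes $\mathrm{Q}_N$ and \eqref{eq:5.7} legitimate.

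Taking $t=s$ above gives $\|\mathrm{A}_k-\mathrm{A}_k^N\|_{H^s_{\rm per}\to H^s_{\rm per}}\le CN^{-\alpha}\to0$. Because $\mathrm{A}_k$ is boundedly invertible on $H^s_{\rm per}$ (the reason the combined-field formulation is used, giving invertibility for every $k>0$), a Neumann-series perturbation shows that for $N$ large $\mathrm{A}_k^N$ is invertible with $\|{\cal L}_k^N\|_{H^s_{\rm per}\to H^s_{\rm per}}\le 2\|{\cal L}_k\|_{H^s_{\rm per}\to H^s_{\rm per}}$; this is the asserted uniform continuity. Inserting it into the resolvent identity, $\|({\cal L}_k-{\cal L}_k^N)f_{\rm per}\|_{H^s_{\rm per}}\le\|{\cal L}_k\|\,\|(\mathrm{A}_k^N-\mathrm{A}_k){\cal L}_k^N f_{\rm per}\|_{H^s_{\rm per}}\le CN^{s-t-\alpha}\|{\cal L}_k^N f_{\rm per}\|_{H^t_{\rm per}}\le C'N^{s-t-\alpha}\|f_{\rm per}\|_{H^t_{\rm per}}$, the final step using stability on $H^t_{\rm per}$ (valid as $t\ge s>1/2$). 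This proves \eqref{eq:01:Th:01}.

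For the fully discrete bound \eqref{eq:01b:Th:01}, set $g:=\mathrm{P}_N f_{\rm per}\in\mathbb{T}_N$ and $w:=({\cal L}_k-{\cal L}_k^N)g$, and split $\|\mathrm{Q}_N w\|_{H^s_{\rm per}}\le\|(\mathrm{Q}_N-\mathrm{I})w\|_{H^s_{\rm per}}+\|w\|_{H^s_{\rm per}}$. Both summands reduce to \eqref{eq:01:Th:01} by raising the index of the trigonometric polynomial $g$ through the inverse inequality \eqref{eq:inverseIneq} (which costs $N^{\sigma-t}$ while $\|g\|_{H^t_{\rm per}}\le\|f_{\rm per}\|_{H^t_{\rm per}}$): applying \eqref{eq:01:Th:01} at an auxiliary index $\sigma=\max\{s,1\}>1/2$ and then \eqref{eq:5.7} bounds the interpolation term by $CN^{s-t-1}$, while the second summand is handled by \eqref{eq:01:Th:01} directly when $s>1/2$ and by the monotonicity $\|w\|_{H^s_{\rm per}}\le\|w\|_{H^1_{\rm per}}$ when $s\le1/2$; in both cases one lands on $N^{s-t-\alpha}$, and the requirement $t\ge\max\{s-1,0\}$ is precisely what keeps these exponents non-positive. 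The main obstacle is the consistency estimate with the sharp gain $\alpha=\min\{s,1\}$: the quadrature-error bookkeeping for the logarithmic and smooth parts is the technical heart, whereas stability then comes cheaply because the periodic log/smooth splitting delivers genuine operator-norm convergence rather than merely collectively compact convergence.
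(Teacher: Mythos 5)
Your argument is correct and follows essentially the same route as the paper: the resolvent identity ${\cal L}_k-{\cal L}_k^N={\cal L}_k(\mathrm{A}_k^N-\mathrm{A}_k){\cal L}_k^N$ (the paper writes the factors in the opposite, equally valid, order), operator-norm consistency of $\mathrm{K}_k^N-\mathrm{K}_k$ and $\mathrm{V}_k^N-\mathrm{V}_k$ with the gain $\alpha=\min\{s,1\}$ (which the paper simply cites from Kress and Dom\'{\i}nguez--Turc rather than sketching), Neumann-series stability of ${\cal L}_k^N$, and, for \eqref{eq:01b:Th:01}, the same two-case device of measuring the error at the elevated index $\max\{s,1\}$ and returning to $H^t_{\rm per}$ via the inverse inequality on $\mathrm{P}_Nf_{\rm per}\in\mathbb{T}_N$. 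No gaps.
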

\begin{proof}
The estimates, recall \eqref{eq:4.2} and \eqref{eq:4.3}, 
\[
 \|\mathrm{K}_{k}^{N}-\mathrm{K}_{k}\|_{H_{\rm per}^t\to H_{\rm per}^s}
 + \|\mathrm{V}_{k}^{N}-\mathrm{V}_{k}\|_{H_{\rm per}^t\to H_{\rm per}^s}\le C N^{s-t-\alpha}
\]
with $s,\ t,\ \alpha$ as in the statement of the Theorem (see Chapter 12 and 13 in \cite{Kress:2014} or, for a more detailed proof, \cite[Th. 3.1]{MR3526814}) proves that $ {\cal L}_{k}^{N}:H_{\rm per}^s\to H_{\rm per}^s$ for $s>1/2$ is well defined, for $N$ large enough, and it is uniformly bounded. Estimate \eqref{eq:01:Th:01} follows now from the identity 
\[
 {\cal L}_{k}-{\cal L}_{k}^{N}= 
  {\cal L}_{k}^N\big[  \big({\cal L}_{k}^N)^{-1} -  {\cal L}_{k}^{-1}\big] {\cal L}_{k}=  
 {\cal L}_{k}^N\big[(\mathrm{K}^{N}_{k}-\mathrm{K}_{k})-{\rm i}k
(\mathrm{V}_{k}^{N}-\mathrm{V}_{k})\big] {\cal L}_{k}.
\]

To prove the second estimate \eqref{eq:01b:Th:01}, we proceed in two steps: For $s\in[1,\infty)$, $t\ge s-1$ we have
\begin{eqnarray}
   \|\mathrm{Q}_N{\cal L}_{k}^{N}\mathrm{P}_Nf_{\rm per}-\mathrm{Q}_N{\cal L}_{k}\mathrm{P}_Nf_{\rm per}\|_{H_{\rm per}^s}&\le&
   C\| ({\cal L}_{k}^{N}-{\cal L}_{k})\mathrm{P}_Nf_{\rm per}\|_{H_{\rm per}^s}\le C' N^{s-t-2}\|\mathrm{P}_Nf_{\rm per}\|_{H_{\rm per}^{t+1}}\nonumber\\
   &\le&   C' N^{s-t-1}\|\mathrm{P}_Nf_{\rm per}\|_{H_{\rm per}^{{t}}}.\label{eq:5.17}
\end{eqnarray} 
(Notice that in the last step we have used the inverse inequality \eqref{eq:inverseIneq}.) 
On the other hand, for $s\in[0,1]$ and $t\ge 0$, we make use of the bound $\|{\rm Q}_N g
_{{\rm per}}\|_{H_{\rm per}^0}\le C \|g_{{\rm per}}\|_{H_{\rm per}^1}$ to derive 
\begin{eqnarray}
 \|\mathrm{Q}_N{\cal L}_{k}^{N}\mathrm{P}_Nf_{{\rm per}}-\mathrm{Q}_N{\cal L}_{k}\mathrm{P}_Nf_{{\rm per}}\|_{H_{\rm per}^s}&\le& 
 C\| ({\cal L}_{k}^{N}-{\cal L}_{k})\mathrm{P}_Nf_{{\rm per}}\|_{H_{\rm per}^1}\le C' N^{-t-1}\|\mathrm{P}_Nf_{{\rm per}}\|_{H_{\rm per}^{t+1}}\nonumber\\
 &\le& C' N^{-t}\|\mathrm{P}_Nf_{{\rm per}}\|_{H_{\rm per}^{t}}.\label{eq:5.18}
\end{eqnarray}
Estimates \eqref{eq:5.17} and \eqref{eq:5.18} yield the desired result \eqref{eq:01b:Th:01}.
\end{proof}

\begin{lemma}\label{lemma:5.6}
For any domain $D$ with $\overline{D}\cap \Gamma =\emptyset$ and any   $r,s$ there exists $C>0$ such that for any $N$ and  $ \varphi_{{\rm per}}^N\in\mathbb{T}_N$,
 \begin{equation}\label{eq:02:Th:01}
 \|\mathrm{DL}_{k}^{N} \varphi_{{\rm per}}^N-\mathrm{DL}_{k} \varphi_{{\rm per}}^N\|_{H^s(D)}+
 \|\mathrm{SL}_{k}^{N}  \varphi_{{\rm per}}^N-\mathrm{SL}_{k} \varphi_{{\rm per}}^N\|_{H^s(D)}\le C N^{-r}\| \varphi_{{\rm per}}^N\|_{H_{\rm per}^0}.
 \end{equation}
 \end{lemma}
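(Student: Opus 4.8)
The plan is to reduce the $H^s(D)$ estimate to a uniform pointwise bound on the quadrature error and its $\bm{z}$-derivatives, and then to replace the naive interpolation estimate \eqref{eq:5.7} by a frequency-gap (aliasing) argument. Since $\overline{D}\cap\Gamma=\emptyset$, we have $d_0:=\mathrm{dist}(\overline{D},\Gamma)>0$, so for every $\bm{z}\in D$ the integrand $t\mapsto \Phi_k(\bm{z}-{\bf x}(t))$ (and, for the double layer, $t\mapsto(\nabla_{\bm y}\Phi_k(\bm{z}-{\bm y}))|_{{\bm y}={\bf x}(t)}\cdot\bm{\nu}(t)$) is a smooth $2\pi$-periodic function of $t$, with all its $t$-Sobolev norms bounded uniformly in $\bm{z}\in\overline{D}$. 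Because the cardinal interpolation functions defining $\mathrm{Q}_N$ are independent of $\bm{z}$, the operator $\mathrm{Q}_N$ (acting in $t$) commutes with $\partial_{\bm z}^\beta$, and $\int_0^{2\pi}\mathrm{Q}_N(\cdot)\,{\rm d}t$ is exactly the $2N$-point trapezoidal rule $Q_{2N}[\cdot]$. Hence, differentiating under the integral sign, for any multi-index $\beta$,
\[
\partial_{\bm z}^\beta\big[({\rm SL}_k^N-{\rm SL}_k)\varphi^N_{{\rm per}}\big](\bm{z})=Q_{2N}[G_{\bm z}^\beta]-\int_0^{2\pi}G_{\bm z}^\beta\,{\rm d}t,\qquad G_{\bm z}^\beta(t):=\big(\partial_{\bm z}^\beta\Phi_k(\bm{z}-{\bf x}(t))\big)\varphi^N_{{\rm per}}(t),
\]
and similarly for ${\rm DL}_k$ with the extra smooth factor $\bm{\nu}(t)$.

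The key step is the aliasing identity for the trapezoidal rule, $Q_{2N}[g]-\int_0^{2\pi}g\,{\rm d}t=\sum_{m\neq0}\widehat{g}(2Nm)$. Writing $a_{\bm z}^\beta(t):=\partial_{\bm z}^\beta\Phi_k(\bm{z}-{\bf x}(t))$ and using that $\varphi^N_{{\rm per}}\in\mathbb{T}_N$ has Fourier coefficients supported in $|q|\le N$, the convolution theorem gives, for $m\neq0$,
\[
\widehat{G_{\bm z}^\beta}(2Nm)=\tfrac{1}{2\pi}\sum_{|q|\le N}\widehat{a_{\bm z}^\beta}(2Nm-q)\,\widehat{\varphi^N_{{\rm per}}}(q),
\]
where every coupling index satisfies $|2Nm-q|\ge(2|m|-1)N\ge N$. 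This is the crucial point: the quadrature error pairs only the low frequencies ($|q|\le N$) of the density with the high frequencies ($\ge N$) of the smooth kernel. By the uniform smoothness recorded above, the coefficients of $a_{\bm z}^\beta$ decay faster than any power, uniformly in $\bm{z}\in\overline{D}$ and $|\beta|\le\lceil s\rceil$: for every $\rho$ there is $C_\rho$ with $|\widehat{a_{\bm z}^\beta}(p)|\le C_\rho(1+|p|)^{-\rho}$. Applying Cauchy--Schwarz in $q$ together with Parseval ($\sum_q|\widehat{\varphi^N_{{\rm per}}}(q)|^2\le C\|\varphi^N_{{\rm per}}\|_{H^0_{{\rm per}}}^2$) and summing the resulting series over $m\neq0$ (convergent for $\rho>1$) yields, for arbitrary $r$,
\[
\sup_{\bm{z}\in D}\big|\partial_{\bm z}^\beta\big[({\rm SL}_k^N-{\rm SL}_k)\varphi^N_{{\rm per}}\big](\bm{z})\big|\le C_r\, N^{-r}\,\|\varphi^N_{{\rm per}}\|_{H^0_{{\rm per}}}.
\]

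Finally, since $D$ is bounded and the pointwise bound is uniform over $D$, integrating over $D$ and summing over $|\beta|\le\lceil s\rceil$ controls $\|\cdot\|_{H^{\lceil s\rceil}(D)}\ge\|\cdot\|_{H^s(D)}$, which gives the claimed estimate for ${\rm SL}_k$; the double-layer term is treated identically, the only change being the harmless smooth factor $\bm{\nu}(t)$. The main obstacle is precisely that the direct route through the interpolation bound \eqref{eq:5.7} fails: estimating $\|\mathrm{Q}_N(a_{\bm z}\varphi^N_{{\rm per}})-a_{\bm z}\varphi^N_{{\rm per}}\|_{H^0_{{\rm per}}}$ by $C N^{-\tau}\|a_{\bm z}\varphi^N_{{\rm per}}\|_{H^\tau_{{\rm per}}}$ forces the inverse inequality \eqref{eq:inverseIneq}, $\|\varphi^N_{{\rm per}}\|_{H^\tau_{{\rm per}}}\le N^\tau\|\varphi^N_{{\rm per}}\|_{H^0_{{\rm per}}}$, whose factor $N^\tau$ cancels the gain and leaves no decay. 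Overcoming this requires the frequency-gap structure above (equivalently, that $Q_{2N}$ integrates exactly every product of an element of $\mathbb{T}_N$ with a trigonometric polynomial of degree $<N$), and one must verify that the constants $C_\rho$ in the kernel-coefficient decay are uniform over $\overline{D}$ and over $|\beta|\le\lceil s\rceil$, which is exactly where $\mathrm{dist}(\overline{D},\Gamma)>0$ is used.
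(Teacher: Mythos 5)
Your proof is correct and follows essentially the same route as the paper: both arguments rest on the aliasing identity for the trapezoidal rule applied to the product of the smooth kernel with an element of $\mathbb{T}_N$, so that only frequencies $\ge N$ of the kernel couple to the density, followed by Cauchy--Schwarz and the superalgebraic decay of the kernel's Fourier coefficients (uniform over $\overline{D}$ because $\mathrm{dist}(\overline{D},\Gamma)>0$). You merely spell out more explicitly the reduction from the $H^s(D)$ norm to uniform pointwise bounds on $\bm{z}$-derivatives of the quadrature error, which the paper leaves implicit.
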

 \begin{proof} 
Since the kernels of the integral operators are smooth, the estimate follows from the aliasing effect of the trapezoidal rule for periodic functions. Indeed, for $|n|\le N$ we have for any $g$ smooth enough
\[
 \bigg| \frac{\pi}{N} \sum_{j=0}^{2N-1}  g_{{\rm per}}(t_j)e_{-n}(t_j)-\int_0^{2\pi} g_{{\rm per}}(t)e_{-n}(t)\,{\rm d}t\bigg|\le \frac{1}{2\pi}\sum_{\ell\ne 0} |\widehat{g}_{{\rm per}}(n+2\ell N)|,\quad 
 e_n(t):=\exp({\rm i}nt)
\]
(see for instance \cite{Kress:2014,SaVa:2002}). Hence, for any $\varphi_{{\rm per}}^N\in\mathbb{T}_{N}$ 
\[
  \bigg| \frac{\pi}{N} \sum_{j=0}^{2N-1}  g_{{\rm per}}(t_j){\varphi_{{\rm per}}^{N}(t_j)}-\int_0^{2\pi} g_{{\rm per}}(t){\varphi_{{\rm per}}^N(t)}\,{\rm d}t\bigg|\le \frac{1}{2\pi}\sum_{n\in\mathbb{Z}_N} |\widehat{\varphi}_{{\rm per}}(n)|\sum_{\ell\ne 0} |\widehat{g}_{{\rm per}}(-n+2\ell N)|.
\]
The last term can be easily bounded using the Cauchy-Schwarz inequality:
\begin{eqnarray*}
 \sum_{n\in\mathbb{Z}_N} | \varphi_{{\rm per}}^N(n)|\sum_{\ell\ne 0} | \widehat{g}_{{\rm per}}(-n+2\ell N)|&\le& {(2N)^{-r}}
\| \varphi_{{\rm per}}^N\|_{H_{\rm per}^0}  \Bigg[\sum_{n\in\mathbb{Z}_N}  \underbrace{ \Bigg(\sum_{\ell\ne 0} \frac{1}{|\ell-n/(2N)|^{2r} }\bigg)}_{=:C_r(n/(2N))}\\
&&\qquad \times 
 \bigg(\sum_{\ell\ne 0} |-n+2\ell N|^{2r}| \widehat{g} _{{\rm per}}(-n+2\ell N)|^2\bigg)\Bigg]^{1/2}\\
 &\le&C_r  N^{-r}\| \varphi_{{\rm per}}^N\|_{H_{\rm per}^0} \|g_{{\rm per}}\|_{H_{\rm per}^r} 
\end{eqnarray*}
which is valid for any $r>1/2$.  (We have used above that the series function $C_r(z)$ is bounded for   $|z|\le 1/2$)
\end{proof}

\begin{corollary}\label{cor:5.6}
For any $s\ge 0$, there exists $C>0$ so that for any $N$,
\[
 \|\mathrm{Q}_N{\cal L}_{k}^{N}\mathrm{P}_N f_{{\rm per}}\|_{H_{\rm per}^s}\le C \|\mathrm{P}_Nf_{{\rm per}}\|_{H_{\rm per}^s} \le  C \|f_{{\rm per}}\|_{H_{\rm per}^s} .
\]
\end{corollary}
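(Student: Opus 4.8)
The plan is to reduce the statement to the consistency estimate \eqref{eq:01b:Th:01} already established in Theorem~\ref{Th:01}, together with the standard mapping properties of $\mathrm{Q}_N$, $\mathrm{P}_N$ and ${\cal L}_k$. First I would split, by the triangle inequality,
\[
\|\mathrm{Q}_N{\cal L}_k^N \mathrm{P}_N f_{\rm per}\|_{H_{\rm per}^s} \le \|\mathrm{Q}_N{\cal L}_k^N \mathrm{P}_N f_{\rm per} - \mathrm{Q}_N{\cal L}_k \mathrm{P}_N f_{\rm per}\|_{H_{\rm per}^s} + \|\mathrm{Q}_N{\cal L}_k \mathrm{P}_N f_{\rm per}\|_{H_{\rm per}^s},
\]
so that it suffices to bound the discrete-consistency difference and the exact-operator term separately, each by $C\|f_{\rm per}\|_{H_{\rm per}^s}$ uniformly in $N$.

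For the first term I would invoke \eqref{eq:01b:Th:01} with the choice $t=s$, which is admissible since every $s\ge 0$ satisfies $t\ge\max\{s-1,0\}$. This yields a bound $C\,N^{s-t-\alpha}\|f_{\rm per}\|_{H_{\rm per}^s}=C\,N^{-\alpha}\|f_{\rm per}\|_{H_{\rm per}^s}$ with $\alpha=\min\{s,1\}\ge 0$, hence $\le C\|f_{\rm per}\|_{H_{\rm per}^s}$ for all $N$.

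The term $\mathrm{Q}_N{\cal L}_k \mathrm{P}_N f_{\rm per}$ is where the only genuine difficulty lies: for $s\le 1/2$ the interpolation operator $\mathrm{Q}_N$ is not uniformly bounded on $H_{\rm per}^s$, so one cannot simply chain continuity estimates. The idea is to fix an auxiliary exponent $t$ with $t>\max\{s,1/2\}$ and apply \eqref{eq:5.7} in the form $\|\mathrm{Q}_N g_{\rm per}\|_{H_{\rm per}^s}\le \|g_{\rm per}\|_{H_{\rm per}^s}+C\,N^{s-t}\|g_{\rm per}\|_{H_{\rm per}^t}$ to $g_{\rm per}={\cal L}_k\mathrm{P}_N f_{\rm per}$. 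Since ${\cal L}_k:H_{\rm per}^r\to H_{\rm per}^r$ is continuous for every $r$, the first summand is controlled by $\|\mathrm{P}_N f_{\rm per}\|_{H_{\rm per}^s}$, while for the second I would use the inverse inequality \eqref{eq:inverseIneq} on the trigonometric polynomial $\mathrm{P}_N f_{\rm per}\in\mathbb{T}_N$ to get $\|{\cal L}_k\mathrm{P}_N f_{\rm per}\|_{H_{\rm per}^t}\le C\|\mathrm{P}_N f_{\rm per}\|_{H_{\rm per}^t}\le C\,N^{t-s}\|\mathrm{P}_N f_{\rm per}\|_{H_{\rm per}^s}$. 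The crucial point is that the factor $N^{t-s}$ produced by the inverse inequality exactly cancels the factor $N^{s-t}$ coming from \eqref{eq:5.7}, leaving the $N$-independent bound $C\|\mathrm{P}_N f_{\rm per}\|_{H_{\rm per}^s}$.

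Finally I would combine the two estimates and use that $\mathrm{P}_N$ truncates Fourier coefficients, so that \eqref{eq:4.02} immediately gives $\|\mathrm{P}_N f_{\rm per}\|_{H_{\rm per}^s}\le\|f_{\rm per}\|_{H_{\rm per}^s}$; this delivers both inequalities in the statement. I expect the main obstacle to be precisely the low-regularity regime $s\le 1/2$ treated in the third paragraph, where the inverse-inequality/approximation trade-off is what makes the constant uniform in $N$; the one routine check is that all constants involved (the constant in \eqref{eq:5.7}, the operator norm of ${\cal L}_k$, and the inverse-inequality constant) are indeed independent of $N$.
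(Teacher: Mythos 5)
Your proof is correct, and the overall skeleton matches the paper's: both split off the consistency difference $\mathrm{Q}_N({\cal L}_k^N-{\cal L}_k)\mathrm{P}_N f_{\rm per}$ and control it by \eqref{eq:01b:Th:01} with $t=s$, and both reduce the remaining estimate to $\|\mathrm{P}_N f_{\rm per}\|_{H^s_{\rm per}}\le\|f_{\rm per}\|_{H^s_{\rm per}}$. The genuine difference is in the step you correctly identify as the only delicate one, namely taming $\mathrm{Q}_N$ applied to ${\cal L}_k\mathrm{P}_N f_{\rm per}$ when $s\le 1/2$. The paper writes ${\cal L}_k=2\mathrm{I}-{\rm M}_k$ with ${\rm M}_k=2{\cal L}_k({\rm K}_k-\mathrm{i}k{\rm V}_k)$ a smoothing operator of order $-1$ (see \eqref{eq:def:M}), so that $\mathrm{Q}_N{\cal L}_k\mathrm{P}_N-{\cal L}_k\mathrm{P}_N=(\mathrm{Q}_N-\mathrm{I}){\rm M}_k\mathrm{P}_N$ and the interpolation error \eqref{eq:5.7} applied to the smoother function ${\rm M}_k\mathrm{P}_N f_{\rm per}\in H^{s+1}_{\rm per}$ yields the bound \eqref{eq:5.19} with an extra factor $N^{-1}$. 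You instead use only that $\mathrm{P}_N f_{\rm per}$ is a trigonometric polynomial, pairing the interpolation error $N^{s-t}$ with the inverse inequality $N^{t-s}$ so the powers cancel. Your route is more elementary and more general — it needs nothing about ${\cal L}_k$ beyond uniform boundedness on each $H^r_{\rm per}$, so it would apply verbatim to any such operator — while the paper's route additionally delivers the quantitative $O(N^{-1})$ consistency estimate \eqref{eq:5.19}, a stronger intermediate fact whose underlying ${\rm M}_k$-smoothing mechanism is reused in the proof of Theorem \ref{theo:5.10}. For the corollary as stated, either argument suffices, and all the constants you rely on (the operator norms of ${\cal L}_k$ on $H^s_{\rm per}$ and $H^t_{\rm per}$ for a fixed auxiliary $t$, and the constants in \eqref{eq:5.7} and \eqref{eq:inverseIneq}) are indeed independent of $N$.
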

\begin{proof}
The identity 
	\begin{equation}\label{eq:def:M}
	 {\cal L}_k =  2{\rm I} -{\rm M}_k, \quad \mathrm{M}_k :=2 {{\cal L}_k}({\cal L}_k^{-1}-\tfrac12{\rm I}) =2 {\cal L}_k({\rm K}_k-{\rm i}k {\rm V}_k)
	\end{equation}
and the mapping properties ${\rm V}_k-{\rm i}k {\rm K}_k:H^s_{{\rm per}}\to H_{{\rm per}}^{s+1}$ (see for instance, \cite[Section 6.2]{SaVa:2002})
yield
\begin{eqnarray}
    \|\mathrm{Q}_N{\cal L}_{k}\mathrm{P}_N f_{{\rm per}}-{\cal L}_{k} \mathrm{P}_N  f_{{\rm per}}\|_{H_{\rm per}^s}&=&
   \|\mathrm{Q}_N{\rm M}_{k}\mathrm{P}_N f_{{\rm per}}-{\rm M}_{k} \mathrm{P}_N f_{{\rm per}}\|_{H_{\rm per}^s}\nonumber\\
   &\le& 
   C' N^{-1} \|{\rm M}_{k} \mathrm{P}_N f_{{\rm per}}\|_{H_{\rm per}^{s+1}}\le C N^{-1}\|f_{{\rm per}}\|_{H_{\rm per}^{s}}\label{eq:5.19}
   \end{eqnarray}
where we have used also \eqref{eq:5.7} and \eqref{eq:5.6}.
The result follows readily from the decomposition
\begin{eqnarray*}
 \|\mathrm{Q}_N{\cal L}_{k}^{N}\mathrm{P}_N f_{{\rm per}}\|_{H_{\rm per}^s}&\le& 
 \|{\cal L}_{k}\mathrm{P}_N f_{{\rm per}}\|_{H_{\rm per}^s}+
 \|\mathrm{Q}_N{\cal L}_{k}^{N}\mathrm{P}_N f_{{\rm per}}-\mathrm{Q}_N{\cal L}_{k} \mathrm{P}_N  f_{{\rm per}}\|_{H_{\rm per}^s} \\ &&+ 
    \|\mathrm{Q}_N{\cal L}_{k}\mathrm{P}_N f_{{\rm per}}-{\cal L}_{k} \mathrm{P}_N  f_{{\rm per}}\|_{H_{\rm per}^s},
\end{eqnarray*}
and {the estimates}  \eqref{eq:5.19}  and \eqref{eq:01b:Th:01}  in Theorem \ref{Th:01}.
\end{proof}

We are ready to prove the convergence of the corresponding block in \eqref{eq:5.9}.

\begin{proposition}\label{prop:5.8}
For any $t\ge 0$, there exists $C>0$ such that for any $h$, $N$ and $f_{\rm per} \in H^t_{{\rm per}}$, 
\begin{equation}
  \label{eq:01:prop:5.8}
{\|  \KGammaSigma f_{\rm per}-{\rm Q}_\Sigma^h\KGammaSigma^N {\rm P}_{N}f_{\rm per}\|_{H^{1/2}(\Sigma)}}\le C ( N^{-t }\|f_{\rm per}\|_{H^{t}_{{\rm per}}} + h_\Sigma^{d+1/2} \|f_{\rm per}\|_{H^{0}_{{\rm per}}}). 
\end{equation}
In particular,  we have the convergence in operator norm:
\begin{equation}\label{eq:01:prop:5.8b}
 \| { \KGammaSigma f_{\rm per}-{\rm Q}_\Sigma^h\KGammaSigma^N {\rm P}_{N}f_{\rm per}}  \|_{ H^{1/2}_{{\rm per}}\to H^{1/2} (\Sigma)}
 \to 0\quad \text{as }(N,h)\to (\infty,0).
\end{equation}
\end{proposition}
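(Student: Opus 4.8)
The plan is to exploit the positive distance between the polygonal curve $\Sigma$ and the smooth curve $\Gamma$. Since $\overline{\Sigma}\cap\Gamma=\emptyset$, for $\bm z$ ranging over a neighbourhood of $\Sigma$ the kernels of ${\rm SL}_k$ and ${\rm DL}_k$ are analytic, so the trace operators $\gamma_\Sigma{\rm SL}_k$ and $\gamma_\Sigma{\rm DL}_k$ are \emph{smoothing}: they map every $H^s_{{\rm per}}$ (with $s$ arbitrarily negative) boundedly into $H^{1/2}(\Sigma)$, because ${\rm SL}_k\varphi_{\rm per}(\bm z)$ is just the pairing of $\varphi_{\rm per}$ with the smooth function $\Phi_k(\bm z-{\bf x}(\cdot))$. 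This is the structural observation driving the estimate, as it lets me absorb the potential evaluation and measure all errors in the weakest convenient periodic norm. I would begin from the splitting
\[
\KGammaSigma f_{\rm per} - {\rm Q}_\Sigma^h\KGammaSigma^N {\rm P}_{N}f_{\rm per}
= \big(\KGammaSigma - \KGammaSigma^N {\rm P}_{N}\big) f_{\rm per}
+ \big({\rm I}-{\rm Q}_\Sigma^h\big)\KGammaSigma^N {\rm P}_{N} f_{\rm per},
\]
treating the FEM interpolation term and the BEM-consistency term separately.

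For the interpolation term I would apply the nodal estimate \eqref{eq:4.5b} with $s=1/2$ and some $t>d+1$, which gives the factor $h_\Sigma^{d+1/2}$ provided $\KGammaSigma^N {\rm P}_{N} f_{\rm per}$ is bounded in $H^{t}(\Sigma)$ uniformly in $N$. Writing $\varphi^N:={\cal L}_k^N {\rm P}_N f_{\rm per}$ and recalling from Remark \ref{remark:3.1} that the discrete potentials only see grid values, i.e. ${\rm SL}_k^N\varphi^N={\rm SL}_k^N{\rm Q}_N\varphi^N$ (and likewise for ${\rm DL}_k^N$), I would split ${\rm DL}_k^N\varphi^N-{\rm DL}_k\varphi^N=({\rm DL}_k^N-{\rm DL}_k){\rm Q}_N\varphi^N+{\rm DL}_k({\rm Q}_N-{\rm I})\varphi^N$. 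The first piece is super-algebraically small by Lemma \ref{lemma:5.6} (with $D$ a neighbourhood of $\Sigma$, $\overline D\cap\Gamma=\emptyset$), and the separated, smooth potential of the remainder is controlled by the smoothing property; together with the uniform stability $\|{\rm Q}_N\varphi^N\|_{H^0_{\rm per}}\le C\|f_{\rm per}\|_{H^0_{\rm per}}$ from Corollary \ref{cor:5.6}, this yields $\|\KGammaSigma^N {\rm P}_{N} f_{\rm per}\|_{H^{t}(\Sigma)}\le C\|f_{\rm per}\|_{H^0_{\rm per}}$, hence the $h_\Sigma^{d+1/2}\|f_{\rm per}\|_{H^0_{\rm per}}$ contribution.

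For the consistency term I would insert the continuous potential of the discrete density and split it as $\gamma_\Sigma({\rm DL}_k-{\rm i}k{\rm SL}_k)\big({\cal L}_k f_{\rm per}-{\cal L}_k^N {\rm P}_N f_{\rm per}\big)$ plus the potential-discretization defect $\gamma_\Sigma\big[({\rm DL}_k-{\rm DL}_k^N)-{\rm i}k({\rm SL}_k-{\rm SL}_k^N)\big]\varphi^N$. In the first, the smoothing property bounds it by $\|{\cal L}_k(f_{\rm per}-{\rm P}_N f_{\rm per})\|_{H^0_{\rm per}}+\|({\cal L}_k-{\cal L}_k^N){\rm P}_N f_{\rm per}\|_{H^1_{\rm per}}$; the first summand is $\le CN^{-t}\|f_{\rm per}\|_{H^t_{\rm per}}$ by continuity of ${\cal L}_k$ on $H^0_{\rm per}$ and \eqref{eq:5.6}, while for the second I use Theorem \ref{Th:01} at $s=1$ (rate $N^{-t'}$) together with the inverse inequality \eqref{eq:inverseIneq} on ${\rm P}_N f_{\rm per}\in\mathbb{T}_N$ to reduce the required regularity down to $H^t_{\rm per}$, again giving $N^{-t}\|f_{\rm per}\|_{H^t_{\rm per}}$. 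The defect term is handled exactly as in the interpolation step (Lemma \ref{lemma:5.6}, the ${\rm Q}_N\varphi^N$ split, and smoothing), and is super-algebraically small.

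The main obstacle is that the BEM convergence of ${\cal L}_k^N$ in Theorem \ref{Th:01} requires $s>1/2$, whereas the proposition must hold for all $t\ge0$, including rough data with $t<1/2$; the resolution is precisely the combination of the smoothing of $\gamma_\Sigma({\rm DL}_k-{\rm i}k{\rm SL}_k)$ (which permits working at a fixed $s\in(1/2,1]$ regardless of $t$) with the inverse inequality on $\mathbb{T}_N$, which trades the excess regularity for the correct power of $N$. A secondary technical nuisance is the mismatch between $\varphi^N$ and ${\rm Q}_N\varphi^N$ inside the discrete potentials, resolved by the split above. Summing the three contributions gives \eqref{eq:01:prop:5.8}; finally, taking $t=1/2$ and bounding $\|f_{\rm per}\|_{H^{0}_{\rm per}}\le\|f_{\rm per}\|_{H^{1/2}_{\rm per}}$ shows $\|\KGammaSigma-{\rm Q}_\Sigma^h\KGammaSigma^N {\rm P}_{N}\|_{H^{1/2}_{\rm per}\to H^{1/2}(\Sigma)}\le C(N^{-1/2}+h_\Sigma^{d+1/2})\to0$, which is \eqref{eq:01:prop:5.8b}.
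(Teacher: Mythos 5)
Your overall route is the paper's: the same smoothing observation for $\gamma_\Sigma(\mathrm{DL}_k-\mathrm{i}k\mathrm{SL}_k)$ coming from the positive distance between $\Sigma$ and $\Gamma$, the same ingredients (Lemma \ref{lemma:5.6}, the nodal estimate \eqref{eq:4.5b} for the $h_\Sigma^{d+1/2}$ factor, Theorem \ref{Th:01} combined with the inverse inequality \eqref{eq:inverseIneq} for the density error, Corollary \ref{cor:5.6} for stability), and a telescoping decomposition whose pieces correspond essentially one-to-one with the five terms ${\rm L}_1^h,\dots,{\rm L}_5^{h,N}$ in the paper's proof of Proposition \ref{prop:5.8}. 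Your treatment of the interpolation term and of $\RGammaSigma({\cal L}_k f_{\rm per}-{\cal L}_k^N{\rm P}_Nf_{\rm per})$ (with $\RGammaSigma=\gamma_\Sigma(\mathrm{DL}_k-\mathrm{i}k\mathrm{SL}_k)$ as in \eqref{eq:02:prop:5.8}) is correct as written.

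There is, however, one concrete overclaim that leaves a gap. Writing $\varphi^N={\cal L}_k^N{\rm P}_Nf_{\rm per}$, you assert that the potential-discretization defect $\gamma_\Sigma\big[({\rm DL}_k-{\rm DL}_k^N)-\mathrm{i}k({\rm SL}_k-{\rm SL}_k^N)\big]\varphi^N$ is super-algebraically small. Since the discrete potentials only see grid values, this defect equals $\RGammaSigma\varphi^N-\RGammaSigma^N{\rm Q}_N\varphi^N$, and after your own ${\rm Q}_N$-split the genuinely super-algebraic part is only $(\RGammaSigma-\RGammaSigma^N){\rm Q}_N\varphi^N$; the residual $\RGammaSigma({\rm I}-{\rm Q}_N)\varphi^N$ is merely algebraically small, because $({\rm I}-{\rm Q}_N)\varphi^N$ does not vanish and for rough data decays only like $N^{-t-1}\|f_{\rm per}\|_{H^t_{\rm per}}$. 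Bounding it requires an argument you never supply --- e.g.\ using ${\cal L}_k=2{\rm I}-{\rm M}_k$ with ${\rm M}_k$ of order $-1$, so that $({\rm I}-{\rm Q}_N){\cal L}_k{\rm P}_N=-({\rm I}-{\rm Q}_N){\rm M}_k{\rm P}_N$, together with \eqref{eq:01b:Th:01} for the remaining $({\cal L}_k^N-{\cal L}_k){\rm P}_N$ contribution (this is precisely the mechanism behind \eqref{eq:5.19}). The same unestimated quantity lurks in your stability bound for $\|\KGammaSigma^N{\rm P}_Nf_{\rm per}\|_{H^{t}(\Sigma)}$, where you pass through the continuous potential ${\rm DL}_k\varphi^N$ but only have Corollary \ref{cor:5.6} for ${\rm Q}_N\varphi^N$, not for $\varphi^N$ itself. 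The clean fix is what the paper does: arrange the algebra so that the discrete density only ever enters as ${\rm Q}_N{\cal L}_k^N{\rm P}_N$ (legitimate since $\RGammaSigma^N{\cal L}_k^N{\rm P}_N=\RGammaSigma^N{\rm Q}_N{\cal L}_k^N{\rm P}_N$), so that Lemma \ref{lemma:5.6} and Corollary \ref{cor:5.6} apply verbatim and $({\rm I}-{\rm Q}_N)\varphi^N$ never appears. With that repair your estimate closes, and your deduction of \eqref{eq:01:prop:5.8b} from \eqref{eq:01:prop:5.8} with $t=1/2$ is correct.
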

\begin{proof}
For the purpose of this proof, we  define 
\begin{equation}
  \label{eq:02:prop:5.8}
 \RGammaSigma :=\gamma_\Sigma (\mathrm{DL}_{k}-{\rm i}k\mathrm{SL}_{k}),\quad
 \RGammaSigma^{N} :=\gamma_\Sigma (\mathrm{DL}_{k}^{N}-{\rm i}k\mathrm{SL}_{k}^{N}).
\end{equation}
Clearly  $ \RGammaSigma:H^0_{{\rm per}}\to H^{t}(\Sigma)$  is continuous, for any $t$, and, from Lemma \ref{lemma:5.6}, (see also \eqref{eq:4.2})  
\begin{equation}
  \label{eq:03:prop:5.8}
 \| \RGammaSigma{\rm Q}_N \varphi_{\rm per}- \RGammaSigma^N {\rm Q}_N \varphi_{\rm per}\|_{H^{m}(\Sigma)}
\le C_t N^{-t}\|{\rm Q}_N \varphi_{\rm per}\|_{H_{\rm per}^0}
\end{equation}
for any $t$ and $m$, with $C$ independent of $\varphi_{{\rm per}}$ and $N$. We also have for any compact domain $D$ far away from $\Gamma$  and for any $t\ge 0$, 
\begin{equation}
  \label{eq:03b:prop:5.8}
 \|\RGammaSigma \varphi_{\rm per}\|_{H^m(D)}\le C \|\varphi_{\rm per}\|_{H^{-t}_{\rm per}}
\end{equation}
which in particular implies cf. \eqref{eq:4.5b}
\begin{equation}
  \label{eq:03c:prop:5.8}
 \|({\rm I}-{\rm Q}_\Sigma^h )\RGammaSigma \varphi_{\rm per}\|_{H^{1/2}(\Sigma)}\le C h_{\Sigma}^{d+1/2}\|\varphi_{\rm per}\|_{H^{-t}_{\rm per}},\qquad
 \|{\rm Q}_\Sigma^h \RGammaSigma \varphi_{\rm per}\|_{H^{1/2}(\Sigma)}\le C\|\varphi_{\rm per}\|_{H^{-t}_{\rm per}}.
\end{equation}
 
Write now
\begin{equation}
  \label{eq:04:prop:5.8}
\begin{aligned} 
\KGammaSigma - 
 {\rm Q}_\Sigma^h\KGammaSigma^N {\rm P}_{N}  \ =\ & 
 \RGammaSigma{\cal L}_{k}-
 {\rm Q}_\Sigma^h  \RGammaSigma^{N} {\rm Q}_N{\cal L}_{k}^N \mathrm{P}_N  \\
 \ =\ & 
({\rm I}- {\rm Q}_\Sigma^h )  \RGammaSigma{\cal L}_{k} +{\rm Q}_\Sigma^h\RGammaSigma  {\cal L}_{k}(  {\rm I}-{\rm P}_N)\\
& +{\rm Q}_\Sigma^h  \RGammaSigma({\rm I}-{\rm Q}_N) {\cal L}_{k}{\rm P}_N  +
  {\rm Q}_\Sigma^h \RGammaSigma( {\rm Q}_N{\cal L}_{k}-{\rm Q}_N{\cal L}_{k}^N)\mathrm{P}_N\\
 \ &  +
 {\rm Q}_\Sigma^h(  \RGammaSigma-\RGammaSigma^{N}  ){\rm Q}_N{\cal L}_{k}^N \mathrm{P}_N\\
 =:\ &
 {\rm L}^{h}_{1}+{\rm L}^{h,N}_{2}+{\rm L}^{h,N}_{3}+{\rm L}^{h,N}_{4}+{\rm L}^{h,N}_{5}.
 \end{aligned}
 \end{equation}
 Let us bound these terms now. From \eqref{eq:03c:prop:5.8} and the continuity ${\cal L}_k: H_{{\rm per}}^s\to H_{{\rm per}}^s$, 
   \[ 
   \| {\rm L}^{h}_{1}f_{{\rm per}} \|_{H^{1/2}(\Sigma)}\le  C h_\Sigma^{d+1/2}  \|{\cal L}_k f_{{\rm per}}\|_{H^{0}_{{\rm per}}} \le  C'
 h_\Sigma^{d+1/2} \| f_{{\rm per}} \|_{H^{0}_{\rm per}}.
 \]
 Proceeding similarly we derive  from \eqref{eq:5.6}, 
  \[
   \| {\rm L}^{h,N}_{2}f_{{\rm per}} \|_{H^{1/2}(\Sigma)}\le C \| ( {\rm I}-{\rm P}_N )f_{{\rm per}} \|_{H^{-t}_{\rm per}}\le C N^{-t}\|f_{{\rm per}} \|_{H^{0}_{\rm per}}, 
 \]
 and, by \eqref{eq:5.7}, 
  \[
  \|{\rm L}^{h,N}_{3} f_{{\rm per}}\|_{H^{1/2}(\Sigma)}\le C \|( {\rm I}-{\rm Q}_N){\cal L}_{k}{\rm P}_Nf_{{\rm per}} \|_{H^{0}_{\rm per}} \le C''N^{-t}\|f_{{\rm per}}\|_{H^{t}_{\rm per}}. 
 \]
 The fourth term is bounded using  \eqref{eq:01b:Th:01} in Theorem \ref{Th:01},
 \[
  \|{\rm L}^{h,N}_{4} f_{{\rm per}}\|_{H^{1/2}(\Sigma)}\le C \|( {\rm Q}_N{\cal L}_{k}{\rm P}_N-{\rm Q}_N{\cal L}^N_{k}{\rm P}_N )f_{{\rm per}} \|_{H^{0}_{\rm per}} \le C''N^{-t}\|f_{{\rm per}}\|_{H^{t}_{\rm per}}. 
 \]
 Finally, \eqref{eq:03c:prop:5.8} again, \eqref{eq:03:prop:5.8} with Corollary \ref{cor:5.6} yield,
 \[
 \| {\rm L}^{h,N}_{5}f_{{\rm per}} \|_{H^{1/2}(\Sigma)}\le C N^{-t}\|  {\rm Q}_N {\cal L}_{k}^N{\rm P}_N f_{{\rm per}} \|_{H^{0}_{\rm per}}\le C N^{-t}\|f_{{\rm per}} \|_{H^{0}_{\rm per}}.
 \]
Gathering these bounds   yields the estimate \eqref{eq:01:prop:5.8}, and consequently  \eqref{eq:01:prop:5.8b}  holds. 
%
%
%
 
\end{proof}

\subsection{Convergence of the full scheme}
We are ready to prove the main result of this paper: stability and convergence for the FEM-BEM numerical algorithm.
\begin{theorem}
For any $N$ large enough and ${\cal T}_h$ sufficiently fine satisfying Assumption 1, the mapping
\[
 {\cal I}-{\cal K}_{h,N}: H^{1/2}(\Sigma)\times H^{1/2}_{{\rm per}}\to H^{1/2}(\Sigma)\times H^{1/2}_{{\rm per}}
\]
is uniformly bounded, invertible and with inverse uniformly bounded. 

Moreover, if $(f_\Sigma,f_\Gamma)$ is the solution of \eqref{eq:BEMFEM:0} and  $(f^h_\Sigma,f^N_{{\rm per}})$ that of \eqref{eq:NhBEMFEM:0}, then for any $r\ge 0$  and $0<\varepsilon\le \varepsilon_0$, with $\varepsilon_0$ as in Assumption 1,   we have the following estimate, with $f_{\rm per} = f_\Gamma\circ{\bf x}$, 
\begin{equation}\label{eq:5.26}
 \begin{aligned}
\|f_\Sigma-f_\Sigma^h\|_{H^{1/2}(\Sigma)} & +
  \|f_{\rm per}-f_{\rm per}^N\|_{H^{1/2}_{\rm per}}\\
   \le\   
  C&\Big[\|{\gamma_{\Sigma} u^{\rm inc}-{\rm Q}^h_{\Sigma}\gamma_{\Sigma} u^{\rm inc}}\|_{H^{1/2}(\Sigma)}+
  \|{u^{\rm inc}\circ{\bf x}-{\rm Q}_N  u^{\rm inc}\circ{\bf x}    }  \|_{ H^{1/2}_{\rm per}}\\
  & \ +      
  \big(h_D^{-\varepsilon}  N^{-\varepsilon}+1\big)h^{1/2} \big[\inf_{v_h\in\mathbb{P}_{h,d}}\|\KSigmaOmegaTwo  f_\Sigma-v_h\|_{H^1(\Omega_2)}+ \| f_\Sigma-{\rm P}_\Sigma^h f_\Sigma\|_{H^{1/2}(\Sigma)}\big]
 \\
& \ +   ( N^{-\varepsilon} h_D^{d-\varepsilon} +h_D^{d }+N^{-r})\| f_\Sigma\|_{L^2(\Sigma)}
   +  ( N^{-t}+ h_\Sigma^{d+1/2})\|f_{\rm per}\|_{H^{t}_{\rm per}}\Big]
 \end{aligned}
\end{equation}
with $C$ independent of $f_\Sigma$, $f_{{\rm per}}$, $h$ and $N$.
\end{theorem}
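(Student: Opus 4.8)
The plan is to view ${\cal I}-{\cal K}_{h,N}$ as a convergent perturbation of the continuously invertible operator ${\cal I}-{\cal K}$ of Theorem \ref{theo:main:01}, deduce uniform invertibility by a Neumann-series argument, and then run a standard consistency-plus-stability (Strang-type) estimate for the error.

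\textbf{Stability.} First I would record that the two off-diagonal blocks of ${\cal K}_{h,N}$ converge to those of ${\cal K}$ in operator norm: the block ${\rm Q}_N\KSigmaGamma^h{\rm P}_\Sigma^h\to\KSigmaGamma$, $H^{1/2}(\Sigma)\to H^{1/2}_{\rm per}$, by \eqref{eq:01:prop:5.5b} of Proposition \ref{prop:5.4}, and the block ${\rm Q}_\Sigma^h\KGammaSigma^N{\rm P}_N\to\KGammaSigma$, $H^{1/2}_{\rm per}\to H^{1/2}(\Sigma)$, by \eqref{eq:01:prop:5.8b} of Proposition \ref{prop:5.8}. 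Here Assumption 1 (with $\varepsilon\le\varepsilon_0$) is exactly what forces the factor $N^{-\varepsilon}h_D^{-\varepsilon}h^{1/2}$ in Proposition \ref{prop:5.4} to vanish, so the FEM block genuinely converges. Hence $\|{\cal K}-{\cal K}_{h,N}\|\to 0$ on $H^{1/2}(\Sigma)\times H^{1/2}_{\rm per}$ as $(N,h)\to(\infty,0)$, and in particular ${\cal K}_{h,N}$ is uniformly bounded. Writing
\[
{\cal I}-{\cal K}_{h,N}=({\cal I}-{\cal K})\big[{\cal I}+({\cal I}-{\cal K})^{-1}({\cal K}-{\cal K}_{h,N})\big],
\]
for $N$ large and $h$ small enough we have $\|({\cal I}-{\cal K})^{-1}({\cal K}-{\cal K}_{h,N})\|\le\tfrac12$, so the bracketed factor is invertible by Neumann series with norm $\le 2$; therefore ${\cal I}-{\cal K}_{h,N}$ is invertible with $\|({\cal I}-{\cal K}_{h,N})^{-1}\|\le 2\|({\cal I}-{\cal K})^{-1}\|$, uniformly in $(N,h)$.

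\textbf{Error equation.} The continuous unknowns satisfy $({\cal I}-{\cal K})[f_\Sigma,f_{\rm per}]^{\!\top}=[g_\Sigma,g_{\rm per}]^{\!\top}$ with $g_\Sigma=\gamma_\Sigma u^{\rm inc}$ and $g_{\rm per}=-u^{\rm inc}\circ{\bf x}$, while (after dropping the redundant ${\rm P}_\Sigma^h,{\rm P}_N$ acting on discrete data, as noted below \eqref{eq:5.9}) the discrete solution solves $({\cal I}-{\cal K}_{h,N})[f_\Sigma^h,f_{\rm per}^N]^{\!\top}=[{\rm Q}_\Sigma^h g_\Sigma,{\rm Q}_N g_{\rm per}]^{\!\top}$. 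Inserting the exact solution into the discrete operator and using $({\cal I}-{\cal K}_{h,N})[f_\Sigma,f_{\rm per}]^{\!\top}=[g_\Sigma,g_{\rm per}]^{\!\top}+({\cal K}-{\cal K}_{h,N})[f_\Sigma,f_{\rm per}]^{\!\top}$, I obtain
\[
({\cal I}-{\cal K}_{h,N})\begin{bmatrix} f_\Sigma-f_\Sigma^h\\ f_{\rm per}-f_{\rm per}^N\end{bmatrix}
=\begin{bmatrix}({\rm I}-{\rm Q}_\Sigma^h)g_\Sigma\\ ({\rm I}-{\rm Q}_N)g_{\rm per}\end{bmatrix}
+\begin{bmatrix}(\KGammaSigma-{\rm Q}_\Sigma^h\KGammaSigma^N{\rm P}_N)f_{\rm per}\\ (\KSigmaGamma-{\rm Q}_N\KSigmaGamma^h{\rm P}_\Sigma^h)f_\Sigma\end{bmatrix}.
\]
Applying the uniform bound on $({\cal I}-{\cal K}_{h,N})^{-1}$ reduces \eqref{eq:5.26} to estimating the four terms on the right.

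\textbf{Consistency bounds.} The two data terms $({\rm I}-{\rm Q}_\Sigma^h)\gamma_\Sigma u^{\rm inc}$ and $({\rm I}-{\rm Q}_N)(u^{\rm inc}\circ{\bf x})$ reproduce the first two contributions in \eqref{eq:5.26}. The top operator term is controlled by Proposition \ref{prop:5.8} applied to $f_{\rm per}$ (with regularity exponent $t$), giving the $(N^{-t}+h_\Sigma^{d+1/2})\|f_{\rm per}\|_{H^t_{\rm per}}$ contribution; the bottom one is controlled by Proposition \ref{prop:5.4} applied to $f_\Sigma$ (its free exponent renamed $r$ to avoid clashing with $t$), giving the $(h_D^{-\varepsilon}N^{-\varepsilon}+1)h^{1/2}[\cdots]$ and $(N^{-\varepsilon}h_D^{d-\varepsilon}+h_D^d+N^{-r})\|f_\Sigma\|_{L^2(\Sigma)}$ contributions. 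Summing yields \eqref{eq:5.26}. The genuine analytic work is already spent in Propositions \ref{prop:5.4} and \ref{prop:5.8}; the only delicate point here is the bookkeeping --- matching each block to the correct proposition and to the correct pair of Sobolev spaces, and invoking Assumption 1 so that the perturbation in the stability step truly tends to zero rather than merely staying bounded.
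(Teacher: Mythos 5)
Your proposal is correct and follows essentially the same route as the paper: operator-norm convergence of ${\cal K}_{h,N}$ to ${\cal K}$ from Propositions \ref{prop:5.4} and \ref{prop:5.8} gives uniform invertibility (the paper leaves the Neumann-series step implicit, you spell it out), and the same error identity $({\cal I}-{\cal K}_{h,N})[f_\Sigma-f_\Sigma^h,\,f_{\rm per}-f_{\rm per}^N]^{\top}=[({\rm I}-{\rm Q}_\Sigma^h)g_\Sigma,\,({\rm I}-{\rm Q}_N)g_{\rm per}]^{\top}+({\cal K}-{\cal K}_{h,N})[f_\Sigma,f_{\rm per}]^{\top}$ is then bounded blockwise by the same two propositions. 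Your bookkeeping of which proposition controls which off-diagonal block, the role of Assumption 1, and the renaming of the free exponent to $r$ all match the paper's argument.
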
 
\begin{proof}
 From \eqref{eq:01:prop:5.5b} and \eqref{eq:01:prop:5.8b} in  Propositions  \ref{prop:5.4} and \ref{prop:5.8} we conclude
 \[
\|{\cal K}-{\cal K}_{h,N}\|_{H^{1/2}(\Sigma)\times H^{1/2}_{\rm per}\to H^{1/2}(\Sigma)\times H^{1/2}_{\rm per}}\to 0,\quad \text{as }(h,N)\to (0,\infty)
 \]
 which proves the first part of the theorem. 
 
 On the other hand, for small enough $c>0$, independent of $h$ and $N$, it holds
 \begin{equation}
  \begin{aligned}
  c\big(  \|f_\Sigma-f_\Sigma^h\|_{H^{1/2}(\Sigma)} & +
  \|f_{\rm per}-f_{\rm per}^N\|_{H^{1/2}_{\rm per}}\big)\ \le\  \left\|\left(
  {\cal I}-{\cal K}_{h,N}\right)\begin{bmatrix}
                          f_\Sigma-f_\Sigma^h\\
                          f_{\rm per}-f_{\rm per}^N
                          \end{bmatrix}
  \right\|_{H^{1/2}(\Sigma)\times H^{1/2}_{\rm per}}\\
   \le\ & \left\| \begin{bmatrix*}[r]
                         \gamma_{\Sigma} u^{\rm inc}- {\rm Q}^h_{\Sigma}\gamma_{\Sigma} u^{\rm inc}\\
                         u^{\rm inc} \circ{\bf x} -{\rm Q}_N  u^{\rm inc}\circ{\bf x}   
                          \end{bmatrix*}\right\|_{H^{1/2}(\Sigma)\times H^{1/2}_{\rm per}}
                          \\ 
                          & +                          
   \left\|\left({\cal K}- {\cal K}_{h,N}\right)\begin{bmatrix}
                          f_\Sigma\\
                          f_{\rm per}
                          \end{bmatrix}
  \right\|_{H^{1/2}(\Sigma)\times H^{1/2}_{\rm per}}
    \end{aligned}
 \end{equation}
 and   the estimate \eqref{eq:5.26} follows   from Propositions \ref{prop:5.4} and \ref{prop:5.8}.
\end{proof}

\begin{theorem}\label{theo:5.10}
Let  $(u, \omega) = (\KSigmaOmegaTwo f_\Sigma,\KGammaOmegaOneC f_\Gamma)$ be the exact solution of \eqref{eq:BEMFEM},  and let  $(u_h, \omega_N) 
:=(\KSigmaOmegaTwo^h f^h_\Sigma,\KGammaOmegaOneC^N f^N_{\rm per})$  be that of the FEM-BEM system \eqref{eq:NhBEMFEM}.  Then, for any compact set  $D\subset\mathbb{R}^d\setminus \overline{\Omega}_1$, $r\ge 0$, $t>d+1/2$ there exist $C$ such that
\begin{eqnarray}
& &  \hspace{-0.5in} \|u-u_h\|_{H^{1}(\Omega_2)}+ \| \omega-\omega_N\|_{H^{r}(D)} \nonumber \\ &\le& C\big(h_D^{d-\varepsilon} N^{-\varepsilon}+h_\Sigma^{d+1/2}+N^{-t}+h_D^d\big)\|u^{\rm inc}\|_{H^{t+1}(\Omega_2)}
 +C\inf_{v_h\in\mathbb{P}_{h,d}}\|u-v_h\|_{H^1(\Omega_2)}.\qquad \label{eq:estimate:01}
\end{eqnarray}
\end{theorem}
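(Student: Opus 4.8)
The plan is to treat the interior and exterior errors in \eqref{eq:estimate:01} separately and then feed the boundary-unknown estimate \eqref{eq:5.26} into both, finally converting every norm of $f_\Sigma$ and $f_{\rm per}$ into a norm of $u^{\rm inc}$ by the continuous stability of Theorem~\ref{theo:main:01}. The interior part is immediate: since $u=\KSigmaOmegaTwo f_\Sigma$ and $u_h=\KSigmaOmegaTwo^h f_\Sigma^h$, the FEM bound \eqref{eq:FEMconvergence} gives
\[
\|u-u_h\|_{H^1(\Omega_2)}\le C\Big[\inf_{v_h\in\mathbb{P}_{h,d}}\|u-v_h\|_{H^1(\Omega_2)}+\|f_\Sigma-f_\Sigma^h\|_{H^{1/2}(\Sigma)}\Big],
\]
whose first term is exactly the best-approximation term in \eqref{eq:estimate:01} and whose second term is supplied by \eqref{eq:5.26}.

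For the exterior part I would write $\omega-\omega_N=(\KGammaOmegaOneC f_\Gamma-\KGammaOmegaOneC^N f_{\rm per})+\KGammaOmegaOneC^N(f_{\rm per}-f_{\rm per}^N)$ and use crucially that $\overline D\cap\Gamma=\emptyset$, so that the potential $\Pi:={\rm DL}_k-{\rm i}k\,{\rm SL}_k$ and its Nystr\"om version $\Pi^N:={\rm DL}_k^N-{\rm i}k\,{\rm SL}_k^N$ map boundedly into $H^r(D)$ from $H^0_{\rm per}$ for every $r$. The first (fixed smooth-data) difference I split once more as $\Pi(\mathcal{L}_k-\mathcal{L}_k^N)f_{\rm per}+(\Pi-\Pi^N)\mathcal{L}_k^N f_{\rm per}$. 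The operator-consistency factor $(\mathcal{L}_k-\mathcal{L}_k^N)f_{\rm per}$ is bounded in $H^s_{\rm per}$, $s\in(1/2,1)$, by Theorem~\ref{Th:01} (where $\alpha=s$, so the rate is $N^{-t}$); the quadrature-consistency factor is reduced to a density in $\mathbb{T}_N$ using Remark~\ref{remark:3.1} and then bounded by Lemma~\ref{lemma:5.6}, the interpolation remainder $\Pi(\mathrm{I}-{\rm Q}_N)\mathcal{L}_k^N f_{\rm per}$ being measured in $H^0_{\rm per}$ via \eqref{eq:5.7}. Since $f_\Gamma=\gamma_\Gamma u$, and hence $f_{\rm per}$, is smooth ($u$ being analytic near $\Gamma$), all these terms are bounded by $CN^{-t}\|f_{\rm per}\|_{H^t_{\rm per}}$.

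The data-propagation term $\KGammaOmegaOneC^N(f_{\rm per}-f_{\rm per}^N)$ is handled by the uniform $H^r(D)$-stability of the discrete field operator: because $\KGammaOmegaOneC^N$ reads only grid values (Remark~\ref{remark:3.1}), its argument may be replaced by ${\rm Q}_N(f_{\rm per}-f_{\rm per}^N)={\rm Q}_N f_{\rm per}-f_{\rm per}^N\in\mathbb{T}_N$, on which Corollary~\ref{cor:5.6} and Lemma~\ref{lemma:5.6} yield $\|\KGammaOmegaOneC^N(f_{\rm per}-f_{\rm per}^N)\|_{H^r(D)}\le C\|{\rm Q}_N f_{\rm per}-f_{\rm per}^N\|_{H^{1/2}_{\rm per}}\le C\big(\|f_{\rm per}-f_{\rm per}^N\|_{H^{1/2}_{\rm per}}+\text{super-algebraic}\big)$. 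Collecting the interior and exterior bounds, the controlling quantity becomes $\|f_\Sigma-f_\Sigma^h\|_{H^{1/2}(\Sigma)}+\|f_{\rm per}-f_{\rm per}^N\|_{H^{1/2}_{\rm per}}$ plus the Term~A remainder; I then substitute \eqref{eq:5.26}, estimate its two data-interpolation terms by \eqref{eq:4.5b}, \eqref{eq:5.7} and the trace theorem as $C(h_\Sigma^{d+1/2}+N^{-t})\|u^{\rm inc}\|_{H^{t+1}(\Omega_2)}$, bound the factor $(h_D^{-\varepsilon}N^{-\varepsilon}+1)h^{1/2}$ using Assumption~1 so that the best-approximation and ${\rm P}_\Sigma^h$ terms are absorbed into $\inf_{v_h}\|u-v_h\|_{H^1(\Omega_2)}$ and a higher-order multiple of $h_\Sigma^{d+1/2}\|u^{\rm inc}\|$, and finally replace $\|f_\Sigma\|_{L^2(\Sigma)}$ and $\|f_{\rm per}\|_{H^t_{\rm per}}$ by $C\|u^{\rm inc}\|_{H^{t+1}(\Omega_2)}$ through Theorem~\ref{theo:main:01} and the trace theorem. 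This reproduces the coefficients $h_D^{d-\varepsilon}N^{-\varepsilon}$, $h_\Sigma^{d+1/2}$, $N^{-t}$ and $h_D^d$.

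The hard part is the exterior estimate in the strong interior norm $H^r(D)$ for arbitrary $r$, and the two intertwined subtleties are: (i) the Nystr\"om potentials are infinitely smoothing away from $\Gamma$, which is what lets a weak ($H^0_{\rm per}$) norm of the density error control $H^r(D)$ and, combined with the super-algebraic quadrature consistency of Lemma~\ref{lemma:5.6}, keeps the regularity budget at $\|f_{\rm per}\|_{H^t_{\rm per}}\le C\|u^{\rm inc}\|_{H^{t+1}(\Omega_2)}$; and (ii) Lemma~\ref{lemma:5.6} and Corollary~\ref{cor:5.6} apply only to trigonometric-polynomial densities, so one must first invoke Remark~\ref{remark:3.1} to reduce every argument of $\KGammaOmegaOneC^N$ to $\mathbb{T}_N$, isolating the genuine $H^{1/2}$-level error $\|f_{\rm per}-f_{\rm per}^N\|_{H^{1/2}_{\rm per}}$ that is fed once into \eqref{eq:5.26}. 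Everything else is a matching of rates already established in Theorems~\ref{theo:5.2} and \ref{Th:01} and Propositions~\ref{prop:5.4} and \ref{prop:5.8}.
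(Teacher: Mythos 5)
Your proposal is correct and follows essentially the same route as the paper's proof: the interior error via the FEM quasi-optimality \eqref{eq:FEMconvergence} with $\|f_\Sigma-f_\Sigma^h\|_{H^{1/2}(\Sigma)}$ supplied by \eqref{eq:5.26}, the exterior error via the infinite smoothing of the layer potentials away from $\Gamma$ (Lemma~\ref{lemma:5.6}) combined with Theorem~\ref{Th:01}, Corollary~\ref{cor:5.6} and \eqref{eq:5.7}, and finally Theorem~\ref{theo:main:01} to convert the data norms into $\|u^{\rm inc}\|_{H^{t+1}(\Omega_2)}$. Your bookkeeping differs only cosmetically (you split $\omega-\omega_N$ into consistency plus data-propagation at the field level and bound the ${\rm P}_\Sigma^h$ term directly by \eqref{eq:5.2} rather than by the paper's absorption argument), but the ingredients and the resulting rates are identical.
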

\begin{proof}
Notice that as consequence of Theorem \ref{theo:main:01}, and with $f_{\rm per} =f\circ{\bf x}$  as before, 
\[
 \|f_\Sigma\|_{H^{t+1/2}(\Sigma)}+ \|f_{\rm per}\|_{H^{t+1/2}_{{\rm per}}}\le C_t \|u^{\rm inc}\|_{H^{t+1}(\Omega_2)}, 
\]
and, cf. \eqref{eq:4.5} and \eqref{eq:5.7},
\begin{eqnarray*}
 \|{\gamma_{\Sigma} u^{\rm inc}-{\rm Q}^h_{\Sigma}\gamma_{\Sigma} u^{\rm inc}}\|_{H^{1/2}(\Sigma)} &\le&  C h_\Sigma^{d+1/2}\| u^{\rm inc}\|_{H^{t+1}(\Omega_2)},\\
  \|{ u^{\rm inc}\circ{\bf x} - {\rm Q}_N  u^{\rm inc}\circ{\bf x}  } \|_{ H^{1/2}_{\rm per}} &\le& C N^{-t}\| u^{\rm inc}\|_{H^{t+1}(\Omega_2)}.
\end{eqnarray*}

Since
\[
  \| f_\Sigma-{\rm P}_\Sigma^h f_\Sigma\|_{H^{1/2}(\Sigma)}\le C' \inf_{p_h\in \gamma_\Sigma \mathbb{P}_{h,d}}
  \| f_\Sigma-p_h\|_{H^{1/2}(\Sigma)}\le C' 
 \|f_\Sigma-f_\Sigma^h\|_{H^{1/2}(\Sigma)}, 
 \]
the estimate \eqref{eq:5.26} yields 
\begin{eqnarray}
 \|f_\Sigma-f_\Sigma^h\|_{H^{1/2}(\Sigma)}+\|f_{\rm per}-f_{\rm per}^N\|_{H^{1/2}_{\rm per}}  &\le& c(h,N)
 \|f_\Sigma-f_\Sigma^h\|_{H^{1/2}(\Sigma)}\nonumber  \\
 &&+  C\big(h_D^{d-\varepsilon} N^{-\varepsilon}+h_D^d+N^{-t}+h_\Sigma^{d+1/2}\big)\|u^{\rm inc}\|_{H^{t+1}(\Omega_2)} \nonumber 
 \\&&
 +C'\inf_{v_h\in\mathbb{P}_{h,d}}\|u-v_h\|_{H^1(\Omega_2)}\label{eq:4:31}.
\end{eqnarray}  
with $\changeV{c(h,N)}\to 0$ as $(h,N)\to (0,\infty)$. (Actually $C'$ above can be shown to tend to zero as  $(h,N)\to (0,\infty)$). Taking $h$ sufficiently small and $N$ large enough,  say such that $c(h,N)<1/2$,  the estimate for the error $u-u_h$ follows from  \eqref{eq:FEMconvergence} in Theorem \ref{theo:5.2}.

Regarding the other term, we notice that by Lemma \ref{lemma:5.6}
\begin{eqnarray}
\| \omega-\omega_N\|_{H^{r}(D)} &=& 
\|({\rm DL}_k-{\rm i }k {\rm SL}_k){\cal L}_k f_{\rm per} -({\rm DL}_k^N-{\rm i }k {\rm SL}_k^N){\rm Q}_N{\cal L}_k^N f_{\rm per}^N \|_{H^{r}(D)}\nonumber \\
&\le& C\left(\|{\cal L}_k f_{\rm per}-{\rm Q}_N{\cal L}_k f_{\rm per}\|_{H^{0}_{\rm per}}+\|{\rm Q}_N {\cal L}_k f_{\rm per}-{\rm Q}_N{\cal L}_k^N {f^N_{{\rm per}}}\|_{H^{0}_{\rm per}}\right).\label{eq:5.28}
\end{eqnarray}
\changeV{Recalling that ${\rm M}_k$ in \eqref{eq:def:M}  is a pseudo-differential operator of order $-1$,  we obtain the bound}
\[
 \|{\cal L}_k f_{\rm per}-{\rm Q}_N{\cal L}_k f_{\rm per}\|_{H^{0}_{\rm per}}=\|({\rm I}-{\rm Q}_N){\rm M}_k f_{\rm per}\|_{H^{0}_{\rm per}}\le C N^{-t-3/2}\|f_{\rm per}\|_{H^{t+1/2}_{\rm per}}. 
\]
Next, we claim that for the second term in \eqref{eq:5.28} the following bound holds
\begin{equation}\label{eq:5.29}
\|{\rm Q}_N {\cal L}_k f_{\rm per}-{\rm Q}_N{\cal L}_k^N {f^N_{{\rm per}}} \|_{H^{0}_{\rm per}}\le C\|   \ {\rm P}_N f_{\rm per}-{f^N_{{\rm per}}}\|_{H^{1/2}_{\rm per}}+ N^{-t-1/2}\|f_{\rm per} \|_{H^{t+1/2}_{\rm per}} 
 \end{equation}
which, with estimate \eqref{eq:4:31}, should prove the result. Indeed, writing 
\begin{eqnarray*}
{\rm Q}_N {\cal L}_k f_{\rm per}-
 {\rm Q}_N{\cal L}_k^N {f^N_{{\rm per}}}&&\\
 &&\hspace{-3cm} = \ 
 {\rm Q}_N{\cal L}_k (  f_{\rm per}-{\rm P}_N f_{\rm per})  +
  (  {\rm Q}_N {\cal L}_k {\rm P}_N f_{\rm per}-{\rm Q}_N{\cal L}_k^N {\rm P}_N f_{\rm per}) 
 +
 {\rm Q}_N{\cal L}_k^N ({\rm P}_N f_{\rm per}-{f^N_{{\rm per}}}), 
\end{eqnarray*}
and using 
\begin{eqnarray*}
 \|{\rm Q}_N{\cal L}_k (  f_{\rm per}-{\rm P}_N f_{\rm per})\|_{H^0_{\rm per}}&\le& 
 \|{\cal L}_k (  f_{\rm per}-{\rm P}_N f_{\rm per})\|_{H^0_{\rm per}} + C N^{-1} \|{\cal L}_k (  f_{\rm per}-{\rm P}_N f_{\rm per})\|_{H^1_{\rm per}}\\
 &\le& C'N^{-t-1/2}\|f_{\rm per} \|_{H^{t+1/2}_{\rm per}} 
\end{eqnarray*}
 for the first term,  \eqref{eq:01b:Th:01} in Theorem \ref{Th:01} (with $s=\alpha = 0$) for the second one and 
Corollary \ref{cor:5.6}  for the  third term, \eqref{eq:5.29} follows.
\end{proof}

\section{Numerical experiments}\label{sec:num-exp} 
This section comprises three sets of numerical experiments to demonstrate our FEM-BEM algorithm and analysis.
As proved in Theorem~\ref{theo:5.10}, convergence of the FEM-BEM numerical solution is dictated 
by the best approximation  last term in~\eqref{eq:estimate:01}. The best approximation accuracy depends on the
smoothness of the exact solution $u$ of~\eqref{eq:theproblem}, induced by smoothness of the refractive index $n$ in the heterogeneous region. 

The first set of experiments is for the smooth solution case to observe the fast convergence of our method under optimal conditions. 
Motivation for the second and third set of experimstents are from the Janus particle configurations with non-smooth (only piecewise-continuous)
refractive indices.  For the second and third experiments, the total wave has limited regularity and belongs to $H^2(\Omega_2)\setminus H^{5/2}(\Omega)$. 
\changeSecondRefereeG{(The {\em discontinuity}    of the refractive index ${n}$ in $\Omega_2$ leads to
 the interior wave-field limitation that $\Delta u\not\in H^{1/2}(\Omega_2)$.)}
For the latter cases, we also demonstrate that our FEM-BEM algorithm  converges, with lower convergence rates, and good accuracy can be obtained using high-order finite elements
(such as  the FEM space spanned by quadratic or cubic splines).  
In particular for the multi-particle Janus-type configuration  experiments, we demonstrate that even quadratic FEM is not sufficient, highlighting the difficulty associated
with Janus configurations for wave propagation models and the need for efficient high-order FEM-BEM to compute approximations for the practical QoI such as the 
DSCS and OA-DSCS. We recall that 
these QoI are defined  in~\eqref{eq:OA-DSCS} and in the algorithm, the far-field is computed using the density based ansatz in~\eqref{eq:far-z-N}.

Basic setup of the three sets of  experiments is similar. Starting from an initial, coarse mesh ${\cal T}_H$, we 
consider a set of meshes ${\cal T}_H\subset {\cal T}_{H/2}\subset \cdots  {\cal T}_{h}\subset {\cal T}_{h/2}\subset\cdots {\cal T}_{h_e}$ obtained from successive uniform  refinement where each triangle is divided into four elements. For the BEM solver we proceed analogously, consider an initial, relatively low number $N = N_0$, and then we double the points: $\{N_0, 2N_0,\ldots, N_e\}$. 

One of  aims of the numerical experiments is  to demonstrate the error estimates proved  in Theorem \ref{theo:5.10}. This is carried out as follows: Let ${\cal T}_{h_e}$ be the finest 
of the chosen finite element grids and  let $N_e$ be the largest of the chosen BEM  discrete parameters. We then compute the solution $(u_{h_e/2},\omega_{2N_e})$ using 
next refinement steps. 
We will use this pair as our reference exact solution, i.e. with the notation of Theorem \ref{theo:5.10},  $(u_{h_e/2},\omega_{2N_e})\approx (u,\omega)$ so that we compute 
\begin{equation}\label{eq:D-err}
 \|u_h-u_{h_{e}/2}\|_{H^1(\Omega_2)} ; \qquad  \qquad \| {\rm Q}_{h_{e}/2} (\omega_N-\omega_{2N_e})\|_{H^1(D)},
 \end{equation}
 as the $H^1-$error for the finite element solution (total wave) and boundary element solution (scattered wave). Here, ${{\rm Q}_{h_{e}/2}}$ is the Lagrange interpolation operator on $\mathbb{P}_{h_e/2,d}$
which means  ${\rm Q}_{h_{e}/2} u_{h_{e}/2} = u_{h_e/2}$ and ${{\rm Q}_{h_{e}/2}} u_h =u_h$ for any $u_h$ in our list of experiments. Since both $\omega_N$ and $\omega$ are smooth functions  as long as $\overline{D}\subset{\rm ext}(\Gamma)$ (the region exterior to $\Gamma$), such a strategy gives a sufficiently good estimate for the true error of the numerical solution.  

In our computations we have taken $D$ in~\eqref{eq:D-err} to be  ${\cal T}_{h_e/2}\cap {\rm Ext}(r\Gamma)\cap \Omega_2$, with $r=1.1$ to ensure no instabilities for evaluation of potentials in~\eqref{eq:4.3}. Mesh generation in our experiments were obtained using the open-source  package  GMSH~\cite{GeRe:2009},  that is efficient  for generation of triangular meshes on complex geometries $\Omega_0$. Our choice of a complex  $\Omega_0$ for the third set of  experiments is  inspired by  the complex structure of Baby-Yoda 
({apprentice Grogu} from Disney's Mandalorian TV series),
and we use this geometry as  an illustration of  the flexibility and efficiency of our FEM-BEM approach, and the ability to approximate non-smooth fields from complex heterogeneous media. The  total field  induced by the Baby-Yoda based  piecewise continuous refractive index illustrates the complexity of the  wave propagation
model~\eqref{eq:theproblem}.

We simulated numerical experiments using the quadratic ($\mathbb{P}_2$) and  cubic ($\mathbb{P}_3$)  spline elements  for FEM,
and for several different values for the BEM parameter $N$.
Thus the  number of BEM degrees of freedom (DoF) for the $\Gamma$ boundary unknown function is $2N$ and, throughout this section, we denote $L$ as the FEM DoF in $\Omega_2$ and $M$ as the number of Dirichlet constrained nodes on $\Sigma$.  
Clearly $M << L$ (since $M \approx \sqrt{L}$),  and because $\Gamma$ is smooth boundary,  the spectral accuracy of the  Nystr\"om BEM implies that $2N << M$.  
Our FEM-BEM framework involving only relatively small algebraic linear systems~\eqref{eq:NhBEMFEM:0}, for the ($2N+M$)  boundary unknowns,  were solved 
iteratively using GMRES with a very low tolerance of $10^{-9}$,  to ensure that the reported errors corresponds  to  the method itself and not from an approximation of the algebraic system {solutions}. 

 For a desired level of accuracy, choices of the discretization parameters crucially depend on the  wavelength  of the problem, which in turn depends 
on the variable coefficient $k^2n^2$ of the model~\eqref{eq:theproblem} and the size of computational regions for the main unknowns (that determine the DoF in algebraic systems). For the heterogeneous and unbounded region model 
problem~\eqref{eq:theproblem},  our efficient FEM-BEM framework reduces the computational regions to just  bounded  regions $\overline{\Omega_2}$ and $\Sigma$. We recall that 
 if $\Omega_{2, {\rm diam}}$ denotes  the diameter  of the  domain $\Omega_2$ and $n_{\max}$ is the maximum value of the refractive index $n$, for our FEM-BEM framework, the FEM  interior problem wavelength   is $\left (k\times n_{\max} \times \Omega_{2, {\rm diam}}\right )/(2 \pi)$; and the BEM  exterior problem wavelength is $\left (k\times \Gamma_{{\rm diam}} \right )/(2 \pi)$, where $\Gamma_{{\rm diam}}$ is the length of the artificial boundary curve $\Gamma$.

\subsection{Experiment \#1 (Smooth refractive index and smooth solution)}
\begin{figure}[!ht]
\[
\includegraphics[width=0.6\textwidth]{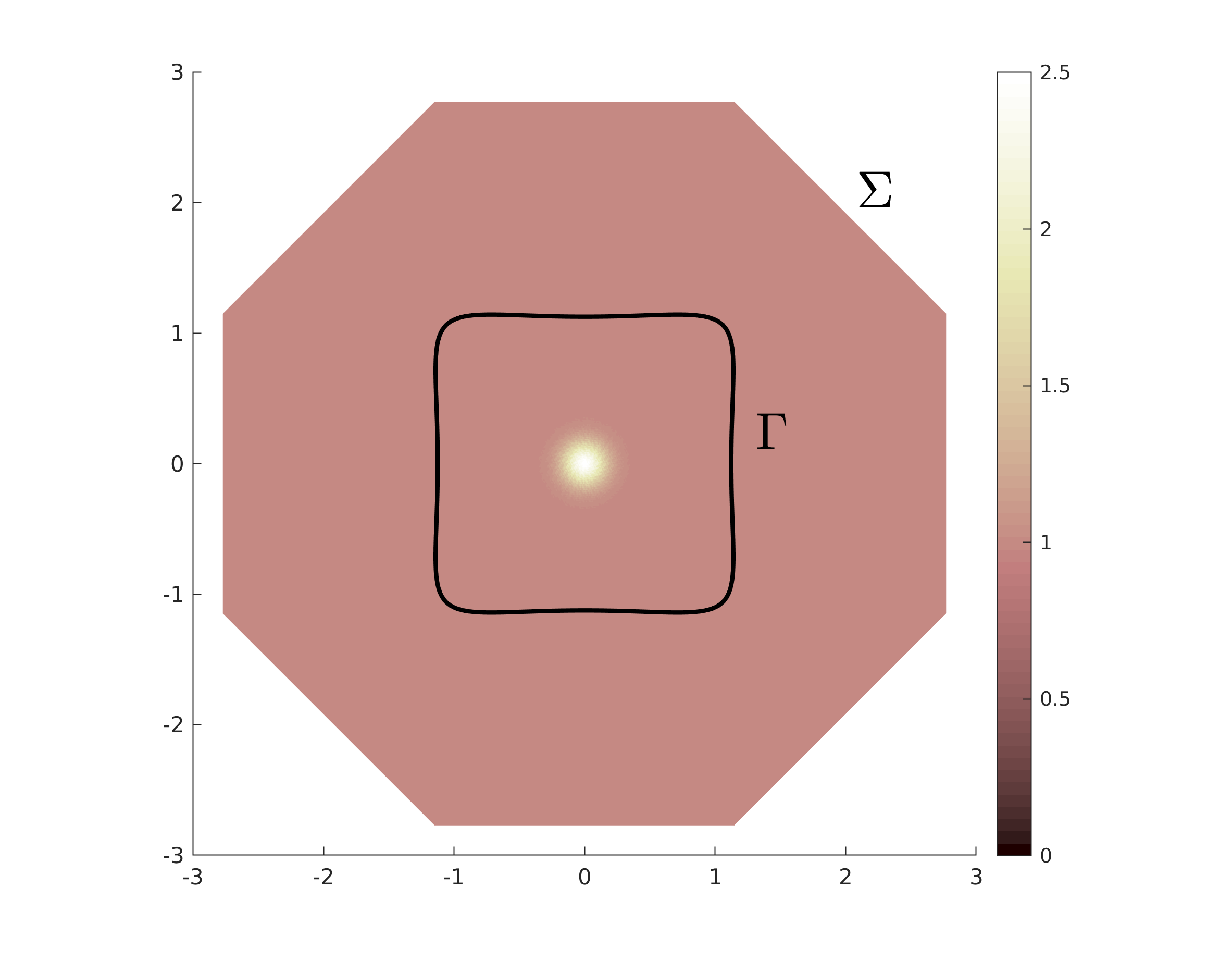}
\]

\vspace{-0.5in}
\caption{\label{fig:exp:01} {\em Experiment \#1  smooth radial refractive index function (with maximum at origin)  and  a decomposition framework comprising curves $\Gamma$ and $\Sigma$.  The octagonal shaped polygonal domain $\Omega_2$, with  boundary  $\Sigma$,  is centered at origin and has circumradius 3.  
$\Gamma$ is a smooth  rounded-squared curve. The framework illustrate flexibility of  artificial  curve  choices $\Gamma$ and $\Sigma$.}}
\end{figure}


In the first set of experiments we choose the example refractive index induced by  a Gaussian smoothing window function in the radial variable $r = |\x|,~ \x = (x,y) \in \mathbb{R}^2$:
\[
 n^2(x,y)\equiv n^ 2( r) = \begin{cases}
 1+1.5\exp(-40 r^2),& r\in[0,1],\\
 1,& r>1.
               \end{cases}
\]
 Since  the value above function at $r = 1$  is less than $1 +\epsilon$ (where $\epsilon$ is the machine-epsilon), the refractive index can be considered as 
numerically smooth.
This example function and our numerical experiments FEM-BEM decomposition framework with artificial curves are demonstrated in Figure~\ref{fig:exp:01}. 
For the first set of experiments to demonstrate accuracy of the FEM-BEM approximations, the incident wave is a plane wave with wavenumber $k = 5$ and direction $\ddh = (1,0)$.  
The rounded-square smooth curve $\Gamma$ in  Figure~\ref{fig:exp:01} is obtained using the \changeV{$2\pi$-periodic} parametrization 
\begin{equation}\label{eq:Gamma:exp:01}
{\bf x}(t) = \frac{4}{5\sqrt{2}}( (1+\cos^2 t)\cos t+(1+\sin^2 t)\sin t , -(1+\cos^2 t)\cos t+(1+\sin^2 t)\sin t),\quad t\in[0,2\pi].
\end{equation}

The initial (coarsest) grid comprise  4,512 triangles   for both $\mathbb{P}_2$ and $\mathbb{P}_3$ elements  and we used  up to five uniform refinements.
For $N$, the discrete parameter for the BEM part, the initial value $N_0= 20$ was refined five times (by doubling). 
Accuracy of the the first set of  simulated  FEM and BEM solutions in the $H^1$-norm are displayed in Table~\ref{tab:exp:01}.
According to Theorem~\ref{theo:5.10}, for the Experiment \#1  choices, the  estimated
rate of convergence of the combined FEM-BEM solution in the $H^1$-error is dominated by the optimal rate $h^d$ in the  $\mathbb{P}_d$ element space
for $d = 2,3$. 
For fixed and sufficiently high $N (= 160)$ case, this estimated convergence can be observed from the last row in Table~\ref{tab:exp:01}, as the FEM mesh size $h$ is reduced by two with mesh refinement 
(and corresponding $L$ increase)  the FEM solution errors decrease  approximately by $(1/2)^d$-times, for $d = 2, 3$. The spectral accuracy of
the BEM solution can be observed from the last column in Table~\ref{tab:exp:01} with high accuracy achieved using relative  small BEM DoF. 
For the  first set of experiments, the GMRES iterations convergence was attained using a small number ($m$)  of iterations  with $16 \leq m \leq 20$.  


 \begin{table}[!ht]
 \begin{center} \tt  \setlength{\tabcolsep}{4pt}\footnotesize
 \begin{tabular}{l|c|c|c|c|c|c|c|c|c|c}
    \changeSecondReferee{\diagbox[width=\dimexpr \textwidth/16\relax, height=.8cm]{$N$}{$L$}}
   &          
\multicolumn{2}{c|}{9,185} &  \multicolumn{2}{c|}{ 36,417   } &  \multicolumn{2}{c|}{  145,025} & \multicolumn{2}{c|}{    578,817  } 
              &  \multicolumn{2}{c }{2,312,705}\\ 
 \hline 
              &       {\rm FEM}   &  {\rm BEM} &   {\rm FEM}  &  {\rm BEM}  &   {\rm FEM}  &  {\rm BEM} &  {\rm FEM}   &  {\rm BEM} & {\rm FEM}  &  {\rm BEM} \\
   020        &        2.4e-01   &  6.4e-02   &    6.1e-02  &   5.5e-02   &  1.5e-02    &    5.4e-02  &   3.7e-03  &   5.4e-02  &  2.2e-05  &  5.4e-02 \\
   040        &        2.4e-01   &  3.7e-02   &    6.1e-02  &   5.7e-03   &  1.5e-02    &    7.0e-04  &   3.7e-03  &   3.6e-04  &  2.1e-05  &  3.5e-04 \\
   080        &        2.4e-01   &  2.8e-02   &    6.1e-02  &   4.3e-03   &  1.5e-02    &    4.6e-04  &   3.7e-03  &   6.7e-05  &  2.1e-05  &  4.8e-06 \\
   160        &        2.4e-01   &  2.7e-02   &    6.1e-02  &   2.7e-03   &  1.5e-02    &    2.4e-04  &   3.7e-03  &   3.2e-05  &  2.1e-05  &  2.5e-06 \\
  \end{tabular}
\end{center}

   \begin{center} \tt  \setlength{\tabcolsep}{4pt}\footnotesize
 \begin{tabular}{r|c|c|c|c|c|c|c|c|c|c }
    \changeSecondReferee{\diagbox[width=\dimexpr \textwidth/16\relax, height=.8cm]{$N$}{$L$}}
 &  \multicolumn{2}{c|}{20,545}&
\multicolumn{2}{c|}{81,697} &  \multicolumn{2}{c|}{  325,825} & \multicolumn{2}{c|}{    1,301,377  } 
              &  \multicolumn{2}{c }{5,201,665}\\ 
\hline 
              &      {\rm FEM}  &  {\rm BEM} & {\rm FEM}   &  {\rm BEM} &   {\rm FEM}  &  {\rm BEM}   &     {\rm FEM} &  {\rm BEM} & {\rm FEM}     &  {\rm BEM} \\ 
   020        &      1.2e-02   &  5.6e-02   &  1.5e-03   & 5.6e-02    &   2.6e-04   &    5.6e-02   &     2.1e-04  &   5.6e-02  &     2.0e-04  &   5.06-02\\
   040        &      1.2e-02   &  1.3e-03   &  1.4e-03   & 3.8e-04    &   1.7e-04   &    3.4e-04   &     2.1e-05  &   3.4e-04  &     1.1e-06  &   3.4e-04\\
   080        &      1.2e-02   &  1.1e-03   &  1.4e-03   & 1.3e-04    &   1.7e-04   &    6.9e-06   &     2.1e-05  &   9.2e-08  &     1.1e-06  &   2.7e-08\\
   160        &      1.3e-02   &  1.2e-03   &  1.4e-03   & 9.5e-05    &   1.7e-04   &    3.5e-06   &     2.1e-05  &   7.0e-08  &     1.1e-06  &   1.9e-08 
  \end{tabular}                     
\end{center}                                                                                      

\caption{\label{tab:exp:01}  {\em Experiment \#1 results ($k = 5$). Estimated $H^1-$ error for  total wave in  $\Omega_2$ (FEM-part) and for scattered wave  away from $\Gamma$ (BEM-part) for 
$\mathbb{P}_2$  (top), $\mathbb{P}_3$ (bottom) elements.} }
     \end{table}

 For the case $k =5$, the Experiment \#1 interior problem  wavelength 
is approximately $6.2$. We also performed  higher frequency simulations, with $k = 10, 20$, we observed similar FEM-BEM convergence rates 
similar to that in Table~\ref{tab:exp:01} for the smooth solution Experiment \#1. For the non-smooth solution cases, as we shall demonstrate 
below for $k = 5, 10, 20$ cases, sufficiently fine FEM mesh and also  high-order finite elements are required to obtained good accuracy. 
      
\subsection{Experiment \#2 (Janus-type configuration non-smooth solution)}
\begin{figure}[!ht]
\[
 \includegraphics[width=0.6\textwidth]{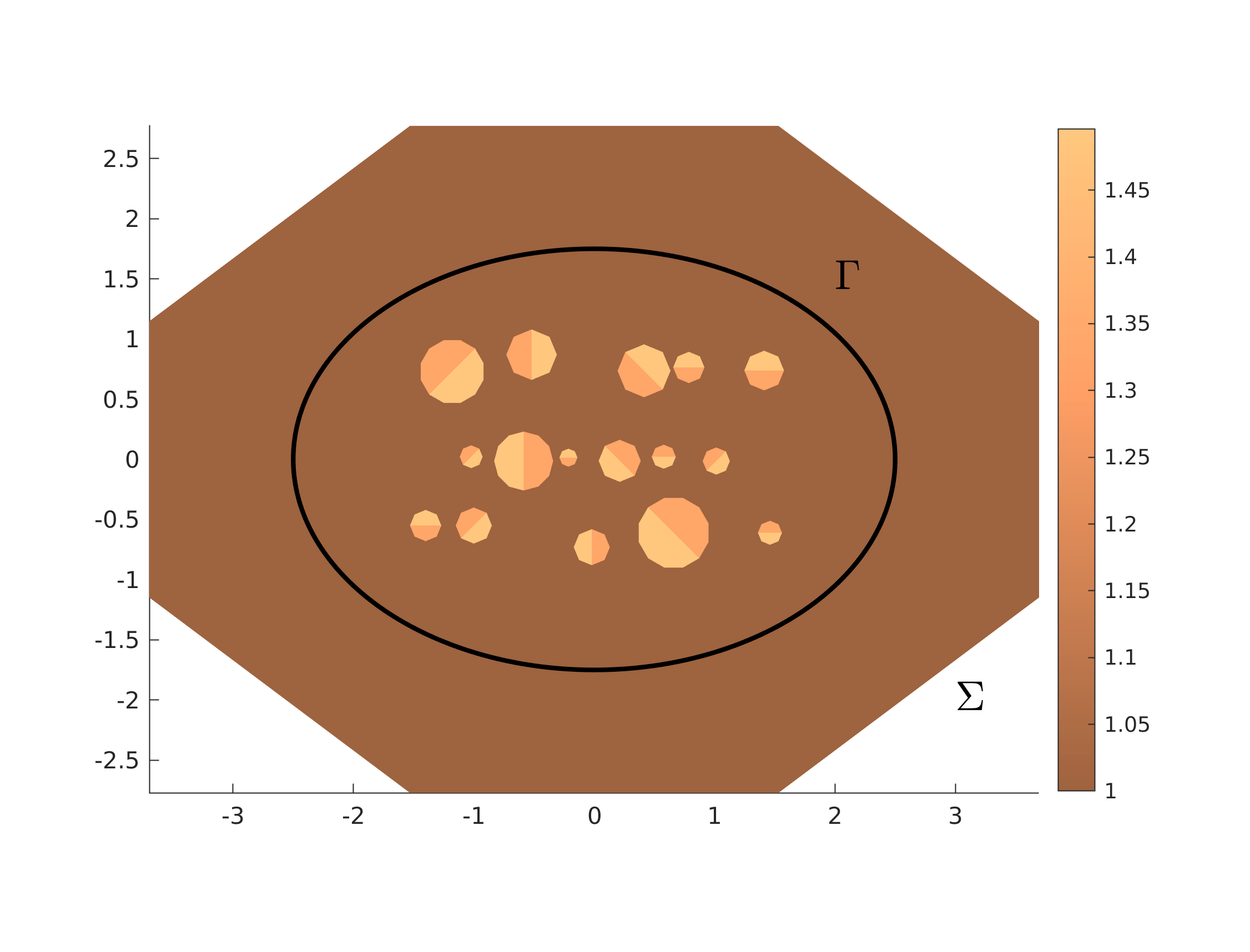} 
 \]
 
 \vspace{-0.5in}
\caption{\label{fig:janus:01} {\em Experiment \#2 setup comprises a Janus-type configuration with $16$ Janus particles (each having a local piecewise-constant refractive index) of distinct sizes and non-smooth shapes.  The FEM  octagonal shaped polygonal domain $\Omega_2$, with  artificial boundary  $\Sigma$,  is centered at origin and has circumradius 3.5.  
The BEM smooth artificial  boundary  $\Gamma$ is an elliptical curve and the ellipse circumscribes the multi-particle  heterogeneous Janus configuration.} }
\end{figure}

For this set of experiments, a disconnected heterogeneous medium illustrated in Figure~\ref{fig:janus:01}, is induced by a collection of non-smooth (polygonal)  Janus particles of different sizes,
and each particle is designed using two distinct materials/liquids~\cite{jan_appl_4}. As described in Section~\ref{sec:intro}, this is a model problem for the so-called Janus configuration which has received much attention in recent years. 
For the  Figure~\ref{fig:janus:01} Janus configuration based second set of numerical experiments, we used the following  non-smooth (piecewise-continuous) refractive index  
function, defined in the  full unbounded wave propagation medium, for $(x,y) \in \mathbb{R}^2$,   as 
\begin{equation}\label{eq:janus-n}
 n(x,y) = \begin{cases}
           1.333  & \text{$(x,y)\in$ first halves of Janus particles},\\
           1.496  & \text{$(x,y) \in$ second halves of Janus particles},\\
           1      & \text{Outside the collection of  Janus particles}.
          \end{cases}
\end{equation}
In~\eqref{eq:janus-n}, we recall that  the numbers $1.333$ and $1.496$ correspond to, respectively, the refractive index of water and toluene  at 20C; 
such two chemicals mix  have been used in the literature to build  Janus configurations~\cite{jan_appl_4}. Janus configuration 
based numerical  investigations~\cite{jan_appl_1, jan_appl_2, jan_appl_3, jan_appl_4,Jan_sph_20_book, Jan_sph_20_pap} have been mainly restricted to simple shapes, and results in this section further demonstrate that such (shape, size, and 
number of particles) restrictions can  be removed using efficient algorithms that can accommodate complex heterogeneous  structures with efficient decomposition framework.  The simulated Experiment \#2 configuration with a decomposition framework is depicted in Figure~\ref{fig:janus:01}.

For these  experiments we chose medium and high frequencies with $k= 5, 10, 20$ (corresponding to, respectively, approximate interior problem  wavelengths $8.3, 16.6, 33.2$)
with the coarse mesh for  FEM comprising 1401 triangles. For a fixed incident plane wave case,  we took the direction to be $\ddh = (1,0)$. For simulating  the OA-DSCS of the Janus configuration, we chose  one thousand equally-spaced incident directions $\ddh(\phi)$ surrounding the configuration that corresponds to one
thousand of the configuration orientations, with equally spaced  orientation angles  $\phi \in [0, 2\pi)$, starting from  the initial Janus configuration in Figure~\ref{fig:janus:01} (with $\phi = 0$).  

The choice of equally spaced direction angles facilitates high-order approximations to the  integral in~\eqref{eq:OA-DSCS} to compute the OA-DSCS using the rectangle quadrature.
We recall Remark~\ref{rem:multi} to highlight that  our discrete FEM-BEM algorithm is efficient 
for computing solutions with a large number of incident waves. The first set of experiments for the Janus configuration is performed with $\phi = 0$ to ensure that high-order approximations of the 
DSCS integrand in~\eqref{eq:OA-DSCS} can be computed using our FEM-BEM algorithm. We recall
that computation of the  far-field in~\eqref{eq:OA-DSCS}  is a smooth post-processing of the field density on $\Gamma$, and hence we expect  that the accuracy of  far-field approximations should be better, after ensuring convergence and good accuracy of the Janus configuration based FEM-BEM solution.

Our simulation results (for $k = 5, 10, 20$), demonstrating the  FEM-BEM solution accuracy for the ($\phi = 0$) Janus configuration in Figure~\ref{fig:janus:01},
obtained using the  $\mathbb{P}_2$ elements are in Table~\ref{tab:exp2:P2},  and the counterpart $\mathbb{P}_3$ elements results are in Table~\ref{tab:exp2:P3}. 
Because of the restricted regularity of the exact scattered field induced by only  piecewise-continuity of the refractive index in~\eqref{eq:janus-n}, it can be observed 
from Tables~\ref{tab:exp2:P2}-\ref{tab:exp2:P3}, the 
limited (second-order) rate of convergence of the FEM solution does not improve by using $\mathbb{P}_3$ elements compared to that for $\mathbb{P}_2$ elements. However,
because of the restricted regularity, it is important to use high-order $\mathbb{P}_3$ elements especially for the higher frequency ($k >10$) cases. This is because, for example 
with $k=20$ (despite using over one million FEM DoF $L$), the $\mathbb{P}_2$ elements based FEM solutions 
with over $100\%$ errors observed in the last row of Table~\ref{tab:exp2:P2} are not acceptable, and for the same situation the 
$\mathbb{P}_3$ elements provide much better accurate solutions.

In Figure~\ref{fig:janus-fields} we visualize the total field
in $\Omega_2$ and the scattered field in the overlapped region $(\mathbb{R}^2 \setminus \overline{\Omega}_1)\cap \Omega_2$,  and in
Figure~\ref{fig:janus-fields-error}, we demonstrate the constraint of (numerically) matching the FEM and BEM solutions  in 
the overlapped region by plotting the error in the region in log-scale. 
These sets of experiments illustrate the computational difficulties 
associated with  Janus configurations based wave models, especially taking into account that linear  $\mathbb{P}_1$  elements are standard for  pure FEM algorithms  (that do not  satisfy the SRC)~\cite{Ihlenburg:1998}.  

Further advantages of our FEM-BEM algorithm to compute high-order approximations to  smooth far-fields, and hence
the DSCS (with fixed incident direction angle $\phi = 0$),  are  demonstrated in Table~\ref{tab:DSCS}.  Recall that the far-field is a smooth function defined on $\sphere$, and the
uniform norm errors in the far-field  were approximated by evaluating the DSCS at $1440$ equally distributed points in  $\sphere$.
Because of the smooth post-processing of the FEM-BEM solutions to
compute far-fields, similar to the  $k=5$  results in Table~\ref{tab:DSCS}, we observed higher accuracy for the $k = 10, 20$ cases compared to the FEM-BEM solutions accuracy.  We conclude the Experiment \#2 results in Figure~\ref{fig:YAF} showing the  OA-DSCS  as a  function of the observed scattering  DSCS angles (in degrees) for $k = 5, 10, 20$.

\begin{table}
 \begin{center} \tt  \setlength{\tabcolsep}{4pt}\footnotesize
 \begin{tabular}{r|c|c|c|c|c|c|c|c|c|c }
   
    \changeSecondReferee{\diagbox[width=\dimexpr \textwidth/16\relax, height=.8cm]{$N$}{$L$}}                &  \multicolumn{2}{c|}{5,531}&
\multicolumn{2}{c|}{21,981 } &  \multicolumn{2}{c|}{  87,641 } & \multicolumn{2}{c|}{   350,001  } & \multicolumn{2}{c }{   1,398,881}\\ 
\hline 
              &      {\rm FEM}  &  {\rm BEM} & {\rm FEM}   &  {\rm BEM} &   {\rm FEM}  &  {\rm BEM}   &     {\rm FEM} &  {\rm BEM} & {\rm FEM}     &  {\rm BEM} \\ 
   020        &      1.6e+00   &  2.0e+00   &  3.2e-01   & 2.7e-01    &   7.8e-02   &    2.3e-01   &     1.9e-02  &   2.3e-01  &     5.2e-03  &   2.3e-01\\
   040        &      1.6e+00   &  1.9e+00   &  3.2e-01   & 1.3e-01    &   7.8e-02   &    1.3e-02   &     1.9e-02  &   1.2e-03  &     4.7e-03  &   6.1e-04\\
   080        &      1.6e+00   &  1.8e+00   &  3.1e-01   & 1.2e-01    &   7.7e-02   &    9.6e-03   &     1.9e-02  &   8.7e-04  &     4.7e-03  &   8.4e-05\\
   160        &      1.6e+00   &  1.8e+00   &  3.1e-01   & 1.1e-01    &   7.7e-02   &    7.6e-03   &     1.9e-02  &   7.9e-04  &     4.7e-03  &   5.4e-05 
  \end{tabular}                     
\end{center}         
 \begin{center} \tt  \setlength{\tabcolsep}{4pt}\footnotesize
 \begin{tabular}{r|c|c|c|c|c|c|c|c|c|c }
   
    \changeSecondReferee{\diagbox[width=\dimexpr \textwidth/16\relax, height=.8cm]{$N$}{$L$}}                &  \multicolumn{2}{c|}{5,531}&
\multicolumn{2}{c|}{21,981 } &  \multicolumn{2}{c|}{  87,641 } & \multicolumn{2}{c|}{   350,001  } & \multicolumn{2}{c }{   1,398,881}\\ 
\hline 
              &      {\rm FEM}  &  {\rm BEM} & {\rm FEM}   &  {\rm BEM} &   {\rm FEM}  &  {\rm BEM}   &     {\rm FEM} &  {\rm BEM} & {\rm FEM}     &  {\rm BEM} \\ 
   020        &      7.9e+01   &  7.8e+01   & 4.9e+01    &  4.4e+01   &    4.3e+01  &   4.0e+01    &     4.0e+01  &     4.0e+01&     1.6e+01  &   4.0e+01  \\
   040        &      7.2e+01   &  5.5e+01   & 1.7e+01    &  1.7e+01   &    5.3e+00  &   5.7e+00    &     5.6e-01  &     5.6e-01&     5.4e-01  &   3.8e-02  \\
   080        &      7.3e+01   &  5.5e+01   & 1.6e+01    &  1.7e+01   &    5.1e+00  &   5.4e+00    &     4.8e-01  &     4.8e-01&     4.8e-01  &   3.2e-02  \\
   160        &      7.3e+01   &  5.6e+01   & 1.6e+01    &  1.7e+01   &    4.8e+00  &   5.1e+00    &     4.5e-01  &     4.5e-01&     4.5e-01  &   2.8e-02   
  \end{tabular}      
\end{center} 
   \begin{center} \tt  \setlength{\tabcolsep}{4pt}\footnotesize
 \begin{tabular}{r|c|c|c|c|c|c|c|c|c|c }
   
    \changeSecondReferee{\diagbox[width=\dimexpr \textwidth/16\relax, height=.8cm]{$N$}{$L$}}                &  \multicolumn{2}{c|}{5,531}&
\multicolumn{2}{c|}{21,981 } &  \multicolumn{2}{c|}{  87,641 } & \multicolumn{2}{c|}{   350,001  } & \multicolumn{2}{c }{   1,398,881}\\ 
\hline 
              &      {\rm FEM}  &  {\rm BEM} & {\rm FEM}   &  {\rm BEM} &   {\rm FEM}    &  {\rm BEM}   &     {\rm FEM}  &  {\rm BEM} & {\rm FEM}     &  {\rm BEM} \\ 
   020        &      6.0e+02   &  2.3e+02   &  1.5e+03   &  5.75e+02    &   1.1e+03   &    4.6e+02   &    1.6e+03  &   6.3e+02   &   1.4e+03   &   5.4e+02\\
   040        &      4.5e+02   &  1.5e+02   &  2.8e+02   &  1.35e+02    &   3.6e+02   &    1.7e+02   &    1.3e+03  &   7.2e+02   &   1.6e+03   &   8.5e+02\\
   080        &      3.3e+02   &  1.2e+02   &  2.3e+02   &  1.14e+02    &   4.1e+01   &    2.2e+01   &    1.2e+01  &   4.7e+00   &   1.2e+00   &   4.6e-01\\
   160        &      3.2e+02   &  1.2e+02   &  2.4e+02   &  1.17e+02    &   4.0e+01   &    2.2e+01   &    1.2e+01  &   4.5e+00   &   1.1e+00   &   4.2e-01 
  \end{tabular}     
\end{center} 
\caption{\label{tab:exp2:P2} {\em Experiment \#2 results using $\mathbb{P}_2$   elements with $k =5$ (top), $k=10$ (mid), and $k= 20$ \changeSecondReferee{\changeSecondReferee{(bottom)}}. Estimated $H^1-$ error for the total wave in $\Omega_2$  (FEM-part) and for the scattered wave away from $\Gamma$ (BEM-part).}}

\end{table}

 \begin{table}
 \begin{center} \tt  \setlength{\tabcolsep}{4pt}\footnotesize
 \begin{tabular}{r|c|c|c|c|c|c|c|c|c|c }
   
    \changeSecondReferee{\diagbox[width=\dimexpr \textwidth/16\relax, height=.8cm]{$N$}{$L$}}                &  \multicolumn{2}{c|}{12,391}&
\multicolumn{2}{c|}{49,351} &  \multicolumn{2}{c|}{ 196,981} & \multicolumn{2}{c|}{   787,081  } & \multicolumn{2}{c }{   3,146,641}\\ 
\hline 
              &      {\rm FE}  &  {\rm BEM} & {\rm FE}   &  {\rm BEM} &   {\rm FE}  &  {\rm BEM}   &     {\rm FE} &  {\rm BEM} & {\rm FE}     &  {\rm BEM} \\ 
   020        &      1.4e-01    &  3.3e-01   &  1.7e-02   & 3.1e-01    &   3.1e-03  &   3.1e-01    &     2.1e-03  &   3.1e-01  &     2.9e-03  &   3.1e-01\\
   040        &      1.3e-01    &  9.4e-02   &  1.7e-02   & 5.2e-03    &   2.3e-03  &   8.2e-04    &     3.9e-04  &   8.0e-04  &     7.9e-05  &   8.0e-04\\
   080        &      1.3e-01    &  7.8e-02   &  1.7e-02   & 4.2e-03    &   2.3e-03  &   2.1e-04    &     3.9e-04  &   1.1e-05  &     7.9e-05  &   1.5e-06\\
   160        &      1.3e-01    &  7.8e-02   &  1.7e-02   & 3.7e-03    &   2.3e-03  &   2.0e-04    &     3.9e-04  &   1.2e-05  &     7.9e-05  &   1.5e-06 
  \end{tabular}                     
\end{center}

 \begin{center} \tt  \setlength{\tabcolsep}{4pt}\footnotesize
 \begin{tabular}{r|c|c|c|c|c|c|c|c|c|c }
   
    \changeSecondReferee{\diagbox[width=\dimexpr \textwidth/16\relax, height=.8cm]{$N$}{$L$}}                &  \multicolumn{2}{c|}{12,391}&
\multicolumn{2}{c|}{49,351} &  \multicolumn{2}{c|}{ 196,981} & \multicolumn{2}{c|}{   787,081  } & \multicolumn{2}{c }{   3,146,641}\\  
\hline 
              &      {\rm FE}  &  {\rm BEM} & {\rm FE}   &  {\rm BEM} &   {\rm FE}  &  {\rm BEM}   &     {\rm FE} &  {\rm BEM} & {\rm FE}     &  {\rm BEM} \\ 
   020        &      4.9e+01   &  5.7e+01   & 4.3e+01    &  5.2e+01   &   4.3e+01  &   5.2e+01    &      4.3e+01  &     5.2e+01&     4.3e+01  &   5.2e+01  \\
   040        &      8.3e+00   &  1.2e+01   & 9.8e-01    &  1.3e+00   &   5.2e-02  &   5.8e-02    &      7.5e-03  &     9.6e-03&     5.2e-03  &   8.9e-03  \\
   080        &      7.9e+00   &  1.1e+01   & 1.0e+00    &  1.4e+00   &   4.1e-02  &   3.7e-02    &      4.3e-03  &     1.3e-03&     5.8e-04  &   7.8e-05  \\
   160        &      8.0e+00   &  1.1e+01   & 9.6e-01    &  1.3e+00   &   4.0e-02  &   3.5e-02    &      4.2e-03  &     6.5e-04&     5.8e-04  &   5.0e-05   
  \end{tabular}      
\end{center}

   \begin{center} \tt  \setlength{\tabcolsep}{4pt}\footnotesize
 \begin{tabular}{r|c|c|c|c|c|c|c|c|c|c }
   
    \changeSecondReferee{\diagbox[width=\dimexpr \textwidth/16\relax, height=.8cm]{$N$}{$L$}}                &  \multicolumn{2}{c|}{12,391}&
\multicolumn{2}{c|}{49,351} &  \multicolumn{2}{c|}{ 196,981} & \multicolumn{2}{c|}{   787,081  } & \multicolumn{2}{c }{   3,146,641}\\ 
\hline 
              &      {\rm FE}  &  {\rm BEM} & {\rm FE}   &  {\rm BEM} &   {\rm FE}    &  {\rm BEM}   &     {\rm FE}  &  {\rm BEM} & {\rm FE}  &  {\rm BEM} \\ 
   020        &      1.5e+03  & 6.7e+02  &    2.5e+03  &   1.2e+03  &   1.4e+03    &   6.7e+02   &    1.4e+03    &    6.5e+02   &   1.4e+03   &    6.5e+02\\
   040        &      9.6e+02  & 5.3e+02  &    7.5e+02  &   4.1e+02  &   1.5e+03    &   1.0e+03   &    1.6e+03    &    1.1e+03   &   1.6e+03   &    1.1e+03\\
   080        &      6.3e+02  & 3.8e+02  &    6.3e+01  &   4.1e+01  &   1.4e+00    &   6.9e-01   &    7.9e-02    &    2.5e-02   &   1.4e-02   &    6.5e-03\\
   160        &      6.6e+02  & 3.9e+02  &    6.0e+02  &   3.9e+01  &   1.4e+00    &   7.3e-01   &    7.7e-02    &    2.4e-02   &   8.2e-03   &    9.9e-04 
  \end{tabular}    
\end{center}  
\caption{\label{tab:exp2:P3} {\em Experiment \#2 results using $\mathbb{P}_3$   elements with $k =5$ (top), $k=10$ (mid), and $k= 20$ \changeSecondReferee{(bottom)}. Estimated $H^1-$ error for the total wave in $\Omega_2$  (FEM-part) and for the scattered wave away from $\Gamma$ (BEM-part).}}

 \end{table}

 \begin{figure}     
 \[
  \includegraphics[width=0.52\textwidth]{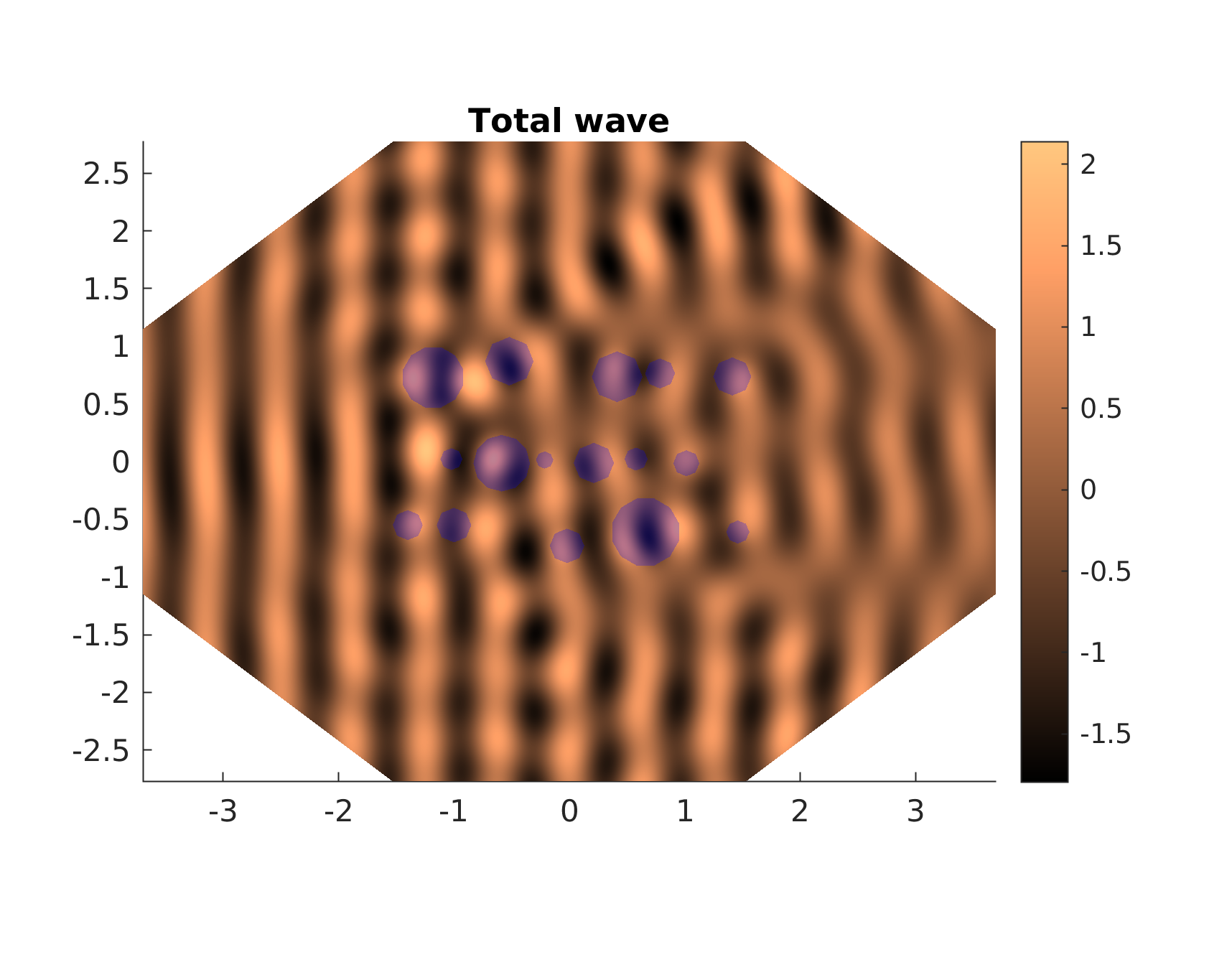}
  \includegraphics[width=0.52\textwidth]{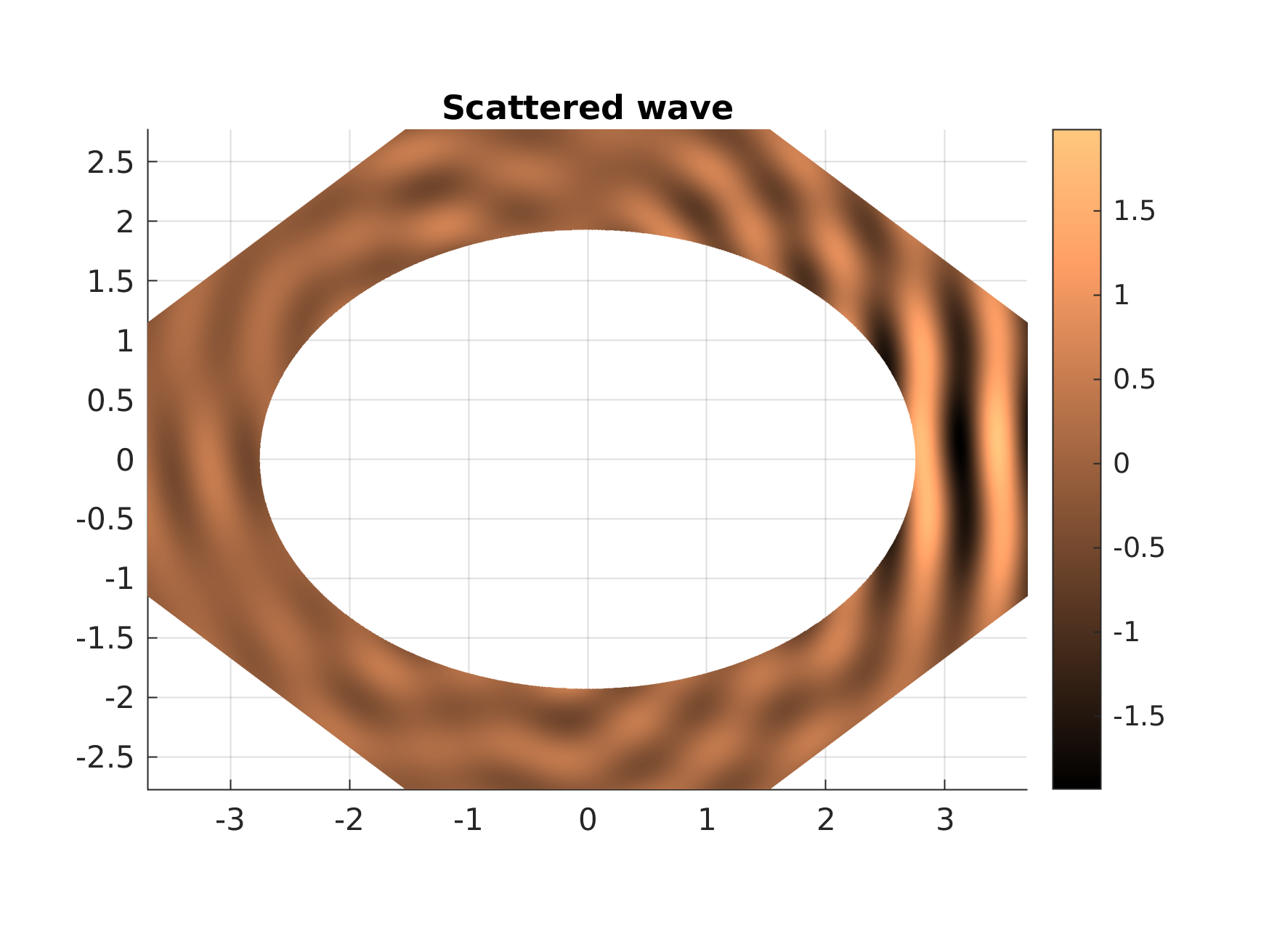}
  \]
\caption{\label{fig:janus-fields}  {\em Experiment \#2 ($k = 20$). Computed total wave (left) and scattered wave (right) using $\mathbb{P}_3$-finite elements with $698, 880$  elements  and  $1,398,881$ nodes, and using a  spectral BEM with $N=80$.}}
  \[
  \includegraphics[width=0.66\textwidth]{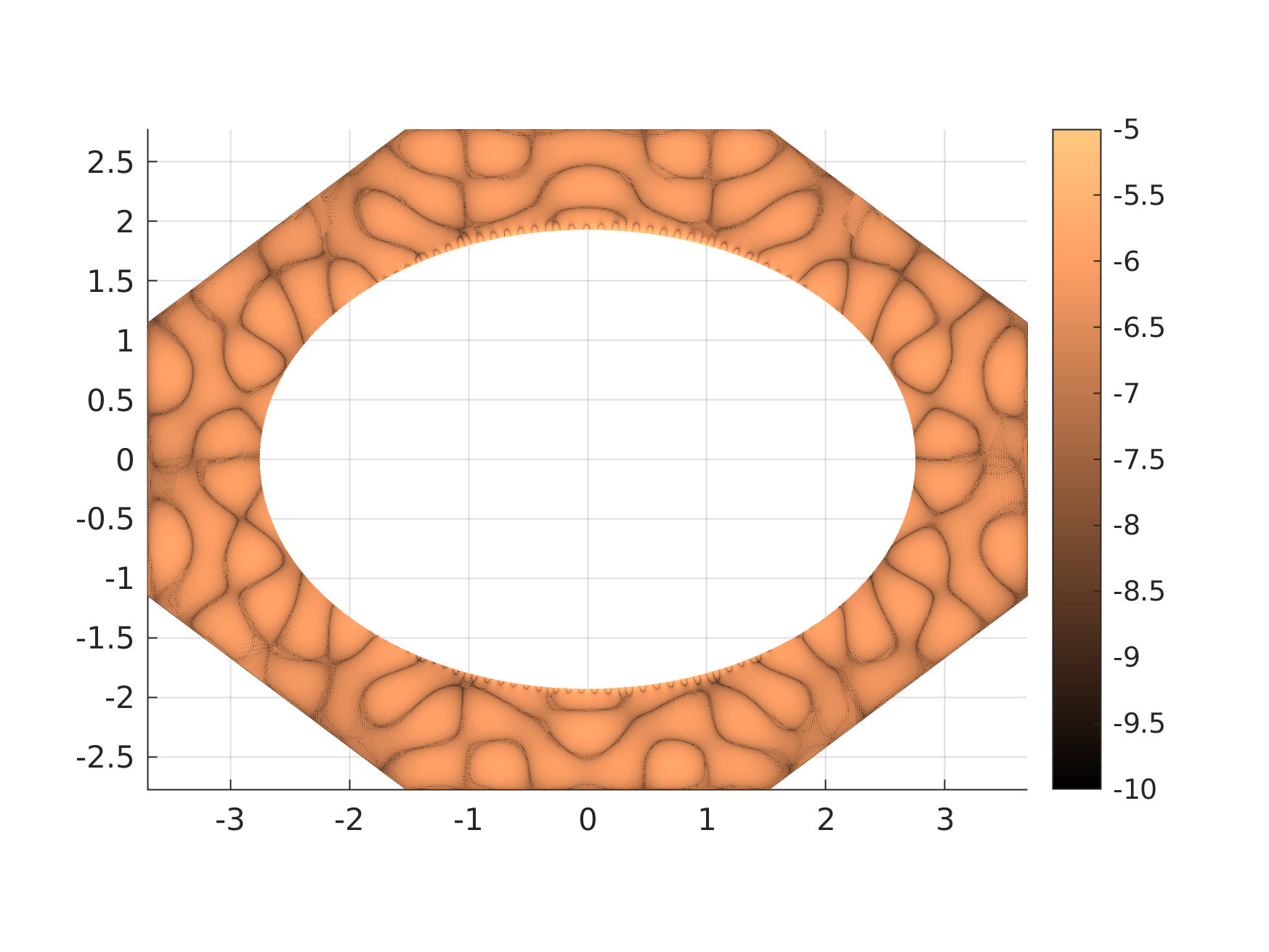}
  \]
\caption{\label{fig:janus-fields-error} {\em Experiment \#2 (with parameter values as in Figure~\ref{fig:janus-fields}. Absolute difference in log-scale between BEM-computed and FEM-computed approximation solutions in the overlapped region $(\mathbb{R}^2 \setminus \overline{\Omega}_1)\cap \Omega_2$.}}
 \end{figure}

\begin{table}
    \begin{center} \tt  \setlength{\tabcolsep}{4pt} \small 
 \begin{tabular}{r|ccccc}
   
    \changeSecondReferee{\diagbox[width=\dimexpr \textwidth/16\relax, height=.8cm]{$N$}{$L$}}                
          &   {5,531}    & {21,981 } &{87,641}  & {350,001} & 1,398,881\\ 
 \hline     
   010    &   5.15e+00   &  4.91e+00 & 4.89e+00 &  4.89e+00 & 4.89e+00\\ 
   020    &   4.37e-01   &  2.77e-02 & 3.14e-03 &  1.65e-03 & 1.33e-03\\ 
   040    &   4.12e-01   &  2.53e-02 & 1.04e-03 &  2.93e-04 & 3.34e-05 \\
  080    &   4.08e-01   &  2.60e-02 & 1.58e-03 &  1.26e-04 & 2.76e-05 \\
   160    &   4.04e-01   &  2.58e-02 & 1.67e-03 &  1.39e-04 & 2.83e-05 \\
\hline
   \end{tabular}
 
 \ \\[2ex]
                    
    \begin{tabular}{r|ccccc}
   $N$/${L}$                
          &   {12,391}  & {49,351}  &{196,981} & {787,081} & {3,146,641}\\ 
 \hline  
  010    &   4.89e+00  &  4.89e+00 & 4.89e+00 &  4.89e+00 & 4.89e+00\\ 
   020    &   5.01e-03  &  1.17e-03 & 1.31e-03 &  1.36e-03 & 1.36e-03\\ 
   040    &   1.03e-02  &  4.89e-04 & 2.15e-05 &  3.87e-07 & 7.79e-08 \\
   080    &   1.23e-02  &  4.47e-04 & 3.18e-05 &  7.21e-07 & 8.25e-08 \\
   160    &   1.30e-02  &  3.45e-04 & 3.88e-05 &  1.07e-06 & 4.94e-08 \\
   \end{tabular}
   \end{center}
\caption{\label{tab:DSCS} {\em Experiment \#2 estimated DSCS  uniform norm errors, for $k = 5$,  with  $\mathbb{P}_2$ (top) 
and $\mathbb{P}_3$ (bottom) elements and $\phi =0$ (i.e., with $\bm{d}=(1,0)$ in the incident plane wave). }}
   \end{table}
 
 \begin{figure}[h]
\[
 \includegraphics[width=0.68\textwidth]{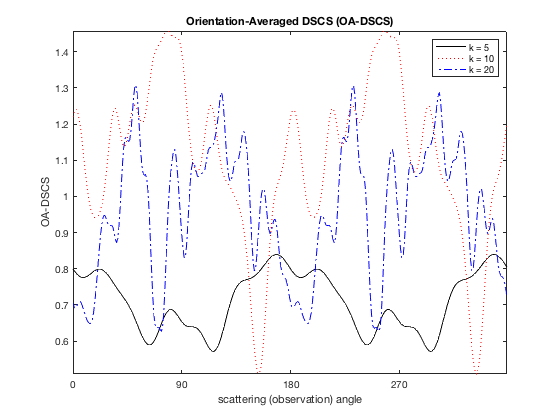} 
 \]
\caption{\label{fig:YAF}Orientation-Averaged DSCS for Experiment \#2}
\end{figure}

\clearpage
 \subsection{Experiment \#3 (A Baby-Yoda shaped  heterogeneous region)}

In this set of experiments,   we demonstrate flexibility of our  FEM-BEM algorithm with complex structured heterogeneous regions. To this end, we consider a  Baby-Yoda like 
domain, depicted in Figure~\ref{fig:babyYoda} on which the refraction index function is defined. The function is taken to be  piecewise-constant at distinct parts of
the domain with values indicated in Figure~\ref{fig:babyYoda}, leading to the  total field solution $u$ with limited regularity.   In Figure~\ref{fig:babyYoda} we also 
show the curve $\Gamma$ taken  for our BEM computations, which is similar to the smooth curve in Experiment \#1 with some rescaling;  and  the choice of $\Sigma$ is such that  the  FEM computational domain  $\Omega_2$  is a rectangle. Similar to the first two sets of experiments, we simulate the model with wavenumbers $k = 5, 10, 20$, and for the configuration in Figure~\ref{fig:babyYoda} 
these \changeV{values}  respectively correspond to $9.2, 18.4,  36.8$ interior wavelengths region $\Omega_2$. 
 
 \begin{figure}[h]
\[
 \includegraphics[width=0.68\textwidth]{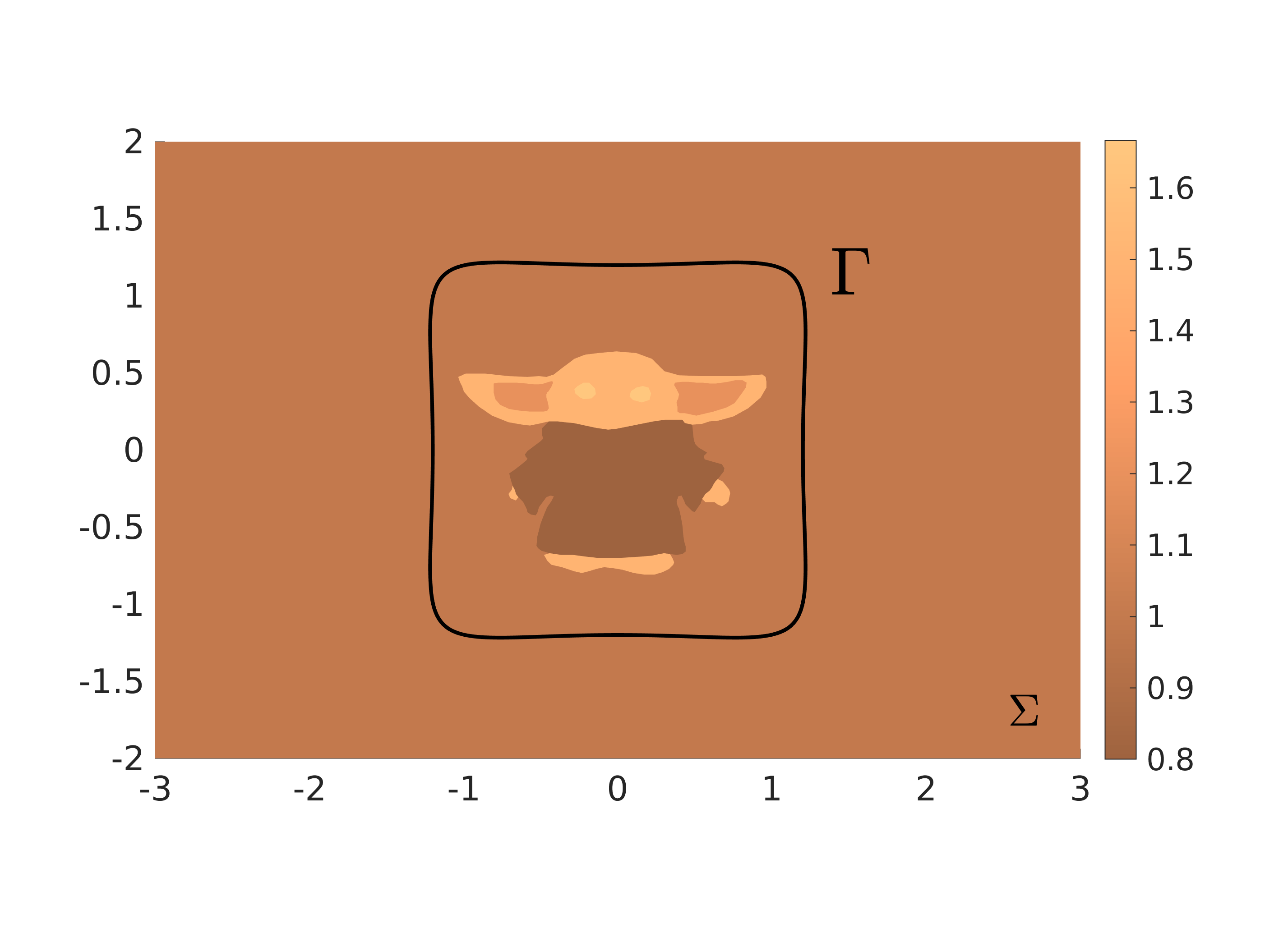} 
 \]
 
 \vspace{-0.5in}
\caption{\label{fig:babyYoda} {\em Experiment \#3 setup with Baby-Yoda heterogeneous region.}}
\end{figure}

Similar to the first two sets of experiments, sample results in Table~\ref{table:exp03:P3b} demonstrate the power  of our overlapped FEM-BEM algorithm even for the 
case of simulation of non-smooth wave fields induced by only piecewise-continuous refractive index in complicated heterogeneous regions. 
Another marked advantage  of our FEM-BEM algorithm  is the low number of iterations required for  convergence of the GMRES to iteratively solve  the resulting 
interface algebraic linear systems~\eqref{eq:NhBEMFEM:0}. In particular, as can be observed in Table~\ref{tab:GMRES-yoda}, for each fixed wavenumber $k$ the number
of required GMRES iterations is independent of the numbers of FEM-BEM DoFs (level of discretizations). In addition,  even as the frequency is doubled the required
iterations grow only mildly (to achieve a fixed error tolerance), and because of the well-conditioning the interface system  ~\eqref{eq:NhBEMFEM:0}, even for
the high-frequency case, less than $100$ GMRES iterations are required without using any preconditioner. We conclude the numerical experiments section with
visualizations in   Figure~\ref{fig:yoda-fields} for (i) the total field
in $\Omega_2$; (ii) the scattered field in the overlapped region $\Omega_{12}=(\mathbb{R}^2 \setminus \overline{\Omega}_1)\cap \Omega_2$;  and (iii) accurate
matching of the FEM and BEM solutions in the overlapped region in Figure~\ref{fig:yoda-fields-error}.

   \begin{table}

 \begin{center} \tt  \setlength{\tabcolsep}{4pt}\small 
  \begin{tabular}{r|c|c|c|c|c|c}
    \changeSecondReferee{\diagbox[width=\dimexpr \textwidth/16\relax, height=.8cm]{$N$}{$L$}} & \multicolumn{2}{c|}{114,094 } &  \multicolumn{2}{c|}{    455,785 } &  \multicolumn{2}{c}{   1,821,961} \\ 
\hline &  \multicolumn{2}{c|}{ } &  \multicolumn{2}{c|}{ }  &  \multicolumn{2}{c}{  }  \\         
              &       {\rm FE}   &  {\rm BEM}  &  {\rm FE}  &  {\rm BEM}  &   {\rm FE}  &  {\rm BEM}  \\
   020        &      3.1e-02     & 3.0e-01   & 2.0e-02    &  3.0e-01    &   1.6e-02   & 3.0e-01  \\  
   040        &      2.7e-02     & 2.2e-03   & 1.4e-02    &  2.0e-03    &   6.2e-03   & 2.0e-03  \\  
   080        &      2.7e-02     & 9.6e-04   & 1.4e-02    &  2.4e-04    &   6.2e-03   & 5.2e-05  \\  
   160        &      2.7e-02     & 9.5e-04   & 1.4e-02    &  2.4e-04    &   6.2e-03   & 5.2e-05       
  \end{tabular}
\end{center}

 \begin{center} \tt  \setlength{\tabcolsep}{4pt}\small 
  \begin{tabular}{r|c|c|c|c|c|c}
    \changeSecondReferee{\diagbox[width=\dimexpr \textwidth/16\relax, height=.8cm]{$N$}{$L$}} & \multicolumn{2}{c|}{114,094 } &  \multicolumn{2}{c|}{    455,785 } &  \multicolumn{2}{c}{   1,821,961} \\ 
\hline &  \multicolumn{2}{c|}{ } &  \multicolumn{2}{c|}{ }  &  \multicolumn{2}{c}{  }  \\         
              &       {\rm FE}   &  {\rm BEM}  &  {\rm FE}  &  {\rm BEM}  &   {\rm FE}  &  {\rm BEM}  \\
   020        &      2.1e+01     & 1.9e+01   & 2.1e+01    &  1.9e+01    &   2.1e+01   & 1.9e+01  \\  
   040        &      8.5e-02     & 2.8e-02   & 2.9e-02    &  5.7e-02    &   1.3e-02   & 5.1e-03  \\  
   080        &      8.5e-02     & 2.7e-02   & 2.9e-02    &  2.1e-03    &   1.3e-02   & 2.1e-04  \\  
   160        &      8.5e-02     & 2.6e-02   & 2.9e-02    &  2.0e-03    &   1.3e-02   & 2.1e-04       
  \end{tabular}
\end{center}
 
 \begin{center} \tt  \setlength{\tabcolsep}{4pt}\small 
  \begin{tabular}{r|c|c|c|c|c|c}
    \changeSecondReferee{\diagbox[width=\dimexpr \textwidth/16\relax, height=.8cm]{$N$}{$L$}} & \multicolumn{2}{c|}{114,094 } &  \multicolumn{2}{c|}{    455,785 } &  \multicolumn{2}{c}{   1,821,961} \\ 
\hline &  \multicolumn{2}{c|}{ } &  \multicolumn{2}{c|}{ }  &  \multicolumn{2}{c}{  }  \\                
              &       {\rm FE}   &  {\rm BEM}  &  {\rm FE}  &  {\rm BEM}  &   {\rm FE}  &  {\rm BEM}  \\
   020        &      1.7e+02     & 1.1e+02   & 1.7e+02    &  1.1e+02    &   1.7e+02   & 1.1e+02  \\  
   040        &      5.1e+00     & 4.3e+00   & 5.0e+01    &  4.4e+00    &   5.0e+01   & 4.4e+00  \\  
   080        &      1.0e+00     & 2.7e-01   & 1.3e-01    &  6.5e-03    &   3.0e-02   & 8.8e-04  \\  
   160        &      1.0e+00     & 2.7e-01   & 1.3e-01    &  6.2e-03    &   3.0e-02   & 8.6e-04       
  \end{tabular}
\end{center}
 \caption{\label{table:exp03:P3b} {\em Experiment \#3 results using $\mathbb{P}_3$   elements with $k =5$ (top), $k=10$ (mid), and $k= 20$ \changeSecondReferee{(bottom)}. Estimated $H^1-$ error for the total wave in $\Omega_2$  (FEM-part) and for the scattered wave away from $\Gamma$ (BEM-part).}} 
 \end{table}

 \begin{table}

   \begin{center} \tt  \setlength{\tabcolsep}{4pt}\footnotesize
 \begin{tabular}{r||c|c|c||c|c|c||c|c|c||c|c|c}
     \changeSecondReferee{\diagbox[width=\dimexpr \textwidth/16\relax, height=.8cm]{$N$}{$L$}} & \multicolumn{3}{c||}{48,137} &  \multicolumn{3}{c||}{    192,153  } &  \multicolumn{3}{c||}{767,825  } 
              &  \multicolumn{3}{c}{3,069,729}\\ 
\hline 
              &   k=5   &  k=10  &  k=20  & k=5 &  k=10 & k=20  &   k=5  &  k=10  &  k=20 &   k=5  &  k=10   &  k=20      \\ 
   020        &   22    &   38   &   40     & 22  &  38   &   40   &  22  &   38  &   040  &  22   &   38    &   40        \\
   040        &   22    &   38   &   67     & 22  &  38   &   67   &  22  &   38  &   067  &  22   &   38    &   67        \\
   080        &   22    &   38   &   67     & 22  &  38   &   67   &  22  &   38  &   067  &  22   &   38    &   67        \\
   160        &   22    &   38   &   67     & 22  &  38   &   67   &  22  &   38  &   067  &  22   &   38    &   67    
  \end{tabular}  
\end{center} 

   \begin{center} \tt  \setlength{\tabcolsep}{4pt}\small
 \begin{tabular}{r||c|c|c||c|c|c||c|c|c}
     \changeSecondReferee{\diagbox[width=\dimexpr \textwidth/16\relax, height=.8cm]{$N$}{$L$}} & \multicolumn{3}{c||}{114,094 } &  \multicolumn{3}{c||}{    455,785 } &  \multicolumn{3}{c}{1,821,961 } \\
\hline 
              &   k=5   &  k=10  &  k=20  & k=5 &  k=10 & k=20  &   k=5  &  k=10  &  k=20        \\ 
   020        &   22    &   38   &   40     & 22  &  38   &   40   &  22  &   38  &   40        \\
   040        &   22    &   38   &   67     & 22  &  38   &   67   &  22  &   38  &   67       \\
   080        &   22    &   38   &   67     & 22  &  38   &   67   &  22  &   38  &   67       \\
   160        &   22    &   38   &   67     & 22  &  38   &   67   &  22  &   38  &   67   
  \end{tabular}  
\end{center}

  \caption{\label{tab:GMRES-yoda} 
\em Experiment \#3. Total number of GMRES iterations for convergence (with tolerance  $10^{-8}$) using  $\mathbb{P}_2$ (top) and $\mathbb{P}_3$ (bottom) elements. For each fixed $k$,   the total number of required GMRES iterations  is independent of $N$ and $L$  and grows mildly with respect to increase in the wavenumber $k$.}

 \end{table}                                                                                                          
                                                            
  \begin{figure}
    \[
  \includegraphics[width=0.52\textwidth]{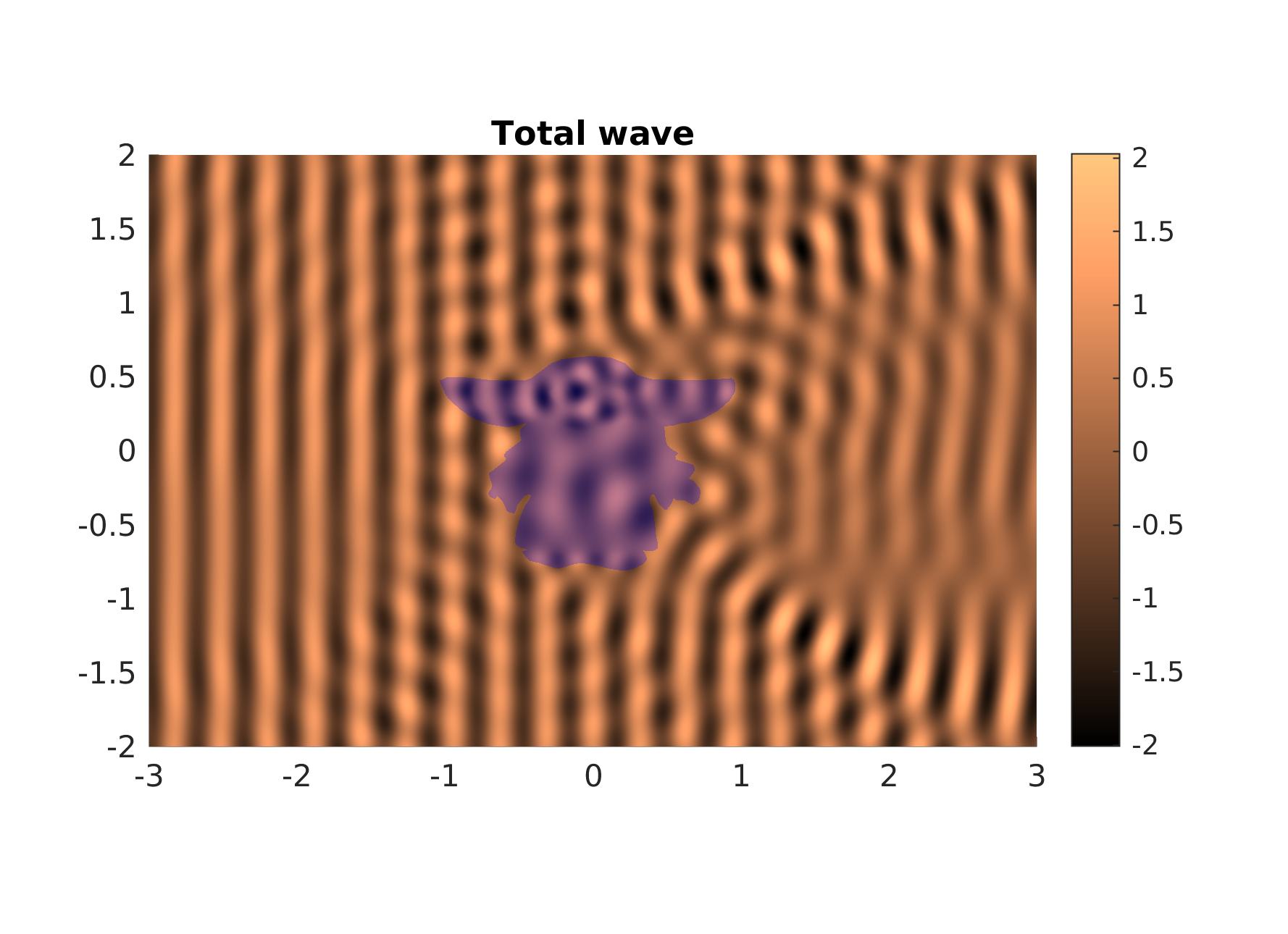}
  \includegraphics[width=0.52\textwidth]{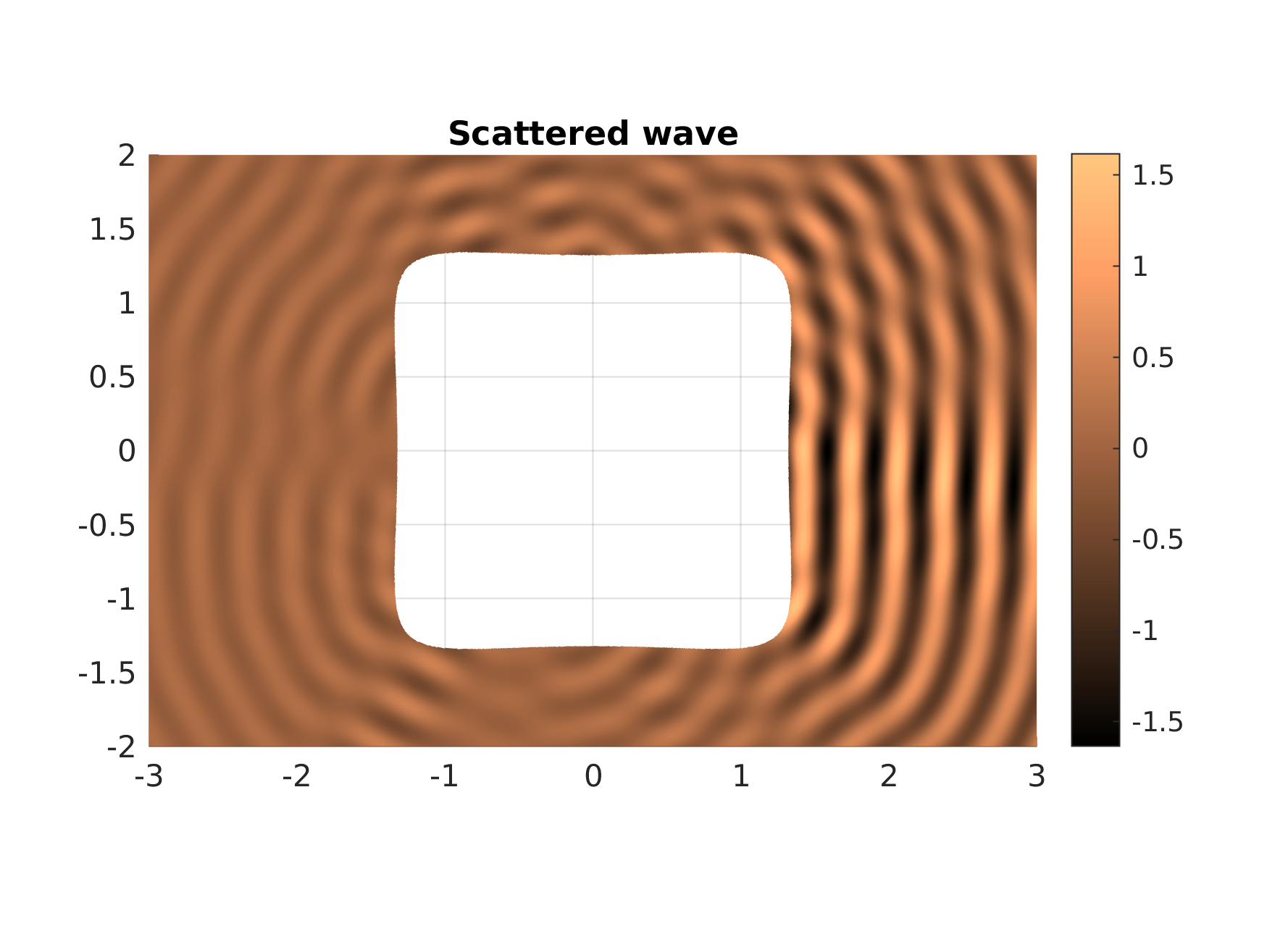}
  \]
\caption{\label{fig:yoda-fields}{\em Experiment \#3 ($k = 20$). Computed total wave (left) and scattered wave (right) using $\mathbb{P}_3$-finite elements with 
$478,464$   elements  and  $1,821,961$ nodes, and using a  spectral BEM with   $N=80$.}}
  \[
  \includegraphics[width=0.66\textwidth]{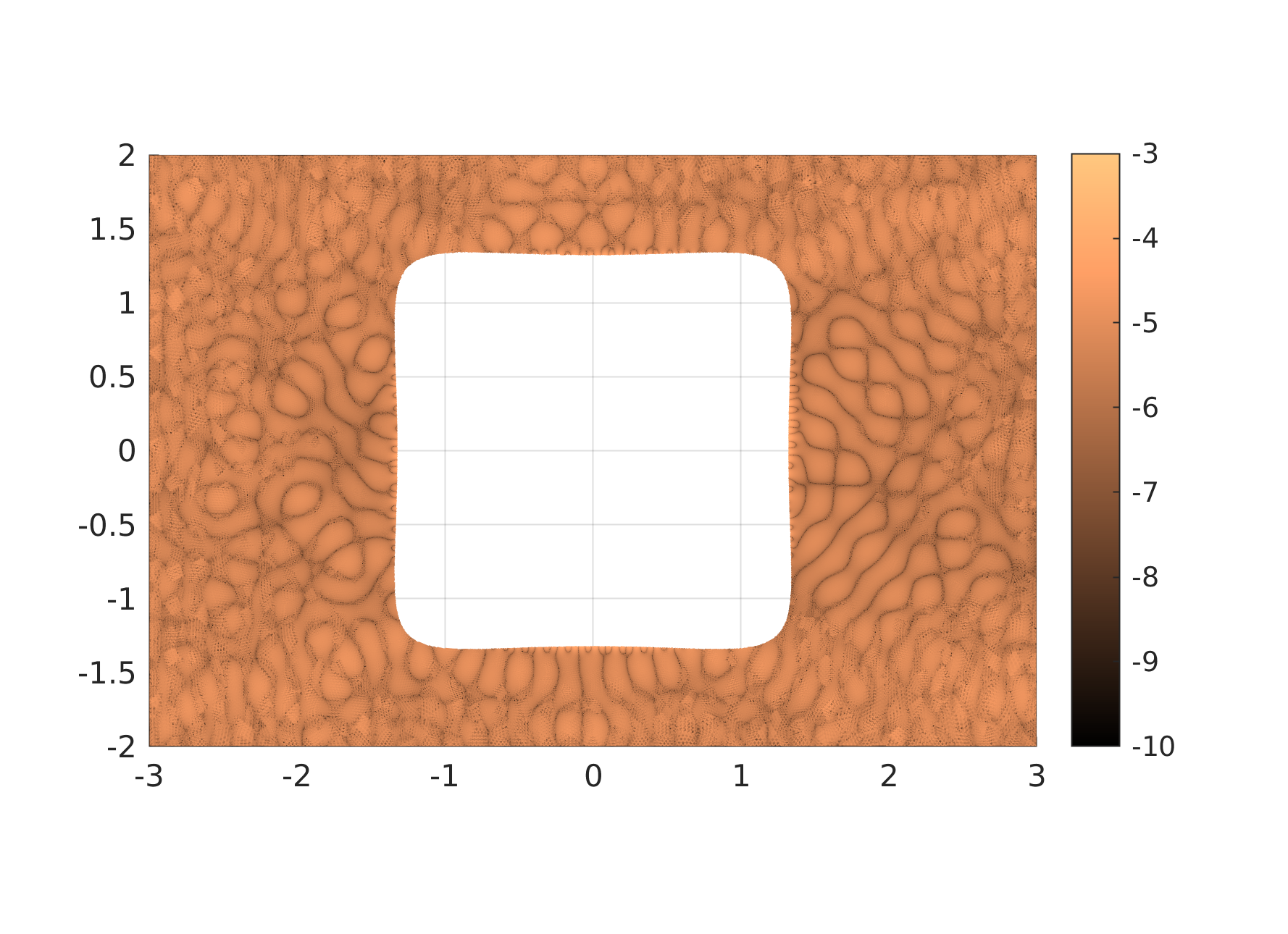}
  \]
\caption{\label{fig:yoda-fields-error} {\em Experiment \#3 (with parameter values as in Figure~\ref{fig:yoda-fields}. Absolute difference in log-scale between BEM-computed and FEM-computed approximation solutions in the overlapped region $ \Omega_{12}= (\mathbb{R}^2 \setminus \overline{\Omega}_1)\cap \Omega_2$.}}
  \end{figure}

\changeV{
\subsection*{Acknowledgement}
We sincerely thank the anonymous referees for  suggestions which helped to improve the paper. The first author (Dom\'{\i}nguez) is supported by the project  MTM2017-83490-P. 
The second author (Ganesh) gratefully acknowledges the support of the Simons Foundation.}


\appendix

\section{Sobolev convergence estimates for some projections}

 In this section we collect some {useful} results for projections on finite element spaces. 
 The first set of results is concerned with finite element spaces 
 on polygonal compact closed boundaries in $\mathbb{R}^2$.
There finite element spaces are inherited by taking  Dirichlet  trace of finite element spaces on triangular meshes. We finish this section proving a commutation property for Scott-Zhang type projections, which is required in Appendix B for deriving superconvergence results of the finite element solution in stronger norms
that are also valid in three dimensions. 
We think that most of the described results here belong to the  folklore in finite element analysis. We  include results below for the sake of completeness.

\subsection{ Projections on polygonal boundary finite element spaces}
Let $\Sigma$ be a polygonal simply connected closed curve with interior $\Omega\subset\mathbb{R}^2$. We consider $\{{\cal T}_h\}_{h\ge 0}$ a sequence of regular triangular meshes of $\Omega$ and denote by $\{\tau_h\}_{h\ge 0}$ that inherited on $\Sigma$. Without loss of generality we can assume that $h_\Sigma\approx h$, where $h_\Sigma$ and $h$ are the maximum, respectively, of the diameters of the elements of the grids ${\cal T}_h$ and ${\tau}_h$.  In addition, let 
\[
  \mathbb{P}_{h,d}:=\{{u}_h\in{\cal C}^0(\Omega) :\ u_h|_{T_h}\in\mathbb{P}_d\},\qquad \gamma_\Sigma \mathbb{P}_{h,d}
 \]
denote the space of continuous finite elements space in $\Omega$ and its boundary $\Sigma$. We denote by 
\[
\mathrm{Q}_{h}:{\cal C}(\Omega)\to   \mathbb{P}_{h,d},\qquad {\rm Q}^h_{\Sigma} : {\cal C}(\Sigma)\to   \gamma_\Sigma \mathbb{P}_{h,d}
\]
 the corresponding nodal (Lagrange) interpolation operators. Note that
 \[
\gamma_\Sigma   \mathrm{Q}_{h}u_h= {\rm Q}^h_{\Sigma} \gamma_\Sigma u_h,\quad \forall u_h\in\mathbb{P}_{h,d}. 
 \]
Our objectives in this section are twofold: (a) derive convergence estimates in the Sobolev norms $\|\cdot\|_{H^s(\Sigma)}$ for ${\rm Q}^h_{\Sigma}$ and  $s\ge 0$ (see \eqref{eq:3.2} for the definition we have taken for these spaces for $s> 1$);  (b) define an alternative   stable and convergent projection in weaker norms. In particular, we are interested in working with  functions $H^{1/2}(\Sigma)$, the trace space, for which the interpolant cannot be defined  since the space contains discontinuous functions.   Next, we  start with the first objective:

\begin{proposition}\label{prop:A1}
 For any $s\in [0,1]$ and $s\le t<d+1$ with $t> 1/2$ there exists $C>0$ such that
 \begin{equation}\label{eq:A:01}
  \|{\rm Q}^h_{\Sigma} f_\Sigma-f_\Sigma\|_{ H^s(\Sigma)}\le Ch_\Sigma^{t-s}\|f\|_{H^t(\Sigma)}.
 \end{equation}
 Furthermore, for any $t>d+1$ 
 \begin{equation}\label{eq:A:02}
  \|{\rm Q}^h_{\Sigma} f_\Sigma-f_\Sigma\|_{ H^s(\Sigma)}\le C h_\Sigma^{d+1-s}\|f\|_{H^t(\Sigma)}
 \end{equation}
 with $C>0$ depending only on $s\in[0,1]$ and $t$.
\end{proposition}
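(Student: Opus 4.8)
The plan is to reduce everything to one-dimensional interpolation estimates on the individual edges of $\Sigma$ and then to reassemble the global bound by an operator-interpolation argument that bypasses the corner difficulties. Write $\Sigma=\bigcup_j\overline{E}_j$ as the union of its closed edges and recall that the nodal interpolant acts edgewise: on each open edge $E_j$ the restriction $(\mathrm{Q}^h_\Sigma f_\Sigma)|_{E_j}$ is the one-dimensional Lagrange interpolant of $f_\Sigma|_{E_j}$ in the space of continuous piecewise polynomials of degree $d$ on the mesh $\tau_h\cap E_j$. First I would establish the purely one-dimensional estimate
\[
\|(\mathrm{Q}^h_\Sigma-\mathrm{I})f_\Sigma\|_{H^\sigma(E_j)}\le C\,h_\Sigma^{\,r-\sigma}\,\|f_\Sigma\|_{H^r(E_j)},\qquad \sigma\in\{0,1\},
\]
with $r=t$ when $\sigma\le t\le d+1$ and $r=d+1$ when $t>d+1$. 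For integer regularity this is the textbook Bramble--Hilbert-plus-scaling argument applied element by element and summed, the sum of local $H^\sigma$-seminorms being controlled by the appropriate power of $h_\Sigma$ times the local $H^m$-seminorms; the restriction $t>1/2$ is exactly what guarantees $H^t(E_j)\hookrightarrow C(\overline{E}_j)$, so that point values, and hence $\mathrm{Q}^h_\Sigma$, are meaningful. Fractional values of $t$ are filled in by interpolating the operator $\mathrm{Q}^h_\Sigma-\mathrm{I}$ in its \emph{source} space via $[H^{t_0}(E_j),H^{t_1}(E_j)]_\theta=H^t(E_j)$, the limiting rates interpolating to $h_\Sigma^{\,t-\sigma}$. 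The saturation at $t>d+1$ reflects that only degree-$d$ polynomials are available locally.

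Second, I would transfer the edgewise data norms to the global one by showing $\sum_j\|f_\Sigma\|_{H^t(E_j)}^2\le C\|f_\Sigma\|_{H^t(\Sigma)}^2$. For $t\in(1/2,1]$ this is just boundedness of the restriction of $H^t(\Sigma)$ to each flat piece $E_j$. For $t>1$ I would invoke the definition \eqref{eq:3.2}: choose an extension $u\in H^{t+1/2}(D)$ with $\gamma_\Sigma u=f_\Sigma$ and $\|u\|_{H^{t+1/2}(D)}\le 2\|f_\Sigma\|_{H^t(\Sigma)}$, and apply the trace theorem onto each individual edge (tracing onto one flat piece at a time causes no corner trouble) to get $\|f_\Sigma|_{E_j}\|_{H^t(E_j)}\le C\|u\|_{H^{t+1/2}(D)}$; summing over $j$ and taking the infimum over admissible extensions yields the claim.

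The reassembly is the delicate point, so I would treat the endpoints $\sigma=0,1$ separately and only then interpolate globally. At $\sigma=0$ the bound is immediate, since $\|g\|_{L^2(\Sigma)}^2=\sum_j\|g\|_{L^2(E_j)}^2$ exactly, with $g:=(\mathrm{Q}^h_\Sigma-\mathrm{I})f_\Sigma$. At $\sigma=1$ the key observation is that $g$ is globally continuous on $\Sigma$ and vanishes at every vertex: $f_\Sigma$ is continuous because $t>1/2$, while $\mathrm{Q}^h_\Sigma f_\Sigma$ is the trace of a continuous finite element function interpolating $f_\Sigma$ at all nodes, the vertices among them, so the two agree at corners. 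Hence $g\in H^1(\Sigma)$ with $\|g\|_{H^1(\Sigma)}^2\approx\sum_j\|g\|_{H^1(E_j)}^2$ and no jump contributions. Combining the two endpoints with the previous two steps gives
\[
\|(\mathrm{Q}^h_\Sigma-\mathrm{I})f_\Sigma\|_{L^2(\Sigma)}\le C h_\Sigma^{\,r}\|f_\Sigma\|_{H^t(\Sigma)},\qquad \|(\mathrm{Q}^h_\Sigma-\mathrm{I})f_\Sigma\|_{H^1(\Sigma)}\le C h_\Sigma^{\,r-1}\|f_\Sigma\|_{H^t(\Sigma)}.
\]
Viewing $\mathrm{Q}^h_\Sigma-\mathrm{I}$ as a fixed operator out of $H^t(\Sigma)$ and interpolating the \emph{target} through the standard identity $[L^2(\Sigma),H^1(\Sigma)]_{s}=H^s(\Sigma)$, valid on a Lipschitz curve for $s\in[0,1]$, the operator norm obeys $C(h_\Sigma^{\,r})^{1-s}(h_\Sigma^{\,r-1})^{s}=C h_\Sigma^{\,r-s}$, which is precisely \eqref{eq:A:01}--\eqref{eq:A:02}.

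The main obstacle I anticipate is exactly the passage between the global, non-local norm $\|\cdot\|_{H^s(\Sigma)}$ on the non-smooth curve — defined through the image convention \eqref{eq:3.2} once $s>1$ — and the broken edgewise norms in which the local approximation theory naturally lives. The corner behaviour is what forces the endpoint-plus-interpolation strategy: gluing directly at $s=1/2$ is problematic, whereas gluing only at $s=0$ (trivial) and $s=1$ (where continuity of $g$ removes all corner and jump terms) and then interpolating the already-global operator sidesteps the difficulty cleanly.
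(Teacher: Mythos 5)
Your proposal is correct and follows essentially the same route as the paper's proof: edgewise (broken-norm) interpolation estimates at the integer endpoints $s\in\{0,1\}$, control of the broken data norm $\sum_j\|f_\Sigma\|^2_{H^t(E_j)}$ by $\|f_\Sigma\|_{H^t(\Sigma)}$ via the trace theorem and the convention \eqref{eq:3.2}, and Sobolev-space interpolation to fill in fractional $s$ and $t$. The only difference is presentational: you make explicit the continuity of the error across vertices (which justifies assembling the global $H^1(\Sigma)$ norm from the edgewise ones) and you trace onto one edge at a time, whereas the paper cites the classical estimate in the broken norm ${\cal H}^t(\Sigma)$ directly and invokes Grisvard's trace theorem, dodging its failure at integer $t$ by interpolating.
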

\begin{proof}
 
We start with a classical result: for  $s\in\{0,1\}$ and $t>\max\{1/2,s\}$~\cite[Th. 3.1.6]{Ci:2002}
\[
\|{\rm Q}^h_{\Sigma} f_\Sigma-f_\Sigma\|_{ H^s(\Sigma)} \le 
C'  h_\Sigma^{t-s}
\| f_\Sigma  \|_{{\cal H}^t(\Sigma)} 
\]
where $\{\Sigma_\ell\}_\ell$ are the edges of $\Sigma$ and $H^t(\Sigma_\ell)$ the corresponding classical Sobolev space, and 
\[
  \| f_\Sigma  \|_{{\cal H}^t(\Sigma)}^2:=\sum_{\ell } 
\|f_\Sigma\|^2_{ H^t(\Sigma_\ell)}.
\]
For $t>0$, with $t\ne 1,2,\ldots, $ we then use that the trace operator is also continuous in this norm
\begin{equation}\label{eq:A:03}
\| \gamma_\Sigma u  \|_{{\cal H}^t(\Sigma)} \le C\|u\|_{H^{t+1/2}(\Omega)},\quad \forall u\in H^{t+1/2}(\Omega)
\end{equation}
cf. \cite[Th. 1.5.2.8]{Gr:2011} (the bound in \eqref{eq:A:03} breaks precisely for $t=0,1,2\ldots$; see also \cite[Th. 4.2.7]{HsiWen:2008}) which proves \eqref{eq:A:01} for the considered non-integer values of $t$. By interpolation of Sobolev spaces, we can extend the result for any $t<d+1$ and $s\in[0,1]$, and hence \eqref{eq:A:01} holds. 

The bound \eqref{eq:A:02} can be proven by starting from
\[
 \|{\rm Q}^h_{\Sigma} f_\Sigma-f_\Sigma\|_{ H^s(\Sigma)} \le C h_\Sigma^{d+1-s}   \| f_\Sigma  \|_{{\cal H}^{d+1+\varepsilon}(\Sigma)},
\]
with $\varepsilon\in(0,1)$ 
and again using \eqref{eq:A:03}. 
\end{proof}

We will focus next on constructing a projection ${\rm P}_\Sigma^h:H^{1/2}(\Sigma)\to  \gamma_\Sigma \mathbb{P}_{h,d}$ with same convergence rates in an extended Sobolev scale. A key fact in this construction is the existence of a Scott-Zhang-type projection cf. \cite{ScZh:1990} (see also \cite{MR2033124}). Specifically, there exists a continuous linear mapping   $\Pi_{h,d}: H^s(\Omega)\to \mathbb{P}_{h,d}$, with $s>{1/2}$, satisfying
\begin{subequations} \label{eq:Ph}
\begin{enumerate}
  
 \item $\Pi_h$ is a projection:
 \begin{equation}\label{eq:Ph:01}
   \Pi_h v_h=v_h,\quad \forall v_h\in \mathbb{P}_{h,d}.
   \end{equation} 
   \item The image of any element with null trace has null trace as well: 
 \begin{equation}\label{eq:Ph:02}
\gamma_\Sigma \Pi_h v=0,\quad \text{if}\quad \gamma_\Sigma v=0.
   \end{equation}
   
\item It is quasi-local: for any triangle $K\in{\cal T}_h$, 
\begin{equation}\label{eq:Ph:04c}
  \Pi_h  v|_T = 0, \quad  \text{if}\quad v|_{S_T}=0,  \text{ with } S_T:=\bigcup_{\stackrel{T'\in{\cal T}_h}{\overline{T}'\cap \overline{T}\ne \emptyset} } \overline{T}'.
\end{equation}
Furthermore, for any $0\le  s\le t $ with $s\in[0,3/2)$ and  $1/2<t\le d+1$ there exists $C=C(s,t)$ so that
  \begin{equation}\label{eq:Ph:04}
   \|\Pi_h v- v\|_{H^s(T)} \le C h_T^{t-s}\|v\|_{H^t(S_T)}.
  \end{equation}
The constant $C(s,t)$ depends only on the chunkiness parameter of the grid. As consequence of \eqref{eq:Ph:04} we have
 \begin{equation}\label{eq:Ph:03}
 \left[\sum_{T\in{\cal T}_h} h_T^{2(s-t)} \|\Pi_h v- v\|^2_{H^s(T)}\right]^{1/2} \le  
C
\|v\|_{H^t(\Omega)}.
 \end{equation}
\end{enumerate}
\end{subequations}
Since for $s\in[0,1/2)\cup [1,3/2)$, we have 
  \begin{equation}\label{eq:splitSobolevNorm}
  \|v\|_{H^s(\Omega)}\le C \left[\sum_{T\in{\cal T}_h}  \|v\|^2_{H^s(T)}\right]^{1/2}, 
  \end{equation}
the simpler estimate 
  \begin{equation}\label{eq:Ph:05}
    \|\Pi_h v- v\|_{H^s(\Omega)} \le C h^{t-s}\|v\|_{H^t(\Omega)}.
  \end{equation}
 can be  derived from \eqref{eq:Ph:04}, first for $s\in[0,1/2)\cup [1,3/2)$, and then can be extended for  $s\in[1/2,1)$ by interpolation of Sobolev spaces.  
 
 {
 \begin{remark} The proof of \eqref{eq:splitSobolevNorm}  is based on working  with the Slobodeckij form of the Sobolev norm: for non-integer $s> 0$ and for any Lipschitz domain $D\subset\mathbb{R}^m$
 \[
 \|u\|_{H^s(D)}^2 = 
 \|u\|_{H^{\underline{s}}(D)}^2 + \sum_{|\bm\alpha|=\underline{s}} \int_D\int_D 
 \frac{|\partial_{\bm\alpha}u(\bm{x})-\partial_{\bm\alpha}u(\bm{y})|}{|{\bm x}-{\bm y}|^{m+2({s}-\underline{s})}}{\rm d}\bm {x}\,
 {\rm d}{\bm y},
 \]
 where $\underline{s}$ is the largest integer less than $s$.  Then, for $t\in (0,1/2)$, it is easy to derive the estimate
 \[
 \|v\|^2_{H^t(\Omega)}\le C_{t,\Omega} \left[\sum_{T\in{\cal T}_h}  \|v\|^2_{H^t(T)} +\int_{T} \frac{|v(\bm{x})|^2}{\rho(\bm{x},\partial T) ^{2t}}\,{\rm d}\bm{x}\right] 
 \]
 where
 \[
 \rho({\bm x},\partial T)=\inf_{\bm{y}\in\partial T}|{\bm x}-{\bm y}|. 
 \]
 A Hardy-type inequality (see \cite[Lemma 3.32]{McLean:2000}) allows to bound  the above integral  term by 
\[
\int_{T} \frac{|v(\bm{x})|^2}{\rho(\bm{x},\partial T) ^{2t}}\,{\rm d}\bm{x}\le C \|v\|^2_{H^t(T)}.
\]
The constant $C$ appearing above depends on $t$ and on the chunkiness parameter of $T$. This inequality does not hold for $t\in[1/2,1)$ since the last integral in the left hand side is expected to be non-convergent. The result for $t>1$ is a simple extension of this argument.

 \end{remark}
 }
The last ingredient is a right inverse of the trace operator ${\rm R}_{\Sigma \Omega }:H^s(\Sigma)\to  H^{s+1/2}(\Omega)$  for $s\in(0,1]$ . For instance, one can take
\[
 {\rm R}_{\Sigma \Omega}  g_\Sigma := u,\quad \text{with $u$ satisfying } \Delta u =0,\quad\text{$\gamma_\Sigma u =g_\Sigma $}, 
\]
the Dirichlet solution for the Laplace operator. 
Such operator  is continuous, not only for $s\in (0,1)$, but it attains the end point $s=1$ as well (cf.  \cite[Chapter 6]{McLean:2000}; see Theorem 6.12 and the discussion following it). 

We are ready to define the desired projection on the finite element space on the boundary $\gamma_\Sigma \mathbb{P}_{h,d}$. To this end, we first set
\[
 {\rm P}_\Sigma^h:=\gamma_\Sigma\Pi_h  {\rm R}_{\Sigma \Omega}.
\]

\begin{proposition}\label{prop:A2}
 Let ${\rm P}_\Sigma^h: H^s(\Sigma)\to \gamma_\Sigma\mathbb{P}_{h,d}$ as above. Then 
 \begin{equation}\label{eq:5.1}
\|{\rm P}_\Sigma^hf_\Sigma -  f_\Sigma\|_{H^s(\Sigma)}\le C  h_\Sigma^{t-s}\|f_\Sigma\|_{H^{t}(\Sigma)},\quad  0\le s<1,\quad s\le t<d+1,\quad t>0,
\end{equation}
where $C$ is independent of $f_\Sigma$ and $h$.  Furthermore, for any $s\in [0,1) $ and $t>d+1$, there exists $C>0$ such that 
 \begin{equation}\label{eq:A:02b}
  \|{\rm P}^h_{\Sigma} f_\Sigma-f_\Sigma\|_{ H^s(\Sigma)}\le C h_\Sigma^{d+1-s}\|f\|_{H^t(\Sigma)}.
 \end{equation} 
\end{proposition}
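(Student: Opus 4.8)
The plan is to base everything on the factorisation ${\rm P}_\Sigma^h=\gamma_\Sigma\Pi_h{\rm R}_{\Sigma\Omega}$, a quasi-optimality argument, and Proposition~\ref{prop:A1}. First I would verify that ${\rm P}_\Sigma^h$ reproduces boundary finite element functions. If $f_\Sigma=\gamma_\Sigma v_h$ with $v_h\in\mathbb{P}_{h,d}$, then $w:={\rm R}_{\Sigma\Omega}f_\Sigma$ satisfies $\gamma_\Sigma(w-v_h)=0$, so \eqref{eq:Ph:02} gives $\gamma_\Sigma\Pi_h(w-v_h)=0$ while \eqref{eq:Ph:01} gives $\Pi_h v_h=v_h$; hence ${\rm P}_\Sigma^h f_\Sigma=\gamma_\Sigma\Pi_h w=\gamma_\Sigma v_h=f_\Sigma$. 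Thus ${\rm P}_\Sigma^h$ is a projection onto $\gamma_\Sigma\mathbb{P}_{h,d}$, and once uniform $H^s(\Sigma)$-stability is available the error is quasi-optimal:
\[
\|{\rm P}_\Sigma^h f_\Sigma-f_\Sigma\|_{H^s(\Sigma)}\le\bigl(1+\|{\rm P}_\Sigma^h\|_{H^s(\Sigma)\to H^s(\Sigma)}\bigr)\inf_{p_h\in\gamma_\Sigma\mathbb{P}_{h,d}}\|f_\Sigma-p_h\|_{H^s(\Sigma)}.
\]

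Next I would prove the stability bound $\|{\rm P}_\Sigma^h\|_{H^s(\Sigma)\to H^s(\Sigma)}\le C$, uniformly in $h$. For $s\in(0,1)$ this follows by composing three continuous maps: the trace $\gamma_\Sigma\colon H^{s+1/2}(\Omega)\to H^s(\Sigma)$ (continuous for $s\in(0,1)$), the $H^{s+1/2}(\Omega)$-stability of $\Pi_h$ (the equal-index case of \eqref{eq:Ph:05}, legitimate since $s+1/2\in(1/2,3/2)$), and the right inverse ${\rm R}_{\Sigma\Omega}\colon H^s(\Sigma)\to H^{s+1/2}(\Omega)$ (continuous for $s\in(0,1]$).

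With stability in hand, the convergence estimates for $s\in(0,1)$ come from quasi-optimality together with interpolation in the source index. On one hand, for any $t_1\in(1/2,d+1)$, Proposition~\ref{prop:A1} (estimate \eqref{eq:A:01}, applied to the nodal interpolant as a competitor in the infimum) gives $\|{\rm P}_\Sigma^h f_\Sigma-f_\Sigma\|_{H^s(\Sigma)}\le Ch_\Sigma^{t_1-s}\|f_\Sigma\|_{H^{t_1}(\Sigma)}$; on the other hand, stability gives the trivial bound at $t=s$ with rate $h_\Sigma^0$. Viewing ${\rm P}_\Sigma^h-{\rm I}$ as a linear operator, interpolation of the source spaces $[H^s(\Sigma),H^{t_1}(\Sigma)]$ then yields the rate $h_\Sigma^{t-s}$ for every intermediate $t\in[s,t_1]$, and Proposition~\ref{prop:A1} directly covers $t\in[t_1,d+1)$; this establishes \eqref{eq:5.1} for $s\in(0,1)$. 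The saturated estimate \eqref{eq:A:02b} for $t>d+1$ is obtained identically, using \eqref{eq:A:02} in place of \eqref{eq:A:01}. I stress that routing the high-regularity rates through Proposition~\ref{prop:A1} (rather than through $\gamma_\Sigma\Pi_h{\rm R}_{\Sigma\Omega}$ directly) is essential: it avoids the half-derivative penalty of the trace step and sidesteps the fact that, on the \emph{polygonal} boundary, ${\rm R}_{\Sigma\Omega}$ gains a full half-derivative only for $t\le 1$ because of corner singularities.

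The main obstacle is the endpoint $s=0$, i.e. stability and convergence in $L^2(\Sigma)$, exactly where the trace theorem degenerates and the argument of the second paragraph fails. I would establish $L^2(\Sigma)$-stability of ${\rm P}_\Sigma^h$ separately, via the polynomial discrete trace inequality $\|\gamma_e p\|_{L^2(e)}^2\le Ch_{T_e}^{-1}\|p\|_{L^2(T_e)}^2$ on the boundary-adjacent elements, combined with the $L^2(\Omega)$-stability of the Scott--Zhang operator and the borderline continuity ${\rm R}_{\Sigma\Omega}\colon L^2(\Sigma)\to H^{1/2}(\Omega)$; the delicate point is that all three ingredients sit precisely at the $H^{1/2}$ threshold, so the bookkeeping of the boundary-layer mass must be done carefully. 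Once $L^2(\Sigma)$-stability is secured, the entire line $s=0$ follows from the same machinery as above: Proposition~\ref{prop:A1} (valid for $t>1/2$) supplies the estimate at $t_1>1/2$, $L^2(\Sigma)$-stability supplies the trivial bound at $t=0$, and interpolation of the source spaces $[L^2(\Sigma),H^{t_1}(\Sigma)]$ fills the remaining range $t\in(0,1/2]$, completing \eqref{eq:5.1}.
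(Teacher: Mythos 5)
For $s\in(0,1)$ your argument is essentially correct: the projection property, the uniform $H^s(\Sigma)$-stability of ${\rm P}_\Sigma^h=\gamma_\Sigma\Pi_h{\rm R}_{\Sigma\Omega}$ obtained by composing the trace, the equal-index case of \eqref{eq:Ph:05} and the right inverse, and then quasi-optimality together with Proposition~\ref{prop:A1} and interpolation in the source index do recover \eqref{eq:5.1} and \eqref{eq:A:02b} on that range. The paper organizes this differently (it composes the error estimates directly through $\Pi_h{\rm R}_{\Sigma\Omega}f_\Sigma-{\rm R}_{\Sigma\Omega}f_\Sigma$ for $t\le 1$, and reduces $t>1$ to $t=1$ by inserting ${\rm Q}_\Sigma^hf_\Sigma$ and invoking Proposition~\ref{prop:A1}), but for $s>0$ this is a difference of bookkeeping, not of substance.

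The genuine gap is at $s=0$, precisely where you flag the difficulty. Your entire line $s=0$ rests on uniform $L^2(\Sigma)$-stability of ${\rm P}_\Sigma^h$, which you do not prove, and the route you sketch does not close: after the discrete trace inequality you must control $h^{-1}\sum_{T}\|\Pi_h{\rm R}_{\Sigma\Omega}f_\Sigma\|^2_{L^2(T)}$ over the boundary-adjacent elements with only $\|{\rm R}_{\Sigma\Omega}f_\Sigma\|_{H^{1/2}(\Omega)}\le C\|f_\Sigma\|_{L^2(\Sigma)}$ at your disposal, i.e.\ you need a boundary-strip estimate of the form $\|w\|^2_{L^2(\{{\rm dist}(\cdot,\Sigma)<h\})}\le C h\,\|w\|^2_{H^{1/2}(\Omega)}$, which fails at the $H^{1/2}$ endpoint (the trace is not bounded from $H^{1/2}(\Omega)$ into $L^2(\Sigma)$, and the strip estimate inherits a logarithmic loss there). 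The paper never needs $L^2(\Sigma)$-stability of ${\rm P}_\Sigma^h$: it applies the multiplicative trace inequality $\|\gamma_\Sigma v\|^2_{L^2(\Sigma)}\le C(1+\tfrac12 h_\Sigma^{-1})\|v\|^2_{L^2(\Omega)}+\tfrac12 C h_\Sigma\|v\|^2_{H^1(\Omega)}$ to the \emph{volume error} $v=\Pi_h{\rm R}_{\Sigma\Omega}f_\Sigma-{\rm R}_{\Sigma\Omega}f_\Sigma$, whose $L^2(\Omega)$-norm carries one extra full power of $h$ relative to its $H^1(\Omega)$-norm by \eqref{eq:Ph:05}; the weights $h_\Sigma^{-1/2}$ and $h_\Sigma^{1/2}$ then balance and the full rate $h_\Sigma^{t}\|f_\Sigma\|_{H^t(\Sigma)}$ survives. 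Replacing your stability-plus-quasi-optimality step at $s=0$ by this direct estimate on the error is the missing idea.
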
 
\begin{proof}
 By construction,   $\Pi_h $ is a projection. Indeed, if ${\rm Q}_h:{\cal C}(\Omega)\to \mathbb{P}_{h,d}$ is the nodal $d-$Lagrange interpolation operator, and since $\gamma_\Sigma {\rm Q}_h = {\rm Q}^h_{\Sigma}\gamma_\Sigma$, we have 
 that  
\begin{eqnarray*}
 \gamma_\Sigma \Pi_h
 {\rm R}_{\Sigma \Omega} f_\Sigma^h-f_\Sigma^h&=& 
 \gamma_\Sigma (\Pi_h -{\rm Q}_h) {\rm R}_{\Sigma \Omega }  f_\Sigma^h
 = \gamma_\Sigma \Pi_h\underbrace{({\rm  I} -{\rm Q}_h) {\rm R}_{\Sigma \Omega }  f_\Sigma^h}_{\in H_0^1(\Omega)}  = 0
\end{eqnarray*}
by \eqref{eq:Ph:02}.
 
On the other hand, for $t\in(1/2,1]$ with $t\ge s$, and $s\in (0,1)$, and by the continuity of the trace operator,  
\begin{eqnarray}
 \|{\rm P}_\Sigma^hf_\Sigma -  f_\Sigma\|_{H^s(\Sigma)} 
 &\le& 
  C \|\Pi_h 
 {\rm R}_{\Sigma \Omega }  f_\Sigma -    {\rm R}_{\Sigma \Omega }  f_\Sigma\|_{H^{s+1/2}(\Omega)}
 \nonumber\\
  &\le&
  C' h_\Sigma^{t-s}\|
 {\rm R}_{ \Omega \Sigma }  f_\Sigma\|_{H^{t+1/2}(\Omega)}\le 
  C'' h_\Sigma^{t-s}\| f_\Sigma\|_{H^{t}(\Sigma)} \label{eq:5.5}.
\end{eqnarray}

To prove the estimate in $H^0(\Sigma)=L^2(\Sigma)$ we recall  the trace inequality cf \cite[Th. 1.6.6]{BrSc:2002}
\[
 \|\gamma_\Sigma u\|_{L^2(\Sigma)}^2\le C \big(\|u\|_{L^2(\Omega)}^2+ \|u\|_{L^2(\Omega)}\|u\|_{H^1(\Omega)}\big)\le  C(1+\tfrac12h_\Sigma^{-1})\|u\|_{L^2(\Omega)}^2+ \tfrac12 C h_\Sigma \|u\|^2_{H^1(\Omega)}
\]
which yields 
\begin{eqnarray*}
 \|{\rm P}_\Sigma^hf_\Sigma -  f_\Sigma\|_{L^2(\Sigma)}&\le &C h_\Sigma^{1/2}
\|\Pi_h 
 {\rm R}_{ \Omega \Sigma }  f_\Sigma -    {\rm R}_{ \Omega \Sigma }  f_\Sigma\|_{H^{1}(\Omega)}+C h_\Sigma^{-1/2}\|\Pi_h 
 {\rm R}_{ \Omega \Sigma }  f_\Sigma -    {\rm R}_{ \Omega \Sigma }  f_\Sigma\|_{H^{0}(\Omega)} \\\
 &\le& C'h_\Sigma^{t}\|{\rm R}_{ \Omega \Sigma }  f_\Sigma\|_{H^{t+1/2}(\Omega)}\le 
 C'h^t_\Sigma\| f_\Sigma\|_{H^t(\Sigma)}. 
\end{eqnarray*}
We have then proved  \eqref{eq:5.1} for $0\le s<1$ and $\max\{s,1/2\}<t\le 1$ and therefore it only remains to extend this result for $t>1$.  But, 
\begin{eqnarray*}
 \|{\rm P}_\Sigma^hf_\Sigma -  f_\Sigma\|_{H^s(\Sigma)}&=&
 \|{\rm P}_\Sigma^h(f_\Sigma -  {\rm Q}_\Sigma^h f) -  (f_\Sigma -  {\rm Q}_\Sigma^h f)\|_{H^s(\Sigma)}\le  C h_\Sigma^{1-s}\|f_\Sigma -  {\rm Q}_\Sigma^h f_\Sigma\|_{H^1(\Sigma)}
\end{eqnarray*}
and the result follows from Proposition \ref{prop:A1}.

\end{proof}

Note that, using the above arguments, \eqref{eq:5.1} cannot be extended to $s=1$.

\subsection{Commutator properties for Scott-Zhang projections} 
We end this section showing a commutation property for $\Pi_h$. We stress out that the result is also valid in 3D for tetrahedral meshes with minor but direct modifications.

\begin{lemma}\label{lemma:A2}
For any $\varpi\in{\cal C}^\infty(\Omega)$ 
there exists  $C>0$ so that for $d\ge 2$ and $s\in [0,3/2)$
\[
\|\Pi_h(\varpi w_h)-\varpi w_h\|_{H^s(\Omega)}\le 
C h \|w_h\|_{H^s(\Omega)},\quad \forall w_h\in\mathbb{P}_{h,d}.
\]
For $d=1$ (linear finite elements), we have instead
\[
\|\Pi_h(\varpi w_h)-\varpi w_h\|_{H^s(\Omega)}\le 
C h^{\min\{1,2-s\}} \|w_h\|_{H^s(\Omega)},\quad \forall w_h\in\mathbb{P}_{h,1}.
\]
\end{lemma}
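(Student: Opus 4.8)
The plan is to prove the estimate elementwise and then assemble, exploiting that $\Pi_h$ reproduces $\mathbb{P}_{h,d}$ and that $\varpi$ is smooth. Fix a triangle $T$ with patch $S_T$ and a point $x_T\in S_T$; set $c_T:=\varpi(x_T)$ and $r:=\varpi-c_T$, so that $\|r\|_{L^\infty(S_T)}\le C h$ while $\|D^j r\|_{L^\infty(S_T)}\le C$ for $j\ge 1$, with $C$ depending only on $\varpi$. Since $c_T w_h\in\mathbb{P}_{h,d}$, property \eqref{eq:Ph:01} gives $\Pi_h(c_T w_h)=c_T w_h$, and therefore on $T$ one has the exact identity $\Pi_h(\varpi w_h)-\varpi w_h=(\Pi_h-\mathrm{I})(r w_h)$. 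This is the source of the extra power of $h$: the multiplier $r$ is $O(h)$ on the patch.

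First I would treat $s\in[0,1/2)$. Here $r w_h\in H^{s+1}(S_T)$ because $s+1<3/2$ and $w_h$ is continuous and piecewise polynomial, so \eqref{eq:Ph:04} with $t=s+1$ applies and yields $\|(\Pi_h-\mathrm{I})(r w_h)\|_{H^s(T)}\le C h\,|r w_h|_{H^{s+1}(S_T)}$. A Leibniz expansion together with the elementwise inverse inequality $|w_h|_{H^{s+1}(K)}\le C h_K^{-1}|w_h|_{H^{s}(K)}$, applied to the top-order term where $\|r\|_{L^\infty}\le Ch$, gives $|r w_h|_{H^{s+1}(S_T)}\le C\|w_h\|_{H^s(S_T)}$, whence $\|\Pi_h(\varpi w_h)-\varpi w_h\|_{H^s(T)}\le C h\|w_h\|_{H^s(S_T)}$. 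Summing over $T$ via \eqref{eq:splitSobolevNorm} (valid for $s\in[0,1/2)$) and the finite overlap of the patches proves the claim for these $s$.

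The range $s\in[1,3/2)$ is the delicate one, since now $r w_h\notin H^{s+1}(S_T)$ (the gradient of $w_h$ jumps across interfaces) and \eqref{eq:Ph:04} is unusable with $t=s+1$. To circumvent this I would insert the nodal interpolant $\mathrm{Q}_h(r w_h)\in\mathbb{P}_{h,d}$, which $\Pi_h$ reproduces, so that $(\Pi_h-\mathrm{I})(r w_h)=-(\Pi_h-\mathrm{I})\eta_0$ with $\eta_0:=\mathrm{Q}_h(r w_h)-r w_h$; using only the $H^s$-stability of $\Pi_h$ (itself a consequence of \eqref{eq:Ph:04} with $t=s$, $1/2<s<3/2$) this gives $\|\Pi_h(\varpi w_h)-\varpi w_h\|_{H^s(T)}\le C\|\eta_0\|_{H^s(S_T)}$, with no high-regularity demand on $r w_h$ across the patch. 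On each $K\subset S_T$ the function $r w_h$ is smooth, so $\|\eta_0\|_{H^s(K)}\le C h_K^{\,t-s}|r w_h|_{H^{t}(K)}$ for any $t\le d+1$; choosing $t=s+1$ (admissible since $s+1<5/2\le d+1$ for $d\ge2$) and repeating the Leibniz/inverse-inequality argument yields $\|\eta_0\|_{H^s(K)}\le C h\|w_h\|_{H^s(K)}$. Assembling over $S_T$ via \eqref{eq:splitSobolevNorm} (valid again since $s\in[1,3/2)$) and over $T$ proves the bound for $d\ge2$. For $d=1$ the choice $t=s+1$ is admissible only up to $s\le1$; for $s\in(1,3/2)$ one is capped at $t=d+1=2$, which produces the factor $h^{2-s}$ and hence the exponent $\min\{1,2-s\}$, where one additionally uses that the top-order seminorms of a linear $w_h$ vanish.

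Finally, the intermediate range $s\in[1/2,1)$, where \eqref{eq:splitSobolevNorm} is not available, I would recover by interpolating the two estimates already obtained at some $s_0\in[0,1/2)$ and at $s_1=1$, viewing $w_h\mapsto \Pi_h(\varpi w_h)-\varpi w_h$ as a linear map on $\mathbb{P}_{h,d}$ endowed with the $H^s$-norms and invoking the $h$-uniform equivalence of the interpolation scale between $\mathbb{P}_{h,d}$ with the $L^2$- and the $H^1$-norm and $\mathbb{P}_{h,d}$ with the $H^s$-norm. I expect this last step --- making the interpolation across the gap $[1/2,1)$ rigorous with constants independent of $h$, rather than the elementwise bookkeeping --- to be the main obstacle; it can be settled by a scaling argument on the reference patch or by the abstract discrete-interpolation framework of \cite{MR1735891}.
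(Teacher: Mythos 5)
Your argument is correct, but it reaches the estimate through a different superapproximation mechanism than the paper. You freeze the coefficient on each patch, writing $\varpi=\varpi(x_T)+r$ with $\|r\|_{L^\infty(S_T)}\le Ch$ and exploiting $\Pi_h(\varpi(x_T)w_h)=\varpi(x_T)w_h$, so the extra power of $h$ comes from the smallness of the residual multiplier $r$ — the classical Nitsche--Schatz device. The paper instead inserts the nodal interpolant ${\rm Q}_h(\varpi w_h)$ of the \emph{full} product and chains $\|\Pi_h v-v\|_{H^s(T)}\le Ch_T^{1-s}\|v\|_{H^1(S_T)}$ (for $s<1/2$) or the $H^s$-stability of $\Pi_h$ (for $s\in[1,3/2)$) with $\|\varpi w_h-{\rm Q}_h(\varpi w_h)\|\le Ch_T^{d+1-s}\|\varpi w_h\|_{H^{d+1}(T)}$; there the gain of one power of $h$ comes from the identity $\|w_h\|_{H^{d+1}(T)}=\|w_h\|_{H^{d}(T)}$ (the $(d+1)$-st derivatives of a degree-$d$ polynomial vanish) followed by the inverse inequality down to $H^s$, and the $d=1$ loss $h^{2-s}$ arises for exactly the same reason as in your version. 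Your route buys a more transparent explanation of where the $O(h)$ comes from and avoids the $H^{d+1}$-norm bookkeeping; the paper's route avoids the fractional Leibniz estimates you need, e.g.\ $\|r w_h\|_{H^{s+1}(S_T)}\le C\|w_h\|_{H^s(S_T)}$, whose commutator term $\|\nabla r\|_{L^\infty}\|\nabla w_h\|_{L^2(K)}$ is only harmless because the Slobodeckij integral over an element of diameter $h_K$ supplies a compensating factor $h_K^{1-s}$ — a line you should add. Two further small points: \eqref{eq:Ph:04} is stated with the full norm $\|v\|_{H^t(S_T)}$, not the seminorm you quote; and the interpolation across $s\in(1/2,1)$, which you rightly flag as the delicate step requiring an $h$-uniform discrete interpolation scale, is handled by the paper in exactly the same way (indeed more tersely), so it is not a gap relative to the paper's own standard of rigor.
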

\begin{proof}
Consider ${\rm Q}_h:{\cal C}^0\to \mathbb{P}_{h,d}$ the classical nodal interpolant on
 $\mathbb{P}_{h,d}$ which satisfies
 \begin{equation}\label{eq:la_eq}
 \|u -{\rm Q}_h u\|_{H^s(T)}\le  C_{s,t}h_T^{t-s}  \|u \|_{H^t(T)}, \qquad \forall T\in{\cal T}_h
 \end{equation}
 for any $0\le s<3/2$, and $s\le t\le d+1$ with $t>1$ for triangular meshes in bidimensional polygonal domains ($t>3/2$ for 3D polygonal domains). 
 In~\eqref{eq:la_eq}, $h_T$ is the diameter of the element $T$, and $C_{s,t}$ is a constant independent of $T$ and $u_h$.

Recall  also the inverse inequalities which hold locally on each triangle:
\begin{equation}\label{eq:inv:inequalities}
 \|w_h\|_{H^s (T)}\le C_{s,t} h_T^{ t- s}\|w_h\|_{H^t(T)},\quad \forall w_h\in\mathbb{P}_{h,d}
\end{equation}
with $C$ depending again only on $d$, $s\ge t$ and the chunkiness parameter of the grid. Then,  locally on each element it holds
 \[
\|\varpi w_h\|_{H^{d+1}(T)}\le C_{\varpi}\|w_h\|_{H^{d+1}(T)}=
   C_{\varpi}\|w_h\|_{H^{d}(T)}\le C'_{\varpi,s} h_T^{s-d-1}\|w_h\|_{H^{s}(T)}
 \]
 for any $s\in[0,d]$.  
  
We  start with the $s\in [0,1/2)$ case. Using   \eqref{eq:splitSobolevNorm}, the convergence properties of $\Pi_h$ \eqref{eq:Ph:04}, the interpolant operator ${\rm Q}_h$
and the  inverse inequality \eqref{eq:inv:inequalities},
 \begin{align*}
\|\Pi_h(\varpi w_h)  - & \varpi w_h\|_{H^s(\Omega)}^2 \\
 & \le  C_s \sum_{T\in{\cal T}_h}
\|\Pi_h(\varpi w_h-{\rm Q}_h(\varpi w_h))-(\varpi w_h-{\rm Q}_h(\varpi w_h))\|_{H^s(T)}^2\\
&\le 
C_s' \sum_{T\in{\cal T}_h} h_T^{2-2s}
\|  \varpi w_h-{\rm Q}_h(\varpi w_h)\|_{H^1(T)}^2 \le C'_s\sum_{T\in{\cal T}_h} h_T^{2d+2-2s}
 \| \varpi w_h\|_{H^{d+1}(T)}^2 \\
 & \le 
 C''_ {s,\varpi}\sum_{T\in{\cal T}_h}  h_T^{2}
 \| w_h\|_{H^{s}(T)}^2 \le C'_{s,\varpi} h^{2}\|w_h\|^2_{H^s(\Omega)}.
  \end{align*}

For $s\in[1,3/2)$, we proceed in a similar manner: Using the stability of the Scott-Zhang type projection 
   \begin{align*}
\|\Pi_h(\varpi w_h)  -  \varpi w_h\|_{H^s(\Omega)}^2  & \le  C_s \|{\rm Q}_h(\varpi w_h)  -   \varpi w_h\|_{H^s(\Omega)}^2\\
 & \le C'_ {s}\sum_{T\in{\cal T}_h}  h_T^{2d-2s}
 \| \varpi  w_h\|_{H^{d+1}(T)}^2\le C_ {s,\varpi}\sum_{T\in{\cal T}_h}  h_T^{2d-2s}
 \|  w_h\|_{H^{d}(T)}^2. 
  \end{align*}
  The result follows now using either the direct bound
  \[
  \|  w_h\|_{H^{d}(T)}\le \|  w_h\|_{H^{s}(T)},\quad \text{for $d =1$},
  \]
  or the inverse inequality \eqref{eq:inv:inequalities} for $d\ge 2$. 
  
 We have then proven the result for $s\in[0,1/2)\cup [1,3/2)$. For the intermediate values of $s\in(1/2,1)$ we just  invoke the Interpolation Theory of Sobolev spaces. 
\end{proof}

%
\begin{remark} We note that in~\cite{MR1735891} the weaker estimate is proven:
\[
 \|\Pi_h(\varpi v_h)-\varpi v_h\|_{H^s(\Omega)}\le 
 C h^s \|v_h\|_{H^s(\Omega)},\quad \forall v_h\in\mathbb{P}_{h,d}
 \]
 for $s\in(1/2,1]$ and for any projection on finite element spaces satisfying rather general assumptions which include, in particular, our case.
 \end{remark}

\section{Convergence/superconvergence  of FEM in stronger  norms}

In this appendix   $\Omega$ is a polygonal domain with boundary $\Sigma$ in both $\mathbb{R}^m$ with $m=2,3$ and
\[
 b_{n,\Omega}(u,v):=(\nabla u,\nabla v)_\Omega -  (u,v)_{n,\Omega}:= \int_{\Omega}\nabla u\cdot\nabla v-  \int_{\Omega}  n \,uv
\]
is the bilinear form associated to the Dirichlet problem 
\[
\left|
\begin{array}{rcl}
 \Delta u+ n u &=&f\\
 \gamma_\Sigma u&=&g_\Sigma.
 \end{array}
 \right.
\]
Here $1-n$ is a $L^2-$function with compact support $\Omega_0$ in $\Omega$.  As before we  consider a sequence of regular grids made up of conformal triangular/tetrahedral elements ${\cal T}_h$. The parameter $h$ refers again to the maximum of the diameters of the elements in such a way that we write $h\to 0$ to mean that the diameters of the elements tend to zero. On ${\cal T}_h$ we construct a continuous finite element space of the elements which   are polynomials of degree $d$ on each $T\in{\cal T}_h$  and denote it by $\mathbb{P}_{h,d}$. 

We will work with the numerical solution given by the usual FEM scheme:
\[
\left|
\begin{array}{rcl}
u_h\in \mathbb{P}_{h,d}\\
 b_{n,\Omega}(u_h,v_h)&=&-(f,v_h)_\Omega,\quad \forall v_h\in\mathbb{P}_{h,d} \cap H_0^1(\Omega)\\
 \gamma_\Sigma u&=&g_\Sigma^h.
 \end{array}
 \right.
\]
Here, $  \gamma_\Sigma\mathbb{P}_{h,d} \ni g_\Sigma^h\approx g_\Sigma$. 
Some preliminary results can be listed at this point. Provided that $g_\Sigma\in H^{1/2}(\Sigma)$, $f\in L^2(\Omega)$ then  $u\in H^1(\Omega)$ with 
\[
 \|u\|_{H^1(\Omega)}\le C\big(\|f\|_{L^2(\Omega)}+\|g_\Sigma\|_{H^{1/2}(\Sigma)}\big).
\]
We  recall  the standard error result~\cite{ScZh:1990,MR2033124}
  \[
  \|u-u_h\|_{H^1(\Omega)}\le C\Big[ \inf_{v_h\in \mathbb{P}_{h,d}}\|u-v_h\|_{H^1(\Omega)}+
 \|g_\Sigma- g_\Sigma^h\|_{H^{1/2}(\Omega)}
 \Big]
 \]
which gives us a  convergence estimate in the natural norm. We however want to explore convergence estimates in stronger norms in the subdomains where the solution $u$ is more regular. The following theorem presents the main result of this appendix. The proof of this result uses similar ideas as those presented in \cite{MR0373325} for proving superconvergence in $H^1$ for a more general class of partial differential equations. We include it in this work for the sake of completeness.

\begin{theorem}\label{theo:A02} Let  $D'$,  $D$ be two domains  with $D'\subset \overline{D}'
\subset D\subset  \overline{D}\subset \Omega\setminus\Omega_0$ and let $\{{\cal T}_h\}$ be a sequence of regular grids with $h\to 0$ which are  \changeSecondReferee{quasi-uniform in $D'$ cf. \eqref{eq:quasiuniform}}.  Then for any  $\varepsilon\in[0,1/2)$,  $u\in H^{r+1}(D')$ with $r\in[\varepsilon, d]$,  there exists $C>0$ depending on $\varepsilon$, $r$, $D$, $D'$ so that  for any grid fine enough ${\cal T}_h$ it holds
 \begin{equation}\label{eq:01:theo:A02}
  \|u-u_h\|_{H^{1+\varepsilon}(D')}\le C\big[ h^{-\varepsilon}_D \|u-u_h\|_{H^{0}(D)}+ h_{D}^{r-\varepsilon} \|u\|_{H^{r+1}(D)} +
  h^{1-\varepsilon}_D \|u-u_h\|_{H^{1}(D)}\big]
 \end{equation}
 where $h_{D}$ is the maximum of the diameters of  the \changeV{elements} contained in $D$. 
\end{theorem}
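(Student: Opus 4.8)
The plan is to split the error $e := u - u_h$ as $e = \eta + \chi$, with $\eta := u - \Pi_h u$ and $\chi := \Pi_h u - u_h \in \mathbb{P}_{h,d}$, using the Scott--Zhang projection $\Pi_h$ of Appendix A, and to reduce the $H^{1+\varepsilon}(D')$ bound to an interior $H^1$ estimate. For the discrete part I would invoke the local inverse inequality \eqref{eq:inv:inequalities} on the grid, quasi-uniform in $D'$, to get $\|\chi\|_{H^{1+\varepsilon}(D')} \le C h_D^{-\varepsilon}\|\chi\|_{H^1(D')}$, while for the approximation part the local estimate \eqref{eq:Ph:04} yields $\|\eta\|_{H^{1+\varepsilon}(D')} \le C h_D^{r-\varepsilon}\|u\|_{H^{r+1}(D)}$ (admissible since $1+\varepsilon<3/2$ and $\varepsilon\le r\le d$). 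Combining these with $\|\chi\|_{H^1(D')} \le \|e\|_{H^1(D')} + \|\eta\|_{H^1(D')}$ reduces \eqref{eq:01:theo:A02} to the single interior energy estimate
\[
\|e\|_{H^1(D')} \le C\big(\|e\|_{L^2(D)} + h_D^{r}\|u\|_{H^{r+1}(D)} + h_D\,\|e\|_{H^1(D)}\big).
\]

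The heart of the matter is this Nitsche--Schatz type estimate. Its starting point is Galerkin orthogonality: since $1-n$ is supported in $\Omega_0$ and $\overline D\subset\Omega\setminus\Omega_0$, on $D$ the form reduces to $b_{1,D}(\cdot,\cdot) = (\nabla\cdot,\nabla\cdot)_D - (\cdot,\cdot)_D$, and $b_{1,D}(e,v_h)=0$ for every $v_h\in\mathbb{P}_{h,d}\cap H_0^1(D)$. I would fix nested domains $D'\subset\subset D_1\subset\subset\cdots\subset\subset D$ with smooth cutoffs $\omega$ equal to $1$ on the inner set and supported in the next, and test the orthogonality with $v_h:=\Pi_h(\omega^2\chi)$; quasi-locality of $\Pi_h$ ensures that, for $h$ small, this vanishes near $\partial D$ and its zero-extension lies in $\mathbb{P}_{h,d}\cap H_0^1(\Omega)$. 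Writing $v_h = \omega^2\chi + (\Pi_h(\omega^2\chi)-\omega^2\chi)$, the leading term produces $\|\omega\nabla\chi\|_{L^2}^2$ up to a gradient cross term absorbable by Young's inequality and an $L^2$ remainder, while the discrepancy $\Pi_h(\omega^2\chi)-\omega^2\chi$ is exactly the superapproximation term, controlled by the commutator property of Lemma \ref{lemma:A2}, namely $\|\Pi_h(\omega^2\chi)-\omega^2\chi\|_{H^1} \le C h\|\chi\|_{H^1}$. The right-hand side $-b_{1,D}(\eta,v_h)$ is bounded by $\|\eta\|_{H^1(D)}\,\|v_h\|_{H^1}$, and \eqref{eq:Ph:04} again controls $\|\eta\|_{H^1(D)}$ by $h_D^{r}\|u\|_{H^{r+1}(D)}$.

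The resulting one-step inequality bounds $\|\nabla\chi\|_{L^2}$ on an inner domain by the $L^2$ error and the best-approximation term, plus a fraction of $\|\nabla\chi\|_{L^2}$ on the next, slightly larger, domain carrying a factor $Ch$ from the commutator bound. The main obstacle, as in all interior estimates, is that the superapproximation term lives on a set strictly larger than the one being estimated, so a single cutoff delivers only a suboptimal power of $h$ and a non-negligible coefficient on $\|e\|_{H^1(D)}$. I would resolve this by iterating the one-step inequality over the finite chain of nested subdomains, using the contraction furnished by the absorbable terms to simultaneously upgrade the power of $h_D$ multiplying $\|e\|_{H^1(D)}$ to the linear one required above and to drive the residual interior-gradient contribution down. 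The passage through intermediate values $r\in[\varepsilon,d]$ and the cutoff-estimate range $s\in(1/2,1)$ is then closed by interpolation of Sobolev spaces, exactly as in Appendix A. The delicate bookkeeping of the cutoff chain and the repeated, carefully localized use of Lemma \ref{lemma:A2} is where the real effort concentrates.
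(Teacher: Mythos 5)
Your proposal is correct and rests on the same three pillars as the paper's proof --- the local inverse inequality, the Scott--Zhang superapproximation/commutator property of Lemma \ref{lemma:A2}, and Galerkin orthogonality localized by a smooth cutoff --- but the execution differs in one substantive way. You test the localized orthogonality relation with $\Pi_h(\omega^2\chi)$, which makes the superapproximation term quadratic in $\chi$: after Young's inequality and a square root you gain only $h^{1/2}$ per cutoff, and you are therefore forced to iterate over a finite chain of nested subdomains to upgrade the coefficient of $\|u-u_h\|_{H^1(D)}$ to the full power $h_D$ (two steps in fact suffice, since the statement tolerates the term $h_D^{1-\varepsilon}\|u-u_h\|_{H^1(D)}$ on the right-hand side rather than demanding its complete elimination as in the classical Nitsche--Schatz theorem). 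The paper avoids the iteration altogether: it writes $\|\widetilde{e}_h\|_{H^1(D)}=\beta_D(\widetilde{e}_h,v_h)$ with the \emph{normalized} test function $v_h=\widetilde{e}_h/\|\widetilde{e}_h\|_{H^1(D)}$, so that the commutator term $\beta_D(e_h,\varpi v_h-\Pi_h(\varpi v_h))$ is bounded by $Ch_D\|e_h\|_{H^1(D)}\|v_h\|_{H^1(D)}=Ch_D\|e_h\|_{H^1(D)}$ --- linear in the error, with the full factor $h_D$ obtained from a single cutoff. A second, minor difference is that the paper applies the inverse inequality directly to the localized discrete error $\Pi_h(\varpi u)-\Pi_h(\varpi u_h)$ and works at the $H^{1+\varepsilon}$ level throughout, whereas you first reduce to an interior $H^1$ estimate; both reductions are legitimate given quasi-uniformity near $D'$ and the element-wise splitting \eqref{eq:splitSobolevNorm} for $1+\varepsilon<3/2$. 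Your route is the classical Nitsche--Schatz bookkeeping and is workable, but it buys nothing extra here; the paper's normalization device buys a shorter, iteration-free proof with a single cutoff function.
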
 
 


\begin{proof}
Let  $\Pi_h$ be a Scott-Zhang type projection satisfying \eqref{eq:Ph}, and  hence  the commutator property stated in Lemma \ref{lemma:A2}.   Take a smooth cut-off function  $\varpi$  satisfying 
\begin{equation}\label{eq:B00}
\begin{aligned}
\varpi|_{D'}&\equiv 1,\quad \mathop{\rm supp }\varpi\subset D,\\
\Pi_h(\varpi v_h)|_{D'}&=v_h|_{D' },\quad\forall v_h\in \mathbb{P}_{h,d},\quad 
\Pi_h(\varpi v)|_D\in H_0^1(D),\quad \forall v\in H^1(D). 
\end{aligned}
\end{equation}
We consider the $H^1(D)$ inner product
\[
\beta_{D}(u,v):= (\nabla u,\nabla v)_{D} +(u,v)_{D}, 
\]
and set
\[
e_h:=u-u_h,\quad \widetilde{e}_h:=\Pi_h(\varpi u)-\Pi_h(\varpi u_h)\in \mathbb{P}_{h,d},\quad \widetilde{e}^h:=\varpi u-\Pi_h(\varpi u).
\]
Using \eqref{eq:B00}, 
\begin{equation}\label{eq:B01}
 \varpi  e_h|_{D'} = (\widetilde{e}_h+\widetilde{e}^h)|_{D'}.
\end{equation}
Then
\begin{eqnarray}
 \|e_h\|_{H^{1+\varepsilon}(D')}&\le& 
 \|\varpi e_h\|_{H^{1+\varepsilon}(D)}\le  \|\widetilde{e}_h\|_{H^{1+\varepsilon}(D)}+
  \|\widetilde{e}^h\|_{H^{1+\varepsilon}(D)},
  \label{eq:I1I2}
 \end{eqnarray}
For  bounding the first term we start applying the inverse inequality 
\begin{equation}\label{eq:I1}
\|\widetilde{e}_h\|_{H^{1+\varepsilon}(D)}\le C_{\varepsilon} h_{D}^{-\varepsilon}  \|\widetilde{e}_h\|_{H^{1}(D)}
 = C_{\varepsilon} h_{D}^{-\varepsilon}\beta_{D}(\widetilde{e}_h,v_h),\quad \text{with }
 v_h =\frac{1}{\|\widetilde{e}_h\|_{H^1(D)}} \widetilde{e}_h\in \mathbb{P}_{h,d}\cap H_0^1(D), 
\end{equation} 
and continue applying   the decomposition
\begin{eqnarray*}
\widetilde{e}_h& =&  (\varpi u- \varpi u_h) + (\Pi_h-{\rm I})(\varpi u-\varpi \Pi_h u)+
(\Pi_h-{\rm I})(\varpi \Pi_h u-\varpi u_h). 
\end{eqnarray*}
Hence, 
\begin{eqnarray}
\|\widetilde{e}_h\|_{H^{1+\varepsilon}(D)} &\le& C_{\varepsilon}h_{D}^{-\varepsilon}\Big(\beta_{D}(\varpi u- \varpi u_h,v_h) +
 \beta_{D}((\Pi_h-{\rm I})(\varpi u-\varpi \Pi_h u),v_h)\nonumber \\
 && +
 \beta_{D}((\Pi_h-{\rm I})(\varpi \Pi_h u-\varpi u_h),v_h)\Big) \nonumber\\
  &\le& C_{\varepsilon}h_{D}^{-\varepsilon}\Big(\beta_{D}(\varpi e_h,v_h) 
  + \|(\Pi_h-{\rm I})(\varpi u- \varpi \Pi_h u)\|_{H^{1}(D)}\nonumber\\
  && + \|(\Pi_h-{\rm I})(\varpi \Pi_h u-\varpi u_h)\|_{H^{1}(D)}\Big) \nonumber\\ 
  &\le& C_{\varepsilon}h_{D}^{-\varepsilon} \big(\beta_{D}(\varpi e_h,v_h)+
  \|\varpi u- \varpi \Pi_h u\|_{H^{1}(D)}\nonumber+
h_{D}\| (\Pi_h u-u) +(u- u_h)\|_{H^1(D)}\big)\nonumber\\
  &\le& C_{\varepsilon}h_{D}^{-\varepsilon} \big( \beta_{D}(\varpi e_h,v_h)+
\|  \Pi_h u-u\|_{H^{1}(D)}+
h_{D}\| e_h\|_{H^1(D)} \big),  \label{eq:a11}
\end{eqnarray}
where to bound the last term, we used the the commutator property, cf.  Lemma \ref{lemma:A2} with $w_h=\Pi_h u-u_h$.  Next, 
we consider the first  term in~\eqref{eq:a11} and obtain a bounding estimate. 
Observe that
\begin{eqnarray*}
  \beta_{D}(\varpi {e}_h,v_h)&=& \int_{D} e_h\nabla \varpi \cdot\nabla v_h-
   \int_{D} v_h\nabla e_h \cdot\nabla \varpi+\beta_{D}(e_h,\varpi v_h)\\
   &=&2\int_{D} e_h\nabla \varpi \cdot\nabla v_h+
   \int_{D} v_h  e_h \Delta\varpi\\
   &&+\beta_{D}(e_h,\varpi v_h-\Pi_h(\varpi v_h))
   + \beta_{D}(e_h,\Pi_h(\varpi v_h)).
 \end{eqnarray*}
 Thus, and after applying again Lemma \ref{lemma:A2} to the third term above, 
\begin{equation}
 \label{eq:a12}
\beta_{D}(\varpi {e}_h,v_h)\le C\|e_h\|_{L^2(D)}+Ch_{D} \|e_h\|_{H^1(D)} +\beta_{D}(e_h,\Pi_h(\varpi v_h)). 
\end{equation}
Finally, using the orthogonality relation for the Galerkin solution,  
\begin{eqnarray}
\beta_{D}(e_h,\Pi_h(\varpi v_h))&=&
 \underbrace{\int_{\Omega}\nabla e_h\cdot\nabla \Pi_h(\varpi v_h)
 - \int_{\Omega} n\: e_h \Pi_h(\varpi v_h)}_{=0}+\int_{D} (1+n) e_h \Pi_h(\varpi v_h)\nonumber\\
 &\le&  C \|e_h\|_{L^2(D)}\|\varpi v_h\|_{L^2(\Omega)}
 \le C \|e_h\|_{L^2(D)}.\label{eq:a13} 
\end{eqnarray}

Plugging \eqref{eq:a13} in  \eqref{eq:a12} and next \eqref{eq:a12} in  \eqref{eq:a11} yield
\begin{equation}\label{eq:I1b}
\|\widetilde{e}_h\|_{H^{1+\varepsilon}(D')}\le C h_{D}^{-\varepsilon}  \big(\|e_h\|_{L^2(D)} +  \|  \Pi_h u-u\|_{H^{1}(D)}+
  h_{D}\| e_h\|_{ L^2(D)}  \big) 
 \end{equation}
Therefore, \eqref{eq:I1b}  with \eqref{eq:I1I2}  prove 
\begin{equation}\label{eq:a19}
\begin{aligned}
\|e_h\|_{H^{1+\varepsilon}(D')}\ \le &\ C_{\varepsilon}\big( h_{D}^{-\varepsilon}\|e_h\|_{L^2(D)}+
h_D^{1-\varepsilon}\|e_h\|_{H^1(D)}\\
\  &\ +  h_{D}^{-\varepsilon} \|  \Pi_h u-u\|_{H^{1}(D)}
+\| \Pi_h(\omega u)-\omega u\|_{H^{1+\varepsilon}(D)}\big),
\end{aligned}
\end{equation}
for any $\varepsilon\in [0,1/2)$. The result follows readily from the convergence estimates for $\Pi_h$.

\end{proof}

A convergence estimate in $L^2(\Omega)$ is needed to obtain a convergence result of the finite element solution in $D$. This is done next using a variant of the classical Aubin-Nische argument 

\begin{lemma}[Aubin-Nitsche trick]\label{lemma:Nitsche-Trick}
 Then for $\delta\in (1/2,1]$ it holds 
 \[
  \|u-u_h\|_{L^2(\Omega)}\le C \Big[ h^\delta\|u-u_h\|_{H^1(\Omega)}+  \|g_\Sigma-g_\Sigma^h\|_{L^2(\Sigma)}\Big]
 \]
 where $h$ denotes the maximum of the diameters of the elements of the mesh ${\cal T}_h$ of $\Omega$. 
\end{lemma}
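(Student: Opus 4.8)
The plan is to run an Aubin--Nitsche duality argument, modified to account for the fact that the error $e:=u-u_h$ does \emph{not} vanish on $\Sigma$ but instead satisfies $\gamma_\Sigma e = g_\Sigma - g_\Sigma^h$. First I would introduce the adjoint problem: let $\psi$ solve
\begin{equation*}
\Delta\psi + n\,\psi = \overline{e}\quad\text{in }\Omega,\qquad \gamma_\Sigma\psi = 0 .
\end{equation*}
This is uniquely solvable under the same wavenumber assumption that makes the primal problem well posed (for real $n$ the form $b_{n,\Omega}$ is symmetric, so the adjoint operator has the same spectrum), and since $\Omega$ is polygonal and $\overline{e}\in L^2(\Omega)$, elliptic regularity on corner domains yields $\psi\in H^{1+\delta}(\Omega)$ with $\|\psi\|_{H^{1+\delta}(\Omega)}\le C\|e\|_{L^2(\Omega)}$ for some $\delta\in(1/2,1]$ (with $\delta=1$ when $\Omega$ is convex).

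Next I would test the dual equation against $e$ and integrate by parts. Using the symmetry of $b_{n,\Omega}$ and that $\gamma_\Sigma\psi=0$, Green's identity produces the key representation
\begin{equation*}
\|e\|_{L^2(\Omega)}^2 = \int_\Sigma (\partial_\nu\psi)\,\gamma_\Sigma e\,{\rm d}s \;-\; b_{n,\Omega}(e,\psi),
\end{equation*}
where $\nu$ is the outward unit normal on $\Sigma$. The boundary integral is exactly the term that is absent in the homogeneous-data version of the argument. For the volume term I would invoke Galerkin orthogonality: since $\psi$ has zero trace, its Scott--Zhang projection satisfies $\Pi_h\psi\in\mathbb{P}_{h,d}\cap H^1_0(\Omega)$ by \eqref{eq:Ph:02}, so $b_{n,\Omega}(e,\Pi_h\psi)=0$ and hence $b_{n,\Omega}(e,\psi)=b_{n,\Omega}(e,\psi-\Pi_h\psi)$.

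It then remains to estimate the two terms. The volume term is controlled by boundedness of $b_{n,\Omega}$ on $H^1\times H^1$ and the approximation estimate \eqref{eq:Ph:05} (with $s=1$, $t=1+\delta$):
\begin{equation*}
|b_{n,\Omega}(e,\psi-\Pi_h\psi)|\le C\|e\|_{H^1(\Omega)}\|\psi-\Pi_h\psi\|_{H^1(\Omega)}\le C h^{\delta}\|e\|_{H^1(\Omega)}\|\psi\|_{H^{1+\delta}(\Omega)},
\end{equation*}
which, after the regularity bound for $\psi$, is $C h^\delta\|e\|_{H^1(\Omega)}\|e\|_{L^2(\Omega)}$. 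For the boundary term I would use that $\delta>1/2$ makes the trace of $\nabla\psi$ an $L^2(\Sigma)$ function, so $\|\partial_\nu\psi\|_{L^2(\Sigma)}\le C\|\psi\|_{H^{1+\delta}(\Omega)}\le C\|e\|_{L^2(\Omega)}$; Cauchy--Schwarz on $\Sigma$ then bounds it by $C\|g_\Sigma-g_\Sigma^h\|_{L^2(\Sigma)}\|e\|_{L^2(\Omega)}$. Collecting both estimates and dividing through by $\|e\|_{L^2(\Omega)}$ yields the claim. I expect the main obstacle to be the boundary term: it is precisely what forces the restriction $\delta>1/2$, since only then is $\partial_\nu\psi\in L^2(\Sigma)$, and on a non-convex polygon the reentrant corner caps $\delta$ strictly below $1$; verifying that the dual problem inherits well-posedness from the primal wavenumber assumption is the other point requiring care.
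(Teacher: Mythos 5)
Your proposal is correct and follows essentially the same route as the paper's proof: a duality argument with a homogeneous-Dirichlet dual problem, Green's identity producing the boundary term $\int_\Sigma \partial_\nu w\, (g_\Sigma-g_\Sigma^h)$ controlled via the trace bound $\|\partial_\nu w\|_{L^2(\Sigma)}\le C\|w\|_{H^{1+\delta}(\Omega)}$, and Galerkin orthogonality applied through the Scott--Zhang projection (which preserves the zero trace), followed by the $O(h^\delta)$ approximation estimate. The only cosmetic differences are the sign/conjugation convention in the dual right-hand side and your added remark on why the dual problem inherits well-posedness, neither of which changes the argument.
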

\begin{proof} Let us denote  as before $e_h:=u-u_h$
and take $w$ the (unique) solution of 
\[
 \left|
 \begin{array}{l}
 w\in H_0^1(\Omega)\\
 \Delta w+n\: w = -e_h
 \end{array}
 \right.\quad \text{or in  variational form},\quad
 \left|
 \begin{array}{l}
 w\in H_0^1(\Omega)\\
 b_{\Omega,n}(w,v)=(e_h,v)_\Omega,\quad \forall v\in H_0^1(\Omega).
 \end{array}
 \right.
\]
It is known that there exists $\delta\in (1/2,1]$ such that 
 \[
\|w\|_{H^{1+\delta}(\Omega)}\le C_{\delta} \|e_h\|_{L^2(\Omega)} 
 \]
with $C_\delta$ independent of $e_h$.

Then
\begin{eqnarray*}
 \|e_h\|_{L^2(\Omega)}^2&=&(e_h,e_h)_\Omega=-(\Delta w+n\:w,e_h)_{\Omega}\\
 &=&b_{\Omega,n}(w,e_h)+\int_\Sigma \partial_n w\, e_h=b_{\Omega,n}(w-\Pi_hw,e_h)+\int_\Sigma \partial_n w\, e_h
\end{eqnarray*}
where we have made use of the Galerkin orthogonality of the discrete solution and the fact that $\Pi_h w\in \mathbb{P}_{h,d}\cap H_0^1(\Omega)$ for $w\in H_0^1(\Omega)$. Notice that from the trace theorem
\[ 
\|\partial_n w\|_{L^2(\Sigma)}\le
\|\gamma_\Sigma \nabla w \|_{L^2(\Sigma)}\le C \| w\|_{H^{1+\delta}(\Omega)}.
\]
Therefore, 
\begin{eqnarray*}
 \|e_h\|_{L^2(\Omega)}^2&\le& C\big[\|w-\Pi_h w\|_{H^1(\Omega)}\|e_h\|_{H^1(\Omega)}+
 \| w\|_{H^{1+\delta}(\Omega)}\|e_h\|_{L^2 (\Sigma)}\big]\\
 &\le&C'\big[h^\delta \|e_h\|_{H^1(\Omega)}+
 \|g_\Sigma-g_\Sigma^h\|_{L^2(\Sigma)}\big]\|w\|_{H^{1+\delta}(\Omega)}\\
& \le& C''\big[h^\delta \|e_h\|_{H^1(\Omega)}+
 \|g_\Sigma-g_\Sigma^h\|_{L^2(\Sigma)}\big]\|e_h\|_{L^2(\Omega)}.
\end{eqnarray*} 
The proof is now completed. 
\end{proof}

The parameter $\delta $ in Lemma~\ref{lemma:Nitsche-Trick} is nothing but the extra regularity grade, from a Sobolev point of view, of the homogeneous Dirichlet problem
\[
 \Delta w+ n\,w = g,\quad  {\gamma_{\Sigma }w} =0. 
\]
It is a very well established result, see for instance \cite{Gr:2011}, that $\delta\in (1/2,1]$ and it attains to be $1$  for convex polygons in $\mathbb{R}^2$.

\begin{corollary}\label{cor:a.5}
Under the same assumptions as those taken in Theorem \ref{theo:A02} and Lemma \ref{lemma:Nitsche-Trick}, we have that for any $\varepsilon\in [0,1/2)$
\[ 
   \|u-u_h\|_{H^{1+\varepsilon}(D')}\ \le\   C \big( (h^{\delta} h^{-\varepsilon}_D +h_D ^{1-\varepsilon} ) \|u-u_h\|_{H^{1}(\Omega)}
   +
    h^{-\varepsilon}_D \|g_\Sigma-g_\Sigma^h\|_{L^2(\Sigma)}
     +  h_{D}^{r-\varepsilon} \|u\|_{H^{r+1}(D)}\big) . 
\]
In particular, if $h^{\delta} h^{-\varepsilon}_D\to 0$ and $g_\Sigma^h$ is taken satisfying
\[
h^{-\varepsilon}_D \|g_\Sigma-g_\Sigma^h\|_{L^2(\Sigma)}\le C(h)\|g_\Sigma-g_\Sigma^h\|_{H^{1/2}(\Sigma)}
\]
with $C(h)\to 0$ as $h\to 0$, there exists $\eta(h)$ with $\eta(h)\to 0$ as $h\to 0$  such that
\begin{equation}\label{eq:01}
    \|u-u_h\|_{H^{1+\varepsilon}(D')}\le \eta(h)\big( \inf_{v_h\in \mathbb{P}_{h,d}}\|u-v_h\|_{H^{1}(\Omega)}
   +\|g_\Sigma-g_\Sigma^h\|_{H^{1/2}(\Sigma)}\big).
\end{equation}
\end{corollary}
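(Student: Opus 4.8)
The plan is to read the statement as the synthesis of the three error estimates already assembled in this appendix: the interior higher-norm bound of Theorem~\ref{theo:A02}, the global $L^2$ estimate of Lemma~\ref{lemma:Nitsche-Trick}, and the standard $H^1$ quasi-optimality recalled at the start of the appendix. Essentially no new analytic work is needed; the corollary is obtained by chaining these bounds and then discarding the factors that are forced to vanish under the stated hypotheses. I would fix a cut-off pair $D'\subset\subset D$ as in Theorem~\ref{theo:A02} and, since $D\subset\Omega\setminus\Omega_0$ lies away from the support of $1-n$, use that $u$ solves the homogeneous constant-coefficient Helmholtz equation there and is therefore analytic on $D$; this lets me take $r=d$ (or any admissible $r\in[\varepsilon,d]$) so that $\|u\|_{H^{r+1}(D)}$ is finite.

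First I would establish the explicit bound. Starting from \eqref{eq:01:theo:A02}, I replace the two interior error norms by global ones through the trivial monotonicity $\|u-u_h\|_{H^s(D)}\le\|u-u_h\|_{H^s(\Omega)}$, and I insert Lemma~\ref{lemma:Nitsche-Trick} into the $L^2$ term:
\[
h_D^{-\varepsilon}\|u-u_h\|_{H^0(D)}\le C\,h_D^{-\varepsilon}\bigl(h^{\delta}\|u-u_h\|_{H^1(\Omega)}+\|g_\Sigma-g_\Sigma^h\|_{L^2(\Sigma)}\bigr).
\]
Collecting the resulting coefficients of $\|u-u_h\|_{H^1(\Omega)}$ into $(h^{\delta}h_D^{-\varepsilon}+h_D^{1-\varepsilon})$ reproduces the first displayed inequality verbatim; this part is pure bookkeeping.

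For the ``in particular'' estimate \eqref{eq:01} I would feed the standard quasi-optimal bound $\|u-u_h\|_{H^1(\Omega)}\le C\bigl(\inf_{v_h}\|u-v_h\|_{H^1(\Omega)}+\|g_\Sigma-g_\Sigma^h\|_{H^{1/2}(\Sigma)}\bigr)$ into the first inequality. The coefficient $(h^{\delta}h_D^{-\varepsilon}+h_D^{1-\varepsilon})$ tends to zero by the hypothesis $h^{\delta}h_D^{-\varepsilon}\to0$ together with $h_D^{1-\varepsilon}\to0$ (recall $\varepsilon<1/2$), and the boundary term is handled by the assumed comparison $h_D^{-\varepsilon}\|g_\Sigma-g_\Sigma^h\|_{L^2(\Sigma)}\le C(h)\|g_\Sigma-g_\Sigma^h\|_{H^{1/2}(\Sigma)}$ with $C(h)\to0$; both are thus subordinate to the right-hand side of \eqref{eq:01} with vanishing prefactors.

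The main obstacle is the residual interior term $h_D^{r-\varepsilon}\|u\|_{H^{r+1}(D)}$, which carries neither a $g_\Sigma-g_\Sigma^h$ nor a best-approximation factor and so is not immediately of the form $\eta(h)(\,\cdots)$. Here I would exploit the analyticity of $u$ on $D$: interior (Caccioppoli-type) elliptic regularity, exactly as in \eqref{eq:4.13}, bounds $\|u\|_{H^{r+1}(D)}$ by a norm of $u$ on a slightly larger subdomain, so this quantity is fixed and independent of $h$, while the geometric factor $h_D^{r-\varepsilon}\to0$ since $r>\varepsilon$. Absorbing this vanishing contribution into the envelope $\eta(h)$—taken as the maximum of the three prefactors just produced—yields \eqref{eq:01}. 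I expect the delicate accounting to be precisely the justification that this last, solution-dependent term is dominated by the global best-approximation error, which is where the contrast between the local smoothness of $u$ on $D$ and the limited global regularity near $\Omega_0$ is essential.
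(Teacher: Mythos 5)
The paper states this corollary without a written proof, so the benchmark is simply whether your chaining of the three ingredients is airtight. For the first displayed estimate it is: inserting Lemma~\ref{lemma:Nitsche-Trick} into the $h_D^{-\varepsilon}\|u-u_h\|_{H^0(D)}$ term of \eqref{eq:01:theo:A02}, enlarging $D$ to $\Omega$ in the error norms, and collecting the coefficients of $\|u-u_h\|_{H^1(\Omega)}$ is exactly the intended (and essentially unique) route. Note that this first estimate is the only one actually used in the body of the paper: it reappears as \eqref{eq:FEMSuperconvergence} with $r=d$, where the interior term $h_D^{d-\varepsilon}\|u\|_{H^{d+1}(D)}$ is deliberately carried along as a \emph{separate additive contribution} and later bounded via \eqref{eq:4.13} by $\|f_\Sigma\|_{L^2(\Sigma)}$, not folded into a best-approximation factor.

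The gap is in your treatment of \eqref{eq:01}. You correctly isolate the problematic term $h_D^{r-\varepsilon}\|u\|_{H^{r+1}(D)}$, but ``absorbing this vanishing contribution into the envelope $\eta(h)$'' is not a valid step: an additive quantity that merely tends to zero cannot be rewritten as $\eta(h)$ times the right-hand side of \eqref{eq:01} with $\eta(h)\to 0$ unless one also knows that $h_D^{r-\varepsilon}\|u\|_{H^{r+1}(D)} = o\bigl(\inf_{v_h}\|u-v_h\|_{H^1(\Omega)}+\|g_\Sigma-g_\Sigma^h\|_{H^{1/2}(\Sigma)}\bigr)$, i.e.\ a lower bound (saturation-type assumption) on the best-approximation error relative to the interior term. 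That ratio condition is not among the hypotheses you invoke, and it can fail: for a globally smooth $u$ on a quasi-uniform mesh the best-approximation error is itself $O(h^{d})$ while the interior term is only $O(h_D^{d-\varepsilon})$, so the quotient behaves like $h^{-\varepsilon}$ and does not vanish. Interior analyticity of $u$ on $D$ makes $\|u\|_{H^{r+1}(D)}$ finite and the term small in absolute size, but it does nothing to compare it with the global best-approximation error, which is the comparison \eqref{eq:01} requires. To close the argument you would need either to add such a saturation assumption explicitly or to state the conclusion with the interior term kept as a separate additive remainder --- which is precisely how the paper deploys the estimate in Theorem~\ref{theo:5.2} and Lemma~\ref{lemma:5.4}.
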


\end{document}